\newtheorem{theorem}{Theorem}[section]
\newtheorem{lemma}[theorem]{Lemma}
\newtheorem{corollary}[theorem]{Corollary}
\theoremstyle{definition}
\theoremstyle{remark}
\newtheorem{remark}[theorem]{Remark}
\numberwithin{equation}{section}
\numberwithin{theorem}{section}
\numberwithin{equation}{section}
\DeclareMathOperator{\curl}{curl}
\DeclareMathOperator{\tr}{tr}
\newcommand{\Th}{\mathcal{T}_{h}}
\newcommand{\CV}{\mathcal{V}}
\newcommand{\ab}[2]{\langle#1,#2\rangle}
\newcommand{\db}[1]{\{\!\!\{#1\}\!\!\}}
\newcommand{\lr}[1]{\llbracket#1\rrbracket}
\newcommand{\vertiii}[1]{|\!|\!|#1|\!|\!|}
\DeclareFontFamily{U}{mathx}{\hyphenchar\font45}
\DeclareFontShape{U}{mathx}{m}{n}{
      <5> <6> <7> <8> <9> <10>
      <10.95> <12> <14.4> <17.28> <20.74> <24.88>
      mathx10
      }{}
\DeclareSymbolFont{mathx}{U}{mathx}{m}{n}
\DeclareMathAccent{\widecheck}{0}{mathx}{"71}
\title{An Extended Galerkin analysis in finite element exterior calculus}
\begin{document}
\bibliographystyle{amsplain}
    

\author{Qingguo Hong, Yuwen Li, Jinchao Xu}
\address{Department of Mathematics, The Pennsylvania State University, University Park, PA 16802.}
\email{huq11@psu.edu,yuwenli925@gmail.com,jxx1@psu.edu}
\thanks{Y.~Li is the corresponding author}
\thanks{
J.~Xu was supported by Center for Computational Mathematics and Applications, and the Verne M. William Professorship Fund from 
the Pennsylvania State University}


\subjclass[2010]{Primary 65N12, 65N15, 65N30}
\date{\today}
\begin{abstract}
For the Hodge--Laplace equation in finite element exterior calculus, we introduce several families of discontinuous Galerkin methods in the extended Galerkin framework. For contractible domains, this framework utilizes seven fields and provides a unifying inf-sup analysis with respect to all discretization and penalty parameters. It is shown that the proposed methods can be hybridized as a reduced two-field formulation.
\end{abstract}
\maketitle
\markboth{Q.~Hong,\ Y.~Li,\ J.~Xu}{Extended Galerkin Analysis in FEEC}


\section{Introduction}
Finite element exterior calculus (FEEC) is a powerful and elegant framework unifying numerical analysis on the de Rham complex, or more generally,  closed Hilbert complexes. 
In recent decades, theory of FEEC has been under intensive investigation and applied to a variety of problems such as vector Laplacian \cite{ArnoldFalkWinther2006}, Stokes \cite{FalkNeilan2013,ChristiansenHu2018}, Maxwell \cite{Hiptmair2002}, and elasticity equations \cite{ArnoldFalkWinther2007,ArnoldAwanouWinther2008}. For a thorough introduction to FEEC, readers are referred to  \cite{ArnoldFalkWinther2006,ArnoldFalkWinther2010,Arnold2018} and references therein. As a model problem in FEEC, the continuous Galerkin/Arnold--Falk--Winther (AFW) method for the following Hodge--Laplace equation (without harmonic forms)
\begin{equation}\label{HodgeLaplaceSimple}
    (d\delta+\delta d)u=f
\end{equation}
has been discussed in many aspects, see, e.g., \cite{ArnoldFalkWinther2006,ChristiansenWinther2008,FalkWinther2014,Licht2019} for  commuting projections and a priori error estimates, \cite{DemlowHirani2014,Demlow2017,ChenWu2017,Li2019SINUM,Li2020MCOM} for a posteriori error estimates and adaptive algorithms, \cite{GilletteHolstZhu2017,ArnoldChen2017} for time-dependent problems, and \cite{ArnoldAwanou2014,GilletteKloefkorn2019,DiPietroDroniouRapetti2020} for FEEC on cubical and polyhedral meshes.

The discontinuous
Galerkin (DG) methods can be traced back to the late 1960s
\cite{Lions1968, aubin1970approximation}. DG methods have achieved great success in purely convective or convection-dominated problems in fluids, see, e.g., \cite{LasaintRaviart1974,ReedHill1973,CockburnShu1989,cockburn2000development} and references therein. In recent decades, DG methods have also been applied to purely
elliptic problems, see, e.g., interior penalty methods \cite{douglas1976interior,babuvska1973nonconforming, wheeler1978elliptic,
arnold1982interior}, enriched Galerkin methods \cite{BeckerBurmanHansboLarson2003,SunLiu2009}, local DG methods \cite{cockburn1998local}, and hybridized DG methods \cite{CarreroCockburnSchotzau2006,CockburnGopalakrishnanLazarov2009}. A unified DG analysis for elliptic problems could be found in \cite{ArnoldBrezziCockburnMarini2001}. To achieve
consistency and stability, DG methods often utilize additional finite element spaces on the boundaries of elements. In most existing DG methods, only one boundary discrete space (Lagrangian multiplier
space) is introduced, cf.~ \cite{brezzi2000discontinuous,hong2017unified}. Recently, an extended Galerkin (XG) framework for Poisson's equation was presented in \cite{HongWuXu2021}. This framework makes use of two auxiliary finite element spaces (check spaces) on element boundaries and unifies several classes of popular DG methods in the literature. The XG method was further generalized to linear elasticity equations in \cite{HongHuMaXu2020}.

To the best of our knowledge, a detailed analysis of DG methods in FEEC is still \emph{missing}. Very recently, the work  \cite{AwanouFabienGuzmanStern2020} derived hybridization and postprocessing technique for the AFW mixed method in FEEC and discussed possible extensions to hybridized DG methods. In addition, there are several relevant works on non-standard discretizations of the Maxwell equations, see, e.g., DG methods \cite{HoustonPerugiaSchtzau2004,HoustonPerugiaSchtzau2005,NguyenPeraireCockburn2011,li2013hybridizable}, weak Galerkin methods \cite{MuWangYeZhang2015}, and virtual
  element methods \cite{da2017virtual}. Comparing to Maxwell equations, the DG discretization of the mixed Hodge Laplacian is expected to involve more unknown variables, numerical fluxes and stabilization terms.

In this work, we present several DG methods for the Hodge--Laplace equation in a unified XG framework, which employs three unknowns approximating $u,$ $du$, $\delta u$ as well as four ``check'' variables defined on element boundaries. Similarly to the XG method for Poisson's equation \cite{HongWuXu2021}, the seven-field XG formulation is motivated by the so called Nitsche’s trick \cite{Nitsche1971,Nitsche1972}. In fact, the solution $u$ of the Hodge Laplace equation  \eqref{HodgeLaplaceSimple} satisfies two types of inter-element continuity conditions based on $d$ and $\delta$. In the existing literature,  the classical AFW mixed method essentially utilizes one of them. Working in totally discontinuous Galerkin subspaces, the XG method penalizes the inter-element continuity w.r.t. both $d$ and $\delta$ by check variables. Therefore, the proposed XG method stays in an intermediate stage between AFW conforming methods. In addition, it is shown that the solution of our XG method converges to the classical AFW mixed method as penalty parameters approach suitable limits.

By eliminating one or more fields, the seven-field XG formulation could be further reduced to a variety of more compact DG methods. In the extreme case, we hybridize the XG method to obtain a condensed system with only two unknown numerical fluxes. It is well known that the Hodge Laplacian covers the Poisson's equation and vector Laplacian as special cases. Correspondingly, the proposed seven-field XG method reduces to the original four-field XG method in \cite{HongWuXu2021} for the Poisson equation in primal and mixed forms. It also leads to several  new families of DG methods for the vector Laplacian.

The XG framework also facilitates a transparent convergence analysis for DG discretizations of the Hodge--Laplace equation. In particular, we derive the quasi-optimal error estimate of the XG method through constructing a discrete inf-sup condition uniform w.r.t.~all discretization and penalty parameters. In doing so, several fields on element boundaries are eliminated and the remaining variables are carefully grouped to obtain a  standard perturbed saddle point system \eqref{XGreduced}. It turns out that the resulting saddle system could be analyzed block-wise using the classical Babuska--Brezzi inf-sup theory, cf.~\cite{BoffiBrezziFortin2013}.

The rest of this paper is organized as follows. In Section \ref{SectionPreliminaries}, we introduce notation and preliminaries in FEEC. In Section \ref{SectionXGformulation}, we derive the XG formulations for the Hodge--Laplace equation. Section \ref{SectionInfsup} is then devoted to the analysis of continuous and discrete inf-sup conditions. In Section \ref{SectionConnection}, we discuss the
relationship between XG methods and the classical mixed methods. In Section \ref{SectionHybridization}, a hybridized version of the proposed XG method is proposed. Finally, we restate XG methods for the Hodge--Laplace equation in the language of vector calculus in Section \ref{SectionExamples}.

\section{Preliminaries}\label{SectionPreliminaries}
In this section, we follow the convention in \cite{ArnoldFalkWinther2006,ArnoldFalkWinther2010} to present minimal  preliminaries in FEEC for developing XG methods with differential forms.
\subsection{De Rham complex}
Let $\Omega\subset\mathbb{R}^{n}$ be a bounded Lipschitz polyhedron. For simplicity of presentation, throughout the rest of this paper, we assume that the cohomology of $\Omega$ is trivial,  e.g., $\Omega$ is contractible. 

\subsubsection{Continuous de Rham Complex}
Given an integer $k\in\{0,1,\ldots,n\}$, let $S^n_k$ denote the set of increasing multi-indices in the form $\alpha=(\alpha_1,\ldots,\alpha_k)\in\mathbb{N}^k$ with $1\leq\alpha_1<\cdots<\alpha_k\leq n$. Let $\Lambda^{k}(\Omega)$ be the space of smooth differential $k$-forms  written as
\begin{equation*}
    \omega=\sum_{\alpha\in S_k^n}\omega_\alpha dx^{\alpha_{1}}\wedge\cdots\wedge dx^{\alpha_{k}},
\end{equation*}
where each coefficient $\omega_{\alpha}\in C^{\infty}(\overline{\Omega})$, and $\wedge$ is the wedge product. The space $\Lambda^{k}(\Omega)$ is naturally endowed with the $L^{2}$-inner product $(\cdot,\cdot)$ given as
\begin{equation*}
    (\omega,\eta):=\sum_{1\leq\alpha_{1}<\cdots<\alpha_{k}\leq n}\int_\Omega\omega_\alpha\eta_\alpha dx,\quad\omega,\eta\in\Lambda^k(\Omega).
\end{equation*}
The induced $L^2$-norm is $\|\cdot\|$. We further denote the $L^2$-inner product and $L^2$-norm on a Lipschitz submanifold $U\subseteq\Omega$ as $(\cdot,\cdot)_U$ and $\|\cdot\|_U$, respectively.
Let $d^k: \Lambda^{k}(\Omega)\rightarrow\Lambda^{k+1}(\Omega)$ denote the exterior derivative for differential forms, that is,
\begin{equation*}
    d^k\omega=\sum_{\alpha\in S_k^n}\sum_{i=1}^n\frac{\partial\omega_\alpha}{\partial x_i}dx^i\wedge dx^{\alpha_{1}}\wedge\cdots\wedge dx^{\alpha_{k}}.
\end{equation*}
The space $L^{2}\Lambda^{k}(\Omega)$ is the completion of $\Lambda^{k}(\Omega)$ w.r.t.~the norm $\|\cdot\|$, which is simply the space of $k$-forms with $L^2(\Omega)$ coefficients. 
The derivative $d^k$ can be extended as a weak derivative $d^k: L^2\Lambda^{k}(\Omega)\rightarrow L^2\Lambda^{k+1}(\Omega)$, which is a densely defined unbounded operator with domain 
\begin{equation*}
    H\Lambda^{k}(\Omega):=\big\{\omega\in L^{2}\Lambda^{k}(\Omega): d^k\omega\in L^{2}\Lambda^{k+1}(\Omega)\big\}.
\end{equation*}
We shall also make use of $H^{s}\Lambda^{k}(\Omega)$, the Sobolev space of differential $k$-forms whose coefficients are in $H^{s}(\Omega)$. Due to $d^k\circ d^{k-1}=0$, the following sequence 
\begin{equation}\label{deRham}
\begin{CD}
H\Lambda^0(\Omega)@>d^0>>H\Lambda^1(\Omega)@>d^1>>\cdots@>d^{n-2}>>H\Lambda^{n-1}(\Omega)@>d^{n-1}>> H\Lambda^{n}(\Omega)
\end{CD}
\end{equation}
is called the de Rham complex.
Let $\mathfrak{Z}^k:=N(d^k)$ be the kernel of $d^k$,  $\mathfrak{B}^k:=R(d^{k-1})$ be the range of $d^{k-1}$, and $\mathfrak{H}^{k}:=\mathfrak{Z}^k\cap\mathfrak{B}^{k\perp}$ be the space of harmonic $k$-forms. Here $\perp$ is the operation of taking $L^2$-orthogonal complement in $H\Lambda^k(\Omega).$  Due to the assumption on $\Omega,$ we have 
\begin{equation}
\mathfrak{H}^k=
\left\{\begin{aligned}
    \mathbb{R},\quad k=0,\\
    \{0\},\quad k\geq1,
\end{aligned}\right.
\end{equation}
and thus $\mathfrak{B}^k=\mathfrak{Z}^k$ if $k\geq1.$ It then follows that
\begin{equation}\label{Helm}
  H\Lambda^k(\Omega)=\mathfrak{Z}^k\oplus\mathfrak{Z}^{k\perp}=\mathfrak{B}^k\oplus\mathfrak{Z}^{k\perp},\quad k\geq1.
\end{equation}
In addition, there exists an absolute constant $c_P>0$ relying on $\Omega$ such that 
\begin{equation}\label{Poincare}
    \|v\|\leq c_P\|d^kv\|,\quad\forall v\in\mathfrak{Z}^{k\perp}.
\end{equation}

For each index $k$, the Hodge star $\star: L^2\Lambda^k(\Omega)\rightarrow L^2\Lambda^{n-k}(\Omega)$ is a linear isomorphism determined by the equation
\begin{equation*}
    \int_{\Omega}\omega\wedge\mu=(\star\omega,\mu),\quad\forall\mu\in L^2\Lambda^{n-k}(\Omega).
\end{equation*}
It is well-known that for $\omega, \mu\in L^2\Lambda^k(\Omega)$,
\begin{subequations}\label{star0}
    \begin{align}
    &\star\star\omega=(-1)^{k(n-k)}\omega,\label{star1}\\
    &(\star\omega,\star\mu)=(\omega,\mu).\label{star2}
    \end{align}
\end{subequations}
With the help of $\star,$ the coderivative $\delta^k: H^1\Lambda^k(\Omega)\rightarrow L^2\Lambda^{k-1}(\Omega)$ is then defined as
\begin{equation}\label{delta}
  \star\delta^k\omega=(-1)^kd^{n-k}\star\omega.
\end{equation}
Given Lipschitz manifolds $M\subseteq\Omega_0\subseteq\Omega$, let $i_{M,\Omega_0}: M\rightarrow\Omega_0$ be the inclusion, and $\tr_{M,\Omega_0}=i_{M,\Omega_0}^{*}$ be the pullback for differential forms based on $i_{M,\Omega_0}$, i.e., the trace operator on $M$. We denote $\tr_{M}=\tr_{M,\Omega}$ and may suppress the subscript $M$ provided relevant domains are  clear from the context.  
It is well known that $d^k$ and $\delta^{k+1}$ are related by the Stokes' formula
\begin{equation}\label{IntegrationByParts}
(d^k\omega,\mu)_U=(\omega,\delta^{k+1}\mu)_U+\int_{\partial U}\tr_{\partial U,U}(\omega\wedge\star\mu),
\end{equation}
where $\omega\in H^1\Lambda^{k}(U),~\mu\in H^1\Lambda^{k+1}(U)$, and $\partial U$ is set to be \emph{outward orientated}.
The Sobolev space of $k$-forms with essential boundary condition is 
\begin{equation*}
    \mathring{H}\Lambda^{k}(\Omega):=\big\{\omega\in H\Lambda^{k}(\Omega): \tr_{\partial\Omega}\omega=0\big\}.
\end{equation*}
Such spaces form a de Rham complex with essential boundary condition
\begin{equation}\label{deRhamessential}
\begin{CD}
\mathring{H}\Lambda^0(\Omega)@>d^0>>\mathring{H}\Lambda^1(\Omega)@>d^1>>\cdots@>d^{n-2}>>\mathring{H}\Lambda^{n-1}(\Omega)@>d^{n-1}>> \mathring{H}\Lambda^{n}(\Omega)
\end{CD}
\end{equation}

The coderivative $\delta^k$ could be extended as a closely defined and unbounded operator $\delta^{k}: L^2\Lambda^{k}(\Omega)\rightarrow L^2\Lambda^{k-1}(\Omega)$ with domain
\begin{align*}&\mathring{H}^{*}\Lambda^{k}(\Omega):=\star\mathring{H}\Lambda^{n-k}(\Omega)\\
  &=\{\omega\in L^{2}\Lambda^{k}(\Omega): \delta^k\omega\in L^{2}\Lambda^{k-1}(\Omega),\ \tr_{\partial\Omega}\star
\omega=0\}.
\end{align*}
As a result of $\delta^k\circ\delta^{k+1}=0$, we have the following exact sequence 
\begin{equation}\label{dualdeRham}
\begin{CD}
\mathring{H}^*\Lambda^0(\Omega)@<\delta^1<<\mathring{H}^*\Lambda^1(\Omega)@<\delta^2<<\cdots@<\delta^{n-1}<<\mathring{H}^*\Lambda^{n-1}(\Omega)@<{\delta^n}<< \mathring{H}^*\Lambda^{n}(\Omega),
\end{CD}
\end{equation}
which is 
the dual complex of \eqref{deRham}.

\subsubsection{Discrete de Rham Complex}
In the following, we present the discrete analogue of the de Rham complex \eqref{deRham}. Let $\Th$ be a conforming partition of $\Omega$, which is shape-regular in the sense that
\begin{equation*}
    \max_{T\in\mathcal{T}_h}(r_T/\rho_T):=C_{\text{mesh}}<\infty,
\end{equation*}
where $r_T$, $\rho_T$ are radii of circumscribed and inscribed spheres of $T$, and  $C_{\text{mesh}}$ is an absolute constant.
Given a Lipschitz manifold $U$ and an integer $r\geq0$, define \begin{equation*}
    \mathcal{P}_r\Lambda^k(U):=\{\omega\in L^2\Lambda^k(U): \omega_\alpha\in\mathcal{P}_r(U)\text{ for all }\alpha\in S^n_k\},
\end{equation*}
which is the space of $k$-forms on $U$ with polynomial coefficients of degree at most $r$.
Let
$\kappa: L^2\Lambda^{k+1}(U)\rightarrow L^2\Lambda^k(U)$ denote the interior product for differential forms.
We shall also make use of the space of $k$-forms on $U$ with incomplete polynomial coefficients 
\begin{equation*}
    \mathcal{P}^-_r\Lambda^k(U):=\mathcal{P}_{r-1}\Lambda^k(U)+\kappa\big(\mathcal{P}_{r-1}\Lambda^{k+1}(U)\big).
\end{equation*}
There exist two families of conforming finite element subspaces of $H\Lambda^k(\Omega)$:
\begin{align*}
    &\mathcal{P}_r\Lambda^k(\Th):=H\Lambda^k(\Omega)\cap\prod_{T\in\Th}\mathcal{P}_r\Lambda^k(T),\\
    &\mathcal{P}^-_r\Lambda^k(\Th):=H\Lambda^k(\Omega)\cap\prod_{T\in\Th}\mathcal{P}^-_r\Lambda^k(T).
\end{align*}
Under essential boundary conditions, we define
\begin{align*}
    &\mathring{\mathcal{P}}_r\Lambda^k(\Th):=\mathring{H}\Lambda^k(\Omega)\cap\mathcal{P}_r\Lambda^k(\mathcal{T}_h),\\
    &\mathring{\mathcal{P}}^-_r\Lambda^k(\Th):=\mathring{H}\Lambda^k(\Omega)\cap\mathcal{P}^-_r\Lambda^k(\mathcal{T}_h).
\end{align*}
For each index $k$, we may choose  $V^k_h=\mathcal{P}_{r_k}\Lambda^k(\Th)$ or $\mathcal{P}^-_{r_k}\Lambda^k(\Th)$. In addition, $\{V_h^k\}_{k=0}^n$ is assumed to form a discrete de Rham complex 
\begin{equation}\label{discretedeRham}
\begin{CD}
V_h^0@>d^0>>V_h^1@>d^1>>\cdots@>d^{n-2}>>V_h^{n-1}@>d^{n-1}>> V_h^{n},
\end{CD}
\end{equation}
which could be connected with \eqref{deRham} via commuting interpolations, see \eqref{FEpairs} and \cite{ArnoldFalkWinther2006,ArnoldFalkWinther2010} for more details. In the discrete level, \begin{equation*}
    \mathring{V}_h^k:=\mathring{H}\Lambda^k(\Omega)\cap V_h^k,\quad0\leq k\leq n
\end{equation*}
form a discrete complex with essential boundary conditions.
Similarly to the continuous case, let $\mathfrak{Z}_h^k:=N(d^k|_{V_h^k})$,  $\mathfrak{B}_h^k:=d^{k-1}(V_h^{k-1})$, and  $\mathfrak{H}_h^{k}:=\mathfrak{Z}_h^k\cap\mathfrak{B}_h^{k\perp}$. The space of discrete harmonic $k$-forms $\mathfrak{H}_h^{k}$ is isomorphic to $\mathfrak{H}^k$. In particular,
\begin{equation}
\mathfrak{H}_h^k=
\left\{\begin{aligned}
    \mathbb{R},\quad k=0,\\
    \{0\},\quad k\geq1.
\end{aligned}\right.
\end{equation}
For $k\geq1,$
it is straightforward to obtain
\begin{equation}\label{discreteHodge}
  V_h^k=\mathfrak{B}_h^k\oplus\mathfrak{Z}_h^{k\perp}=d^{k-1}V_h^{k-1}\oplus\mathfrak{Z}_h^{k\perp},
\end{equation}
i.e., the discrete Hodge decomposition. In addition, there exists an absolute constant $c_{dP}>0$ dependent on $\Omega$ and $C_{\text{mesh}}$ such that 
\begin{equation}\label{discretePoincare}
    \|v_h\|\leq c_{dP}\|d^kv_h\|,\quad\forall v_h\in\mathfrak{Z}_h^{k\perp}.
\end{equation}
In the literature, \eqref{discretePoincare} is known as the discrete Poincar\'e inequality and the proof could be found in e.g.,  \cite{ArnoldFalkWinther2006}.

%
Throughout the rest of this paper, we may suppress the super-index of $\delta^k$, $d^k$, $L^2\Lambda^k(\Omega)$,  $H\Lambda^k(\Omega)$ etc. for a fixed $k$ and 
adopt the notation
\begin{align*}
    &d=d^k,\quad \delta=\delta^k,\quad d^-=d^{k-1},\quad\delta^+=\delta^{k+1},\\
    &V^-=H\Lambda^{k-1}(\Omega),\quad V=H\Lambda^{k}(\Omega),\quad W^+=L^2\Lambda^{k+1}(\Omega).
\end{align*}
Similar notation will be used in the discrete level. The $V^-$- and $V$-norms are 
\begin{align*}
    &\|\tau\|_{V^-}^2=\|\tau\|^2+\|d^-\tau\|^2,\quad \tau\in V^-,\\
    &\|v\|_{V}^2=\|v\|^2+\|dv\|^2,\quad v\in V.
\end{align*}

\subsection{Hodge Laplacian and approximation}
Given an index $k\geq1$ and data $f\in L^2\Lambda^k(\Omega)$,
we consider the the Hodge--Laplace equation
\begin{equation}\label{HodgeLaplace}
    (d^-\delta+\delta^+ d)u=f.
\end{equation}
The solution $u$ is contained in the space $H\Lambda^{k}(\Omega)\cap \mathring{H}^*\Lambda^k(\Omega),$ which is the intersection of domains of $d$ and $\delta$. Therefore the primal variational formulation for \eqref{HodgeLaplace} is to find $u\in H\Lambda^{k}(\Omega)\cap \mathring{H}^*\Lambda^k(\Omega)$ such that
\begin{equation}\label{varHL}
  (du,dv)+(\delta  u,\delta v)=(f,v),\quad\forall v\in H\Lambda^{k}(\Omega)\cap \mathring{H}^*\Lambda^k(\Omega).
\end{equation}
Although \eqref{varHL} is well-posed, a practical finite element subspace of $H\Lambda^{k}(\Omega)\cap \mathring{H}^*\Lambda^k(\Omega)$ is not available yet. To remedy this situation, Arnold, Falk, and Winther \cite{ArnoldFalkWinther2010} considered the mixed formulation of \eqref{HodgeLaplace}: Find $(\sigma,u)\in V^-\times V$, such that
\begin{subequations}\label{mixedHodgeLaplace0}
\begin{align}
(\sigma,\tau)+(d^-\tau,u)&=0,\quad&&\tau\in V^-,\label{mHL1}\\
(d^-\sigma,v)-(du,dv)&=-(f,v),&& v\in V.
\end{align}
\end{subequations}
From \eqref{mHL1},
it could be observed that $\sigma=-\delta u$. The well-posedness of the variational formulation \eqref{mixedHodgeLaplace0} was verified in \cite{ArnoldFalkWinther2006}.

To discretize the Hodge--Laplace equation \eqref{mixedHodgeLaplace0}, the finite element pair $V_h^-\times V_h=V_h^{k-1}\times V_h^k$
must be carefully chosen such that a discrete inf-sup stability is satisfied. In this paper, we choose the Arnold--Falk--Winther finite element space
\begin{equation}\label{FEpairs}
    V_h^-=\left\{\begin{aligned}
        &\mathcal{P}_{r+1}\Lambda^{k-1}(\Th),\\
        &\mathcal{P}^-_{r+1}\Lambda^{k-1}(\Th)
    \end{aligned}\right\},\quad V_h=\left\{\begin{aligned}
        &\mathcal{P}^-_{r+1}\Lambda^{k}(\Th),\\
        &\mathcal{P}_{r}\Lambda^{k}(\Th)
    \end{aligned}\right\}.
\end{equation}
The AFW mixed method  for  \eqref{mixedHodgeLaplace} is to find $(\sigma^c_{h},u^c_{h})\in V_{h}^-\times V_{h}$, such that
\begin{equation}\label{AFW}
\begin{aligned}
(\sigma^c_{h},\tau_h)+(d^-\tau_h,u^c_{h})&=0,&&\tau_h\in V_{h}^-,\\
(d^-\sigma^c_{h},v_h)-(du^c_{h},dv_h)&=-(f,v_h),&&v_h\in V_{h}.
\end{aligned}
\end{equation}
In \cite{ArnoldFalkWinther2006,ArnoldFalkWinther2010}, the discrete inf-sup condition and a priori error estimates of \eqref{AFW} were established.

\section{Extended Galerkin Formulation}\label{SectionXGformulation}
In this section, we derive an XG framework unifying several DG methods for the Hodge Laplacian \eqref{HodgeLaplace}. Those DG methods converge to the classical AFW mixed method \eqref{AFW} in limiting cases.

\subsection{DG Notation}
In a partition $\Th,$ let $\mathcal{E}_h$ denote the set of faces (of dimension $n-1$), and $\mathcal{E}_h^\partial\subset\mathcal{E}_h$ be the collection of faces that are contained in $\partial\Omega$. The set of interior faces is then $\mathcal{E}_h^o:=\mathcal{E}_h\backslash\mathcal{E}_h^\partial$. Given subsets $\mathcal{E}\subseteq\mathcal{E}_h$ and $\mathcal{T}\subseteq\Th,$ let
\begin{align*}
    &H^{1}\Lambda^{k}(\mathcal{T}):=\prod_{T\in\mathcal{T}}H^{1}\Lambda^{k}(T),\quad L^2\Lambda^k(\mathcal{E}):=\prod_{E\in\mathcal{E}}L^2\Lambda^k(E),\\
    &\tilde{\mathcal{P}}_r\Lambda^{k}(\mathcal{T}):=\prod_{T\in\mathcal{T}}\mathcal{P}_r\Lambda^{k}(T),\quad \mathcal{P}_r\Lambda^{k}(\mathcal{E}):=\prod_{E\in\mathcal{E}}\mathcal{P}_r\Lambda^{k}(E),\\
    &\tilde{\mathcal{P}}^-_r\Lambda^{k}(\mathcal{T}):=\prod_{T\in\mathcal{T}}\mathcal{P}^-_r\Lambda^{k}(T).
\end{align*}
Let $\langle\cdot,\cdot\rangle_E$ be the $L^2$-inner product of differential forms on $E$. We adopt the notation
\begin{equation*}
    \ab{\cdot}{\cdot}_\mathcal{E}=\sum_{E\in\mathcal{E}}\langle\cdot,\cdot\rangle_E,\quad\ab{\cdot}{\cdot}_{\partial\mathcal{T}}=\sum_{T\in\mathcal{T}}\langle\cdot,\cdot\rangle_{\partial T},\quad\ab{\cdot}{\cdot}=\ab{\cdot}{\cdot}_{\mathcal{E}_h}.
\end{equation*}
The $L^2$-norm based on $\langle\cdot,\cdot\rangle_{\mathcal{E}}$ is denoted as $\|\cdot\|_\mathcal{E}$. 
Each face $E\in\mathcal{E}_h$ is assigned with a unit norm $\nu_E$. Given an interior face $E\in\mathcal{E}_h^o,$ let $T_E^+$, $T_E^-$ denote elements in $\mathcal{T}_h$ sharing $E$ as a face such that $\nu_E$ is pointing out of $T_E^+$. In addition, $\nu_E$ is chosen to be outward and $T_E\in\mathcal{T}_h$ denotes the element containing $E$ if $E\subset\partial\Omega$ is a boundary face, see Figure \ref{elementpair}. 
\begin{figure}[ptb]
\centering
  \begin{tabular}[c]{cccc}%
  \subfigure[]{\includegraphics[width=5.5cm,height=5cm]{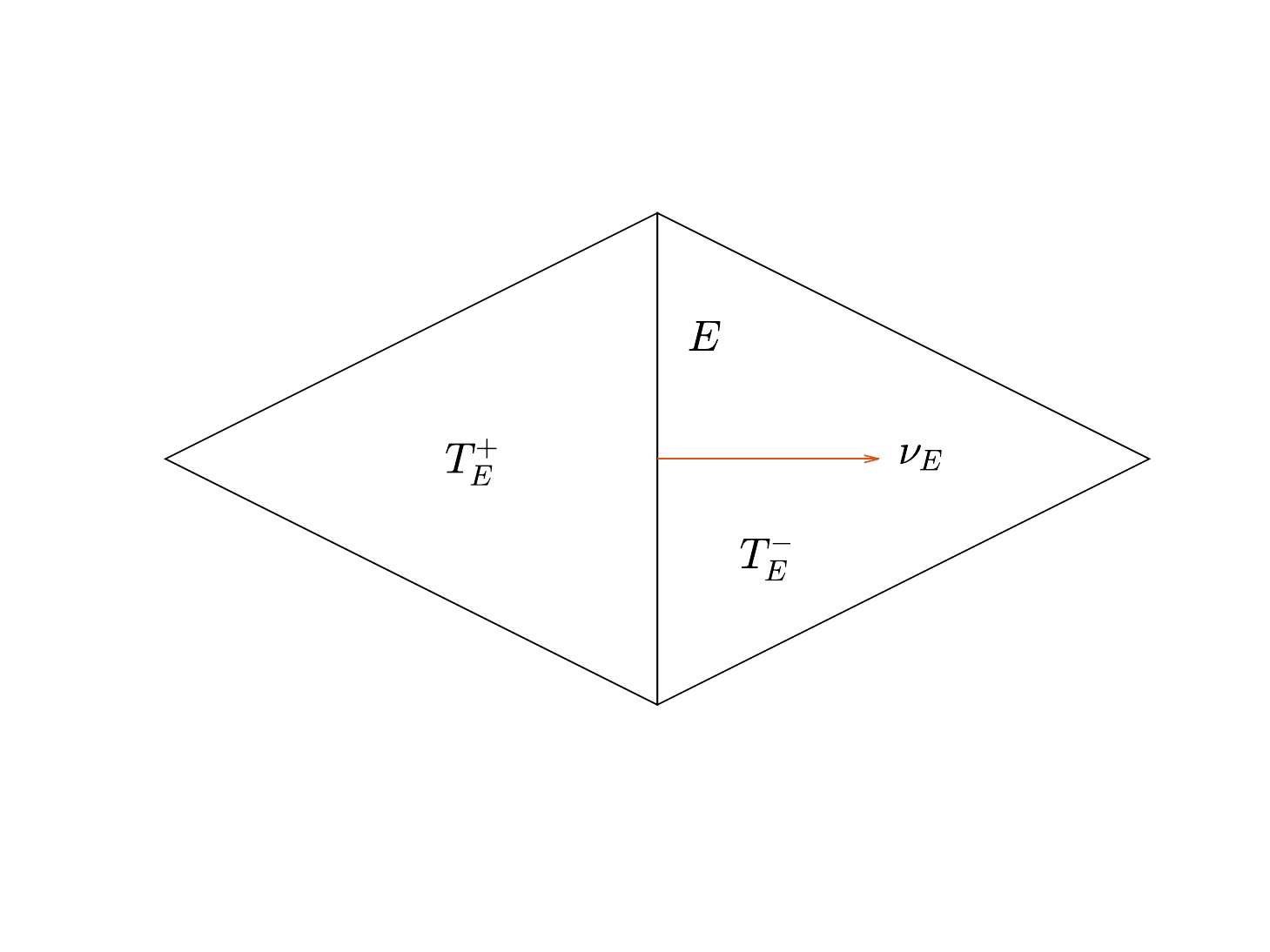}}
  \subfigure[]{\includegraphics[width=5cm,height=4.5cm]{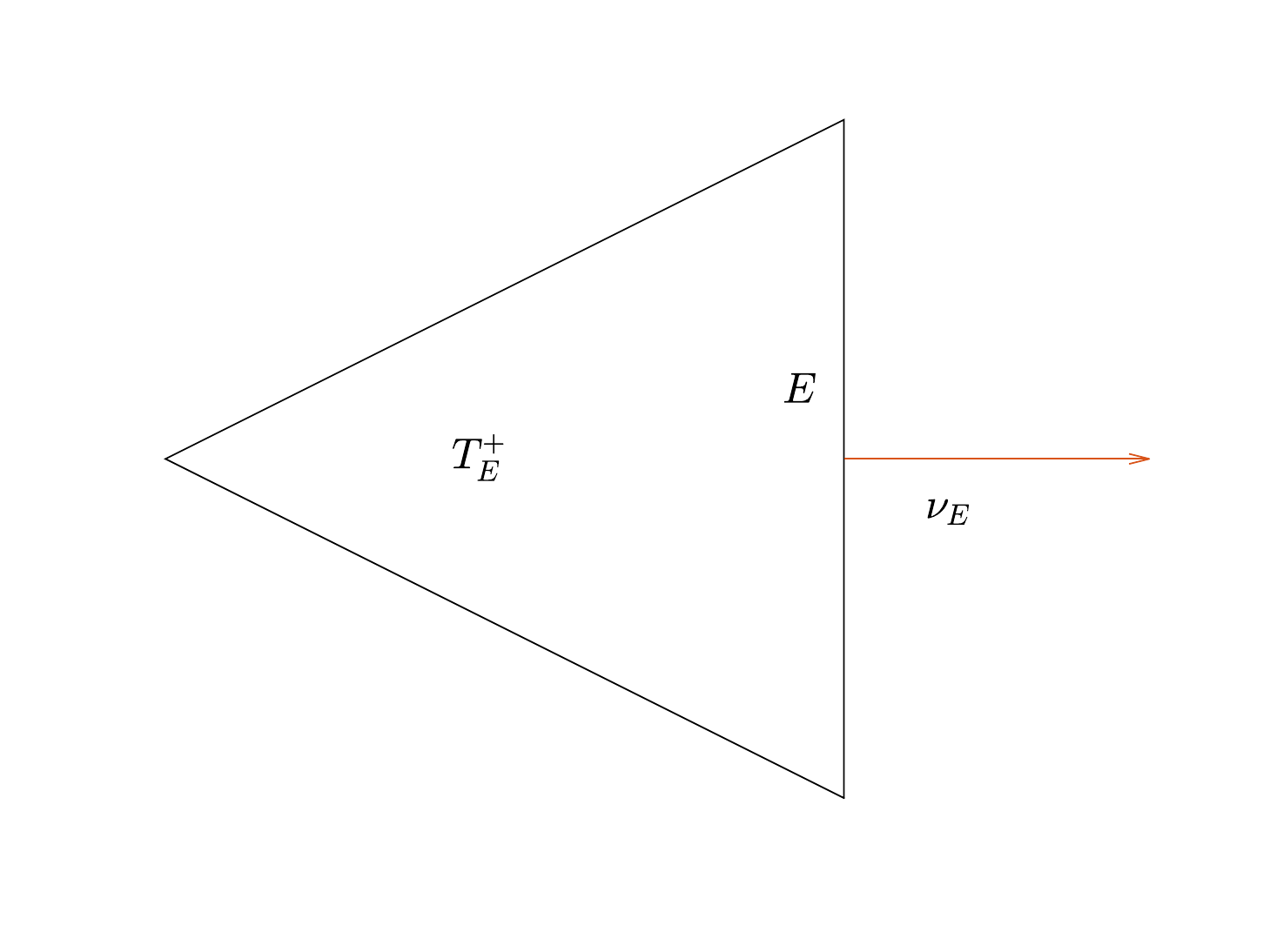}}
  \end{tabular}
  \caption{An interior element pair and a boundary element.}
  \label{elementpair}
\end{figure}
On a face $E\in\mathcal{E}_h$, the face Hodge star $\bar{\star}: L^2\Lambda^k(E)\rightarrow L^2\Lambda^{n-k}(E)$ is a linear isometry such that for $\eta_1, \eta_2\in L^2\Lambda^k(E)$
and $\eta_3\in L^2\Lambda^{n-k-1}(E)$, 
\begin{subequations}\label{barstar}
    \begin{align}
    &\ab{\bar{\star}\eta_1}{\bar{\star}\eta_2}_E=\ab{\eta_1}{\eta_2}_E,\label{barstar3}\\
    &\int_E\eta_1\wedge\eta_3=\ab{\bar{\star}\eta_1}{\eta_3}_E\label{barstar2}.
    \end{align}
\end{subequations}

Given $\omega\in H^{1}\Lambda^k(\Th)$ and $E\in\mathcal{E}_h$, we introduce the trace average $\db{\tr\omega}\in L^2\Lambda^k(\mathcal{E}_h)$ and trace jump $\lr{\tr\omega}\in L^2\Lambda^k(\mathcal{E}_h)$, which are widely used in the DG literature. In particular, for each interior edge $E\in\mathcal{E}_h^o$,
\begin{align*}
    &\db{\tr\omega}|_E:=\frac{1}{2}\left\{\tr_E(\omega|_{T_E^+})+\tr_E(\omega|_{T_E^-})\right\},\quad E\in\mathcal{E}_h^o,\\
    &\lr{\tr\omega}|_E:=\tr_E(\omega|_{T_E^+})-\tr_E(\omega|_{T_E^-}),\quad E\in\mathcal{E}_h^o.
\end{align*}
Furthermore, for each boundary edge $E\in\mathcal{E}_h^\partial$, we distinguish the trace averages/jumps of $\omega$ and $\star\omega$ as
\begin{align*}
    &\db{\tr\omega}|_E:=\tr_E(\omega|_{T_E}),\quad\lr{\tr\omega}|_E:=0,\\
    &\db{\tr\star\omega}|_E:=0,\quad\lr{\tr\star\omega}|_E:=\tr_E(\star\omega|_{T_E}).
\end{align*}
For each $T \in\mathcal{T}_h$ and $E\subset\partial T$, let $s_T^E=1$ if $\nu_E$ is outward to $\partial T$ and $s_T^E=-1$ otherwise. Let $s_T$ be the sign function on $\partial T$ with $s_T=s_T^E$ on $E\subset\partial T$. For $\eta_1, \eta_2\in L^2\Lambda^k(E)$, we define $L^2$-inner products on element boundaries
\begin{align*}
    &\langle\eta_1,\eta_2 \rangle^s_{\partial T}:=\langle s_T\eta_1,\eta_2 \rangle_{\partial T}=\sum_{E\subset\partial T, E\in\mathcal{E}_h}s_T^E\langle\eta_1,\eta_2 \rangle_E,\\
    &\ab{\cdot}{\cdot}^s_{\partial\mathcal{T}}=\sum_{T\in\mathcal{T}}\langle\cdot,\cdot\rangle^s_{\partial T}.
\end{align*}

With this notation and using \eqref{IntegrationByParts}, \eqref{barstar2}, we obtain for $\omega_1\in H^1\Lambda^k(\Th)$, $\omega_2\in H^1\Lambda^{k+1}(\Th)$ and $T\in\mathcal{T}_h,$
\begin{equation}\label{IntegrationByParts2}
(d^k\omega_1,\omega_2)_T=(\omega_1,\delta^{k+1}\omega_2)_T+\ab{\bar{\star}\tr\omega_1}{\tr\star\omega_2}^s_{\partial T}.
\end{equation}
In fact the regularity in \eqref{IntegrationByParts2} could be weakened as $\omega_1|_T\in H\Lambda^k(T)\cap H^s\Lambda^k(T),$ $\omega_2|_T\in L^2\Lambda^{k+1}(T)\cap H^s\Lambda^{k+1}(T),$ $\delta^{k+1}\omega_2|_T\in L^2\Lambda^k(T)$ with $s>\frac{1}{2}$. It is also straightforward to verify the elementary identity
\begin{equation}\label{meanjump}
    \langle \bar{\star}\tr\omega_1,\tr\star\omega_2\rangle_{\partial \mathcal{T}_h}^s=\langle \bar{\star}\db{\tr\omega_1},\lr{\tr\star\omega_2}\rangle+\langle \bar{\star}\lr{\tr\omega_1},\db{\tr\star\omega_2}\rangle.
\end{equation}

Let $d^k_h$ and $\delta^{k+1}_h$ denote the piecewise $d^k$ and $\delta^{k+1}$ w.r.t.~the partition $\Th$, respectively. We define spaces of broken polynomial differential forms
\begin{equation}\label{DGFEpairs}
    \mathcal{V}_h^-=\left\{\begin{aligned}
        &\tilde{\mathcal{P}}_{r_1}\Lambda^{k-1}(\Th),\\
        &\tilde{\mathcal{P}}^-_{r_1}\Lambda^{k-1}(\Th)
    \end{aligned}\right\},\quad \mathcal{V}_h=\left\{\begin{aligned}
        &\tilde{\mathcal{P}}_{r_2}\Lambda^{k}(\Th),\\
        &\tilde{\mathcal{P}}^-_{r_2}\Lambda^{k}(\Th)
    \end{aligned}\right\},\quad \mathcal{V}^+_h=\left\{\begin{aligned}
        &\tilde{\mathcal{P}}_{r_3}\Lambda^{k+1}(\Th),\\
        &\tilde{\mathcal{P}}^-_{r_3}\Lambda^{k+1}(\Th)
    \end{aligned}\right\},
\end{equation}
where $r_1$, $r_2$, $r_3$ are fixed non-negative integers. Given a space $\tilde{V}\subseteq H^1\Lambda^k(\mathcal{T}_h)$, we make use of the trace space \begin{equation*}
    \tr\tilde{V}:=\{v\in L^2\Lambda^k(\mathcal{E}_h): \forall E\in\mathcal{E}_h~ \exists \omega\in \tilde{V}\text{ with }\tr_E\omega=v|_E\}.
\end{equation*}

\subsection{Extended Galerkin Formulation}

\subsubsection{First Order Formulation}
In addition to $\sigma=-\delta^-u$ in Section \ref{SectionPreliminaries}, we introduce another unknown $\xi=-du$ and rewrite \eqref{HodgeLaplace} as
\begin{subequations}\label{1storder}
    \begin{align}
       \sigma+\delta u&=0,\\
       \xi+du&=0,\\
       d^-\sigma+\delta^+\xi&=-f.
    \end{align}
\end{subequations}
A variational problem  of \eqref{1storder} seeks $(\sigma,\xi,u)\in V^-\times W^+\times V$, such that
\begin{equation}\label{mixedHodgeLaplace}
\begin{aligned}
(\sigma,\tau)+(d^-\tau,u)&=0,\quad&&\tau\in V^-,\\
(\xi,\eta)+(du,\eta)&=0,\quad&&\eta\in W^+,\\
(d^-\sigma,v)+(\xi,dv)&=-(f,v),&& v\in V.
\end{aligned}
\end{equation}
The well-posedness of \eqref{mixedHodgeLaplace} will be confirmed by Theorem \ref{continuousinfsup}.

\subsubsection{Derivation of XG methods}
The XG method for \eqref{mixedHodgeLaplace} searches $\sigma_h\in\mathcal{V}_h^-$, $\xi_h\in\mathcal{V}_h^+$,  $u_h\in\mathcal{V}_h$ such that
\begin{equation*}
    \sigma_h\approx\sigma,\quad \xi_h\approx\xi,\quad u_h\approx u.
\end{equation*} 
Since $\Omega$ is a Lipschitz polyhedron, there exists $s_0\in(\frac{1}{2},1]$ relying on $\Omega$ such that 
\begin{equation}\label{regularity}
    u, d^-\sigma, \delta^+\xi\in H^{s_0}\Lambda^k(\Omega),\quad\sigma\in H^{s_0}\Lambda^{k-1}(\Omega),\quad\xi\in H^{s_0}\Lambda^{k+1}(\Omega),
\end{equation}
see, e.g., \cite{ArnoldFalkWinther2006}. Therefore on each element $T$ we could 
test \eqref{1storder} with $H^1$ differential forms and apply the formula \eqref{IntegrationByParts2}. The resulting equations are
\begin{equation}\label{XGcontinuous}
    \begin{aligned}
       &(\sigma,\tau_T)_T+(d^-\tau_T,u)_T-\langle{\bar{\star}\tr\tau_T},\tr\star u\rangle^s_{\partial T}=0,\quad \tau_T\in H^1\Lambda^{k-1}(T),\\
       &(\xi,\eta_T)_T+(\delta^+\eta_T,u)_T+\langle\bar{\star}\tr u,\tr\star\eta_T\rangle^s_{\partial T}=0,\quad\eta_T\in H^1\Lambda^{k+1}(T),\\
       &(\sigma,\delta v_T)_T+(\xi,dv_T)_T+\langle\bar{\star}\tr \sigma,{\tr\star v_T}\rangle^s_{\partial T}\\
       &\qquad-\langle{\bar{\star}\tr v_T},\tr\star \xi\rangle^s_{\partial T}=-(f,v_T)_T,\quad v_T\in H^1\Lambda^k(T).
    \end{aligned}
\end{equation}
To impose weak continuity and facilitate numerical stability, our XG method employs four ``check'' variables $\check{u}_h\in\check{\CV}_h$, $\check{u}^\star_h\in \check{\CV}^\star_h$, $\check{\sigma}_h\in\check{\CV}^-_h$, $\check{\xi}^\star_h\in\check{\CV}^{+\star}_h$, where  
\begin{align*}
    &\check{\CV}^-_h=\mathcal{P}_{r_4}\Lambda^{k-1}(\mathcal{E}_h),\quad\check{\CV}_h=\mathcal{P}_{r_5}\Lambda^k(\mathcal{E}_h),\\
    &\check{\CV}^{+\star}_h=\{\check{\xi}_h^\star\in\mathcal{P}_{r_6}\Lambda^{n-k-1}(\mathcal{E}_h): \check{\xi}_h^\star=0\text{ on }\partial\Omega\},\\
    &\check{\CV}^\star_h=\{\check{v}_h^\star\in\mathcal{P}_{r_7}\Lambda^{n-k}(\mathcal{E}_h): \check{v}_h^\star=0\text{ on }\partial\Omega\},
\end{align*}
with $\{r_i\}_{i=4}^7$ being fixed non-negative integers. We choose numerical fluxes
\begin{align*}
  &\hat{u}_h\in L^2\Lambda^k(\mathcal{E}_h),\quad\hat{u}^\star_h\in L^2\Lambda^{n-k}(\mathcal{E}_h),\\
  &\hat{\sigma}_h\in L^2\Lambda^{k-1}(\mathcal{E}_h),\quad \hat{\xi}^\star_h\in L^2\Lambda^{n-k-1}(\mathcal{E}_h),
\end{align*}
such that on each face in $\mathcal{E}_h$,  \begin{equation*}
    \hat{u}_h\approx\tr u,\quad \hat{u}^\star_h\approx\tr\star u,\quad \hat{\sigma}_h\approx\tr\sigma,\quad\hat{\xi}^\star_h\approx\tr\star\xi.
\end{equation*}
Let $\bar{u}_h$, $\bar{u}^\star_h$, $\bar{\sigma}_h$, $\bar{\xi}^\star_h$ be central fluxes
\begin{equation}\label{bar}
  \begin{aligned}
    &\bar{u}_h:=\db{\tr  u_h},\quad\bar{u}_h^\star:=\db{\tr\star u_h},\\
    &\bar{\sigma}_h:=\db{\tr \sigma_h},\quad\bar{\xi}_h^\star:=\db{\tr\star\xi_h}.
  \end{aligned}
\end{equation}
The numerical fluxes of the XG method have the following distinctive feature
\begin{equation}\label{hat}
\begin{aligned}
    &\hat{u}_h=\bar{u}_h+\check{u}_h\text{ on }\mathcal{E}_h,\quad\hat{\sigma}_h=\bar{\sigma}_h+\check{\sigma}_h\text{ on }\mathcal{E}_h,\\
    &\hat{u}^\star_h=\bar{u}^\star_h+\check{u}^\star_h\text{ on }\mathcal{E}^o_h,\quad\hat{\xi}^\star_h=\bar{\xi}^\star_h+\check{\xi}^\star_h\text{ on }\mathcal{E}^o_h,\\
    &\hat{u}^\star_h=\check{u}^\star_h=0\text{ on }\mathcal{E}^\partial_h,\quad\hat{\xi}^\star_h=\check{\xi}^\star_h=0\text{ on }\mathcal{E}^\partial_h,
\end{aligned}
\end{equation}

In view of the local equation \eqref{XGcontinuous}, we propose a seven-field XG formulation for \eqref{HodgeLaplace}: Find $(\sigma_h,\check{\sigma}_h,\xi_h,\check{\xi}^\star_h,u_h,\check{u}_h,\check{u}^\star_h)\in \CV_h^-\times\check{\CV}_h^-\times\CV_h^+\times\check{\CV}_h^{+\star}\times\CV_h\times\check{\CV}_h\times\check{\CV}^\star_h$, such that
on each $T\in\mathcal{T}_h,$ 
\begin{subequations}\label{XGlocal}
    \begin{align}
       &(\sigma_h,\tau_h)_T+(d^-\tau_h,u_h)_T-\langle{\bar{\star}\tr\tau_h},\hat{u}_h^\star\rangle^s_{\partial T}=0,\label{XGT1}\\
       &(\xi_h,\eta_h)_T+(\delta^+\eta_h,u_h)_T+\langle\bar{\star}\hat{u}_h,\tr\star\eta_h\rangle^s_{\partial T}=0,\label{XGT2}\\
       &(\sigma_h,\delta v_h)_T+(\xi_h,dv_h)_T+\langle\bar{\star}\hat{\sigma}_h,{\tr\star v_h}\rangle^s_{\partial T}-\langle{\bar{\star}\tr v_h},\hat{\xi}_h^\star\rangle^s_{\partial T}=-(f,v_h)_T,\label{XGT3}\\
    &\ab{\bar{\star}\check{\sigma}_h-\textsf{a}\lr{\tr\star u_h}}{\bar{\star}\check{\tau}_h}=0,\label{checka}\\
    &\ab{\check{\xi}_h^\star-\textsf{b}\bar{\star}\lr{\tr u_h}}{\check{\eta}_h^\star}=0,\label{checkb}\\
    &\ab{\bar{\star}\check{u}_h-\textsf{c}\lr{\tr \star\xi_h}}{\bar{\star}\check{v}_h}=0,\label{checkc}\\
    &\ab{\check{u}_h^\star-\textsf{d}\bar{\star}\lr{\tr\sigma_h}}{\check{v}_h^\star}=0,\label{checkd}
  \end{align}
\end{subequations}
for all $(\tau_h,\check{\tau}_h,\eta_h,\check{\eta}^\star_h,v_h,\check{v}_h,\check{v}_h^\star)\in \CV_h^-\times\check{\CV}_h^-\times\CV_h^+\times\check{\CV}_h^{+\star}\times\CV_h\times\check{\CV}_h\times\check{\CV}_h^\star$.
Here $\textsf{a}, \textsf{b}, \textsf{c}, \textsf{d}\in\mathcal{P}_0(\mathcal{E}_h)=\mathcal{P}_0\Lambda^0(\mathcal{E}_h)$ are piecewise constants that will be specified in the next section. 
\begin{remark}\label{consistency}
It follows from \eqref{XGcontinuous} that \eqref{XGT1}--\eqref{XGT3} hold with $(\sigma,\xi,u)$ replacing $(\sigma_h,\xi_h,u_h)$. In addition, \eqref{checka}--\eqref{checkd} are true provided $\check{\sigma}_h=0$, $\check{\xi}^\star_h=0$, $\check{u}_h=0$, $\check{u}^\star_h=0$ and $(\sigma_h,\xi_h,u_h)$ is replaced by $(\sigma,\xi,u)$. Therefore, the overall XG method is consistent.
\end{remark}

Equations \eqref{checka}--\eqref{checkd} are motivated by Nitsche's trick. First it follows from $\sigma\in H\Lambda^{k-1}(\Omega)$, $u\in H\Lambda^k(\Omega)$ that $\lr{\tr\sigma}=0$, $\lr{\tr u}=0$ on $\mathcal{E}_h^o$. In the XG method, such inter-element continuity is weakly enforced by  \eqref{checkb}, \eqref{checkd}. On the other hand, $u\in \mathring{H}^*\Lambda^k(\Omega)$, $\xi\in \mathring{H}^*\Lambda^{k+1}(\Omega)$ imply that $\lr{\tr\star u}=0$, $\lr{\tr\star\xi}=0$ on $\mathcal{E}_h$. Weak inter-element continuity from this viewpoint could be imposed by \eqref{checka}, \eqref{checkc}. Therefore, the XG method \eqref{XGlocal} is a compromise between the following two types of conformity
\begin{subequations}\label{conformity}
\begin{align}
    &(\sigma,u)\in H\Lambda^{k-1}(\Omega)\times H\Lambda^k(\Omega),\label{conformity1}\\
    &(\xi,u)\in \mathring{H}^*\Lambda^{k+1}(\Omega)\times \mathring{H}^*\Lambda^k(\Omega).\label{conformity2}
\end{align}
\end{subequations}

For simplicity, we make the assumption
\begin{equation}\label{CheckAssumption}
\begin{aligned}
       &\tr\star\mathcal{V}_h\subseteq\bar{\star}\check{\mathcal{V}}_h^-,\quad\bar{\star}\tr\mathcal{V}_h\subseteq\check{\mathcal{V}}_h^{+\star},\\
       &\tr\star\mathcal{V}^+_h\subseteq\bar{\star}\check{\mathcal{V}}_h,\quad\bar{\star}\tr\mathcal{V}^-_h\subseteq\check{\mathcal{V}}_h^\star.
\end{aligned}
\end{equation}
It then follows that all check variables can be eliminated using the relations
\begin{equation}\label{check}
    \begin{aligned}
 &\bar{\star}\check{\sigma}_h=\textsf{a}\lr{\tr\star u_h},\quad\check{\xi}_h^\star=\textsf{b}\bar{\star}\lr{\tr u_h},\\
    &\bar{\star}\check{u}_h=\textsf{c}\lr{\tr \star\xi_h},\quad\check{u}_h^\star=\textsf{d}\bar{\star}\lr{\tr\sigma_h},
    \end{aligned}
\end{equation}
see \eqref{XGcompact} for a compact three-field formulation. 
However, with the help of several check variables, we are able to present a more transparent convergence analysis for the original augmented scheme.

\textbf{Four Field Formulation I.}~Applying  \eqref{IntegrationByParts2} to \eqref{XGT1}--\eqref{XGT3} leads to
\begin{equation}\label{XGlocal2}
    \begin{aligned}
       &(\sigma_h,\tau_h)_T+(d^-\tau_h,u_h)_T-\langle{\bar{\star}\tr\tau_h},\hat{u}_h^\star\rangle^s_{\partial T}=0,\\
       &(\xi_h,\eta_h)_T+(\eta_h,du_h)_T+\langle \bar{\star}(\hat{u}_h-\tr u_h),\tr\star\eta_h\rangle^s_{\partial T}=0,\\
       &(d^-\sigma_h,v_h)_T+(\xi_h,dv_h)_T+\langle \bar{\star}(\hat{\sigma}_h-\tr\sigma_h),{\tr\star v_h}\rangle^s_{\partial T}\\
       &\qquad-\langle{\bar{\star}\tr v_h},\hat{\xi}_h^\star\rangle^s_{\partial T}=-(f,v_h)_T.
    \end{aligned}
\end{equation}
Summing \eqref{XGlocal2} over all $T\in\mathcal{T}_h$ and using \eqref{hat}, \eqref{bar}, \eqref{meanjump}, we obtain
\begin{equation}\label{XGglobal}
    \begin{aligned}
       &(\sigma_h,\tau_h)+(d^-_h\tau_h,u_h)-\langle{\bar{\star}\lr{\tr\tau_h}},\db{\tr\star{u}_h}\rangle-\langle{\bar{\star}\lr{\tr\tau_h}},\check{u}_h^\star\rangle=0,\\
       &(\xi_h,\eta_h)+(\eta_h,d_hu_h)+\langle\bar{\star}\check{u}_h,\lr{\tr\star\eta_h}\rangle-\langle\bar{\star}\lr{\tr u_h},\db{\tr\star\eta_h}\rangle=0,\\
       &(d_h^-\sigma_h,v_h)+(\xi_h,d_hv_h)+\langle\bar{\star}\check{\sigma}_h,\lr{\tr\star v_h}\rangle-\langle\bar{\star}\lr{\tr\sigma_h},\db{\tr\star v_h}\rangle\\
       &\qquad-\langle{\bar{\star}\lr{\tr v_h}},\db{\tr\star\xi_h}\rangle-\langle{\bar{\star}\lr{\tr v_h}},\check{\xi}_h^\star\rangle=-(f,v_h).
    \end{aligned}
\end{equation}
Eliminating $\check{\sigma}_h, \check{u}_h, \check{u}^\star_h$ by \eqref{check}, the XG method \eqref{XGlocal} is equivalent to the following four-field formulation:  Find $\sigma_h\in\mathcal{V}_h^-$, $\xi_h\in\mathcal{V}_h^+$, $\check{\xi}_h^\star\in\check{\mathcal{V}}_h^{+\star}$, $u_h\in\mathcal{V}_h$,  such that
\begin{equation}\label{XGreduced0}
    \begin{aligned}
       &(\sigma_h,\tau_h)+(d^-_h\tau_h,u_h)-\langle{\bar{\star}\lr{\tr\tau_h}},\db{\tr\star{u}_h}\rangle-\langle\textsf{d}\lr{\tr\sigma_h},\lr{\tr\tau_h}\rangle=0,\\
       &(\xi_h,\eta_h)+(\eta_h,d_hu_h)+\langle\textsf{c}\lr{\tr\star\xi_h},\lr{\tr\star\eta_h}\rangle-\langle\bar{\star}\lr{\tr u_h},\db{\tr\star\eta_h}\rangle=0,\\
       &(d_h^-\sigma_h,v_h)+(\xi_h,d_hv_h)+\langle\textsf{a}\lr{\tr\star u_h},\lr{\tr\star v_h}\rangle-\langle\bar{\star}\lr{\tr\sigma_h},\db{\tr\star v_h}\rangle\\
       &\quad-\langle{\bar{\star}\lr{\tr v_h}},\db{\tr\star\xi_h}\rangle-\langle{\bar{\star}\lr{\tr v_h}},\check{\xi}_h^\star\rangle=-(f,v_h),\\
       &\ab{\check{\xi}_h^\star-\textsf{b}\bar{\star}\lr{\tr u_h}}{\check{\eta}_h^\star}=0,
    \end{aligned}
\end{equation}
for all $(\tau_h,\eta_h,\check{\eta}_h^\star,v_h)\in\mathcal{V}_h^-\times\mathcal{V}_h^+\times\check{\mathcal{V}}_h^{+\star}\times\mathcal{V}_h$.

\textbf{Four Field Formulation II.}
Similar arguments in \eqref{XGlocal2}, \eqref{XGglobal} yield
\begin{equation}
    \begin{aligned}
       &(\sigma_h,\tau_h)+(\delta_hu_h,\tau_h)+\langle{\bar{\star}\db{\tr\tau_h}},\lr{\tr\star u_h}\rangle-\langle{\bar{\star}\lr{\tr\tau_h}},\check{u}_h^\star\rangle=0,\\
       &(\xi_h,\eta_h)+(\delta^+_h\eta_h,u_h)+\langle\bar{\star}\db{\tr u_h},\lr{\tr\star\eta_h}\rangle+\langle\bar{\star}\check{u}_h,\lr{\tr\star\eta_h}\rangle=0,\\
       &(\sigma_h,\delta_hv_h)+(\delta_h^+\xi_h,v_h)+\langle\bar{\star}\db{\tr\sigma_h},\lr{\tr\star v_h}\rangle+\langle\bar{\star}\check{\sigma}_h,\lr{\tr\star v_h}\rangle\\
       &\quad-\langle{\bar{\star}\lr{\tr v_h}},\check{\xi}_h^\star\rangle+\langle{\bar{\star}\db{\tr v_h}},\lr{\tr\star\xi_h}\rangle=-(f,v_h).
  \end{aligned}
\end{equation}
Eliminating $\check{\xi}^\star_h, \check{u}_h, \check{u}^\star_h$ by \eqref{check}, the XG method \eqref{XGlocal} is also equivalent to another four-field formulation:  Find $\sigma_h\in\mathcal{V}_h^-$, $\check{\sigma}_h\in\check{\mathcal{V}}_h^-$, $\xi_h\in\mathcal{V}_h^+$,  $u_h\in\mathcal{V}_h$,  such that
\begin{equation}\label{XGreducedII}
    \begin{aligned}
       &(\sigma_h,\tau_h)+(\delta_hu_h,\tau_h)+\langle{\bar{\star}\db{\tr\tau_h}},\lr{\tr\star u_h}\rangle-\langle\textsf{d}\lr{\tr\sigma_h},\lr{\tr\tau_h}\rangle=0,\\
       &(\xi_h,\eta_h)+(\delta^+_h\eta_h,u_h)+\langle\bar{\star}\db{\tr u_h},\lr{\tr\star\eta_h}\rangle+\langle\textsf{c}\lr{\tr\star\xi_h},\lr{\tr\star\eta_h}\rangle=0,\\
       &(\sigma_h,\delta_hv_h)+(\delta_h^+\xi_h,v_h)+\langle\bar{\star}\db{\tr\sigma_h},\lr{\tr\star v_h}\rangle+\langle\bar{\star}\check{\sigma}_h,\lr{\tr\star v_h}\rangle\\
       &\quad-\langle\textsf{b}\lr{\tr u_h},\lr{\tr v_h}\rangle+\langle{\bar{\star}\db{\tr v_h}},\lr{\tr\star\xi_h}\rangle=-(f,v_h),\\
       &\langle\bar{\star}\check{\sigma}_h-\textsf{a}\lr{\tr\star u_h},\bar{\star}\check{\tau}_h\rangle=0,
  \end{aligned}
\end{equation}
for all $(\sigma_h,\check{\sigma}_h,\eta_h,v_h)\in\mathcal{V}_h^-\times\check{\mathcal{V}}_h^-\times\mathcal{V}_h^+\times\mathcal{V}_h$.

\textbf{Three Field Formulation.}~Eliminating $\check{\xi}_h^\star$ by \eqref{checkb}, the scheme \eqref{XGreduced} further yields a three field formulation: Find $\sigma_h\in\mathcal{V}_h^-$, $\xi_h\in\mathcal{V}_h^+$,$u_h\in\mathcal{V}_h$ such that
\begin{equation}\label{XGcompact}
    \begin{aligned}
       &(\sigma_h,\tau_h)+(d^-_h\tau_h,u_h)-\langle{\bar{\star}\lr{\tr\tau_h}},\db{\tr\star{u}_h}\rangle-\langle{\textsf{d}\lr{\tr\sigma_h}},\lr{\tr\tau_h}\rangle=0,\\
       &(\xi_h,\eta_h)+(\eta_h,d_hu_h)+\langle\textsf{c}\lr{\tr \star\xi_h},\lr{\tr\star\eta_h}\rangle-\langle\bar{\star}\lr{\tr u_h},\db{\tr\star\eta_h}\rangle=0,\\
       &(d_h^-\sigma_h,v_h)+(\xi_h,d_hv_h)+\langle\textsf{a}\lr{\tr\star u_h},\lr{\tr\star v_h}\rangle-\langle\bar{\star}\lr{\tr\sigma_h},\db{\tr\star v_h}\rangle\\
       &-\langle{\bar{\star}\lr{\tr v_h}},\db{\tr\star\xi_h}\rangle-\langle{\textsf{b}\lr{\tr u_h}},\lr{\tr v_h}\rangle=-(f,v_h),
    \end{aligned}
\end{equation}
for all $(\tau_h,\eta_h,v_h)\in\mathcal{V}_h^-\times\mathcal{V}_h^+\times\mathcal{V}_h$. 

Throughout the rest of this paper, 
we use $C$ to denote a generic constant depending only on $\Omega$, $C_{\text{mesh}}$, and the polynomial degrees $\{r_i\}_{i=1}^7$. We may also use $C_t>0$ to indicate an absolute constant relying the previous quantities and the extra parameter $t$. We say $A\lesssim B$ if $A\leq CB$, and $A\simeq B$ provided $A\lesssim B$ and $B\lesssim A.$

\section{Inf-sup conditions}\label{SectionInfsup}
In this section, we analyze the inf-sup stability of the continuous mixed formulation \eqref{mixedHodgeLaplace} and the four-field XG formulation \eqref{XGreduced0}. 

\subsection{Continuous Inf-Sup Condition}
First we introduce  
\begin{align*}
    a(\sigma,\xi;\tau,\eta)&:=(\sigma,\tau)+(\xi,\eta),\\
    b(\sigma,\xi;v)&:=(d^-\sigma,v)+(\xi,dv),
\end{align*} 
and the total bilinear form
\begin{align*}
  B(\sigma,\xi,u;\tau,\eta,v)=a(\sigma,\xi;\tau,\eta)+b(\sigma,\xi;v)+b(\tau,\eta;u).
\end{align*}
The continuous problem \eqref{mixedHodgeLaplace} is then written as 
\begin{equation*}
    B(\sigma,\xi,u;\tau,\eta,v)=-(f,v),\quad\forall (\tau,\eta,v)\in V^-\times W^+\times V.
\end{equation*}
The next theorem confirms the well-posedness of  \eqref{mixedHodgeLaplace}.
\begin{theorem}[Continuous Inf-Sup Condition]\label{continuousinfsup}
There exists a constant $C_{\emph{cts}}$ dependent on $\Omega$ such that
\begin{equation*}
    \inf_{\substack{(\sigma,\xi,u)\in {V^-}\times W^+\times V\\ \|\sigma\|_{V^-}^2+\|\xi\|^2+\|u\|^2_V=1}}\sup_{\substack{(\tau,\eta,v)\in V^-\times W^+\times V\\ \|\tau\|_{V^-}^2+\|\eta\|^2+\|v\|^2_V=1}} B(\sigma,\xi,u;\tau,\eta,v)\geq C_{\emph{cts}}>0.
\end{equation*}
\end{theorem}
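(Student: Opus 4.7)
My plan is to prove the inf-sup bound constructively. Given $(\sigma,\xi,u)\in V^-\times W^+\times V$, I will exhibit a test triple $(\tau,\eta,v)$ with $B(\sigma,\xi,u;\tau,\eta,v)\gtrsim \|\sigma\|_{V^-}^2+\|\xi\|^2+\|u\|_V^2$ together with $\|\tau\|_{V^-}^2+\|\eta\|^2+\|v\|_V^2\lesssim \|\sigma\|_{V^-}^2+\|\xi\|^2+\|u\|_V^2$; dividing the two bounds delivers $C_{\text{cts}}>0$. The naive choice $(\tau,\eta,v)=(\sigma,\xi,u)$ only yields $\|\sigma\|^2+\|\xi\|^2+2(d^-\sigma,u)+2(\xi,du)$, which controls none of $\|d^-\sigma\|$, $\|du\|$, or $\|u\|$, so the core of the argument is to design corrections that recover these three missing quantities.

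The corrections are suggested by three structural facts. First, since $d\circ d^-=0$, $d^-\sigma$ lies in $V$ with $\|d^-\sigma\|_V=\|d^-\sigma\|$, so adding it into the $v$-slot produces $\|d^-\sigma\|^2$ from $(d^-\sigma,v)$. Second, $du$ already lies in $W^+$, so adding it into the $\eta$-slot produces $\|du\|^2$ from $(\eta,du)$. Third, to recover $\|u\|$ I will invoke the Hodge decomposition \eqref{Helm} to write $u=u_B+u_\perp$ with $u_B=d^-w$ for some $w\in\mathfrak{Z}^{(k-1)\perp}\subset V^-$ satisfying $\|w\|_{V^-}\lesssim\|u_B\|$ by \eqref{Poincare}; the orthogonality of the decomposition yields the key identity $(d^-w,u)=\|u_B\|^2$, and since $du=du_\perp$ a second application of \eqref{Poincare} gives $\|u\|_V^2\simeq \|u_B\|^2+\|du\|^2$ (with constants depending on $c_P$). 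Combining these ingredients, I will take
$$\tau=\sigma+\beta w,\qquad \eta=\xi+du,\qquad v=-u+d^-\sigma$$
for a small $\beta>0$ to be chosen in terms of $c_P$. A direct expansion of $B$ --- using $d\circ d^-=0$ to kill $(\xi,d(d^-\sigma))$ and the sign in $v$ to cancel the two $(d^-\sigma,u)$ contributions --- reduces $B$ to
$$\|\sigma\|^2+\|\xi\|^2+\|d^-\sigma\|^2+\|du\|^2+\beta\|u_B\|^2+\beta(\sigma,w)+(\xi,du),$$
whose stray cross terms are absorbed by Young's inequality with the balance $\beta c_P^2\leq 1$ (i.e., $\beta\simeq c_P^{-2}$). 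The upper bound on $\|(\tau,\eta,v)\|$ then follows from the triangle inequality together with $dv=-du$ and $d^-\tau=d^-\sigma+\beta u_B$.

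The main obstacle is that the block $a(\sigma,\xi;\tau,\eta)=(\sigma,\tau)+(\xi,\eta)$ of $B$ controls only the $L^2$-norms of its arguments and is \emph{not} coercive in the graph norm of $V^-$, so direct Babu\v ska--Brezzi reasoning in the natural product norm fails. The correction $\beta w$ to $\tau$ is tailored precisely to repair this defect: through the orthogonality identity $(d^-w,u)=\|u_B\|^2$ it injects a positive multiple of $\|u_B\|^2$ into $B$, which together with the $\|du\|^2$ contributed by the $\eta$-correction recovers the full $V$-norm of $u$ via Poincar\'e. Careful bookkeeping of the dependence on $c_P$ when tuning $\beta$ and balancing the Young inequalities is the main quantitative step.
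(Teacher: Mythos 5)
Your proposal is correct. The expansion of $B$ at the test triple $\tau=\sigma+\beta w$, $\eta=\xi+du$, $v=-u+d^-\sigma$ does reduce, via $d\circ d^{-}=0$ and the cancellation of the two $(d^-\sigma,u)$ terms, to
\begin{equation*}
\|\sigma\|^2+\|\xi\|^2+\|d^-\sigma\|^2+\|du\|^2+\beta\|u_{\mathfrak B}\|^2+\beta(\sigma,w)+(\xi,du),
\end{equation*}
and the two stray terms are absorbed exactly as you say once $\beta\le c_P^{-2}$; the bound $\|u\|_V^2\lesssim\|u_{\mathfrak B}\|^2+(1+c_P^2)\|du\|^2$ and the boundedness of the test triple then give the inf-sup constant by homogeneity. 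The route is genuinely different from the paper's: the paper verifies the two Babu\v{s}ka--Brezzi conditions separately --- coercivity of $a$ on the kernel of $b$ (where $v=d^-\tau$ forces $d^-\tau=0$) and the inf-sup condition of $b$ via the Hodge decomposition with $\eta=dv$ and $d^-\tau=v_{\mathfrak B}$ --- and then cites the abstract saddle-point theory to combine them. Your proof is the hand-rolled, self-contained version: the correction $d^-\sigma$ in the $v$-slot plays the role of the kernel argument, while $du$ in the $\eta$-slot and $\beta w$ in the $\tau$-slot reproduce the paper's inf-sup construction for $b$. The ingredients (Hodge decomposition \eqref{Helm}, Poincar\'e inequality \eqref{Poincare}, $d\circ d^-=0$) are identical; what your version buys is explicit tracking of the constant in terms of $c_P$ and independence from the abstract theorem, at the cost of a longer computation. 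One cosmetic remark: to match the theorem's statement, which normalizes both trial and test triples, you should record at the end that rescaling $(\tau,\eta,v)$ to unit norm divides your lower bound by the (controlled) test norm, which is presumably what you intend by ``dividing the two bounds.''
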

\begin{proof}
Consider the kernel of $b$
\begin{equation*}
    N:=\{(\tau,\eta)\in V^-\times W^+: b(\tau,\eta;v)=0,~\forall v\in V\}.
\end{equation*}
Given $(\tau,\eta)\in N$, taking $v=d^-\tau$ in $b(\tau,\eta;v)$ leads to $d^-\tau=0$. It then follows that
\begin{equation}\label{SPDa}
 a(\tau,\eta;\tau,\eta)=\|\tau\|_{V^-}^2+\|\eta\|^2.  
\end{equation}

Next we verify the inf-sup condition of $b$.
For any $v\in V,$ using \eqref{Helm} and \eqref{Poincare}, we obtain 
\begin{subequations}\label{vdecomp}
\begin{align}
& v=v_\mathfrak{B}+v_\perp,\quad v_\mathfrak{B}\in\mathfrak{B},~v_\perp\in\mathfrak{Z}^{\perp},\\
&\|v_\perp\|\leq c_P\|dv\|.
\end{align}
\end{subequations}
Moreover, we simply take
\begin{equation}\label{eta}
  \eta=dv\in W^+,
  \end{equation}
and choose $\tau\in V^-$ such that \begin{equation}\label{tau}
  d^-\tau=u_\mathfrak{B},\quad \|\tau\|_V\leq c_P\|u_\mathfrak{B}\|.
  \end{equation}
A combination of \eqref{vdecomp}, \eqref{eta}, \eqref{tau}, $(u_\mathfrak{B},u_\perp)=0$ yields
  \begin{align*}
    b(\tau,\eta;v)&=(d^-\tau,v)+(\eta,dv)\\
    &=\|v_{\mathfrak{B}}\|^2+\|dv\|^2\\
    &\geq 2^{-1}\min(1,c_P^{-2})\|v\|_V^2.
  \end{align*}
It then follows that
\begin{equation}\label{infsupb}
\inf_{\substack{v\in V\\ \|v\|_V=1}}\sup_{\substack{(\tau,\eta)\in V^-\times W^+\\ \|\tau\|_{V^-}^2+\|\eta\|^2=1}} b(\tau,\eta;v)\geq C_b,
\end{equation}
where $C_b$ depends only on $c_P.$

Finally the inf-sup condition of $B$ follows from \eqref{SPDa} and \eqref{infsupb}, see, e.g., \cite{Brezzi1974,XuZikatanov2003} for the theory of inf-sup condition for saddle point systems.
\end{proof}

\subsection{Discrete Inf-Sup Condition}
In this subsection, we analyze the discrete inf-sup condition of the reduced four-field XG method \eqref{XGreduced0} based on appropriate discrete spaces. First of all, we make the assumption
\begin{equation}\label{Assumption2}
\mathcal{V}_h^-\cap H\Lambda^{k-1}(\Omega)=V_h^-,\quad\mathcal{V}_h\cap H\Lambda^{k}(\Omega)=V_h.
\end{equation} 
For convenience, several tuples are introduced
\begin{align*}
&\Sigma_h=\CV^-_h\times\CV_h^+\times\check{\CV}_h^{+\star},\\
    &\tilde{\sigma}_h=(\sigma_h,\xi_h,\check{\xi}^\star_h)\in\Sigma_h,\\ &\tilde{\tau}_h=(\tau_h,\eta_h,\check{\eta}^\star_h)\in\Sigma_h.
\end{align*}
We also make use of the discrete bilinear forms
\begin{align*}
       a_h(\tilde{\sigma}_h,\tilde{\tau}_h)&=(\sigma_h,\tau_h)+(\xi_h,\eta_h)-\langle{\textsf{d}\lr{\tr\sigma_h}},\lr{\tr\tau_h}\rangle\\
       &\quad+\langle\textsf{c}\lr{\tr\star\xi_h},\lr{\tr\star\eta_h}\rangle+\ab{\textsf{b}^{-1}\check{\xi}^\star_h}{\check{\eta}^\star_h},\\
       b_h(\tilde{\sigma}_h,v_h)&=(d_h^-\sigma_h,v_h)+(\xi_h,d_hv_h)-\langle\bar{\star}\lr{\tr\sigma_h},\db{\tr\star v_h}\rangle\\
       &\quad-\langle{\bar{\star}\lr{\tr v_h}},\db{\tr\star\xi_h}\rangle-\langle\bar{\star}\lr{\tr v_h},\check{\xi}_h^\star\rangle,\\
       c_h(u_h,v_h)&=\langle\textsf{a}\lr{\tr\star u_h},\lr{\tr\star v_h}\rangle.
    \end{align*}
With the above notation, \eqref{XGreduced0} is to find $(\tilde{\sigma}_h,u_h)\in\Sigma_h\times\mathcal{V}_h$ such that
\begin{equation}\label{XGreduced}
    \begin{aligned}
    a_h(\tilde{\sigma}_h,\tilde{\tau}_h)+b_h(\tilde{\tau}_h,u_h)&=0,\\
    b_h(\tilde{\sigma}_h,v_h)+c_h(u_h,v_h)&=-(f,v_h)
    \end{aligned}
\end{equation}
for all $(\tilde{\tau}_h,v_h)\in\Sigma_h\times\mathcal{V}_h.$     
    
Let $h$ be the mesh size function such that 
\begin{align*}
    &h|_T=h_T:=\text{diam}(T),\quad T\in\mathcal{T}_h,\\
    &h|_E=h_E:=\text{diam}(E),\quad E\in\mathcal{E}_h.
\end{align*}
Given a positive constant $\rho$ and $\tau_h\in\mathcal{V}^-_h$, $v_h\in\mathcal{V}_h$, $\tilde{\tau}_h\in\Sigma_h$, we propose the following mesh dependent norms
\begin{align*}
           \vertiii{\tau_h}_{\rho,h}^2&:=\|\tau_h\|^2+\|d^-_h\tau_h\|^2+\rho^{-1}\|h^{-\frac{1}{2}}\lr{\tr \tau_h}\|_{\mathcal{E}^o_h}^2,\\ \vertiii{v_h}_{\rho,h}^2&:=\|v_h\|^2+\|d_hv_h\|^2+\rho^{-1}\|h^{-\frac{1}{2}}\lr{\tr v_h}\|_{\mathcal{E}^o_h}^2,\\
    \vertiii{\tilde{\tau}_h}^2_{\rho ,h}&:=\|\tau_h\|_{\rho,h}+\|\eta_h\|^2+\rho\|h^\frac{1}{2}\check{\eta}^\star_h\|_{\mathcal{E}^o_h}^2,\\
    \vertiii{(\tilde{\tau}_h,v_h)}^2_{\rho ,h}&:=\|\tilde{\tau}_h\|^2_{\rho ,h}+\|v_h\|_{\rho,h}^2.
\end{align*}

The next lemma shows that the DG spaces of differential forms could be approximated by conforming AFW spaces. The proof is postponed in the appendix.  
\begin{lemma}\label{approximation}
For any $v_h\in\mathcal{V}_h$ and $\tau_h\in\mathcal{V}^-_h$, there exist $v_h^c\in\mathcal{V}_h$  and $\tau_h^c\in\mathcal{V}^-_h$ such that
\begin{subequations}
\begin{align}
  &\|h^{-1}(\tau_h-\tau_h^c)\|+\|d^-_h(\tau_h-\tau_h^c)\|\leq C\|h^{-\frac{1}{2}}\lr{\tr \tau_h}\|_{\mathcal{E}^o_h},\label{approx1}\\
  &\|h^{-1}(v_h-v_h^c)\|+\|d_h(v_h-v_h^c)\|\leq C\|h^{-\frac{1}{2}}\lr{\tr v_h}\|_{\mathcal{E}^o_h}.\label{approx2}
\end{align}
\end{subequations}
\end{lemma}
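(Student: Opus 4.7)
\textbf{Proposal for the proof of Lemma \ref{approximation}.} My plan is to construct $v_h^c$ and $\tau_h^c$ via an \emph{averaging} (or \emph{enriching}) operator that maps the broken polynomial space into its conforming subspace while preserving locality. Concretely, I will use that the finite element spaces $V_h$ and $V_h^-$ admit a set of degrees of freedom (DOFs) localized on subsimplices (vertices, edges, faces, and element interiors) in the sense of \cite{ArnoldFalkWinther2006}, and that a DG function in $\mathcal{V}_h$ or $\mathcal{V}_h^-$ restricted to any element is uniquely determined by the same local DOFs. On each subsimplex $S$ shared by several elements $T_1,\dots,T_m$, a conforming function must take a single value at the DOFs on $S$, while a DG function typically gives $m$ distinct values $\varphi_i(v_h|_{T_i})$. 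This dictates the definition: set $v_h^c$ equal to $v_h$ at DOFs associated with element interiors, and at every shared DOF $\varphi_S$ set the value of $v_h^c$ to the average $\frac{1}{m}\sum_{i=1}^m \varphi_S(v_h|_{T_i})$ (with the natural boundary convention, using $m=1$ and zero extension on $\partial\Omega$ for the essential boundary case, if needed). The same construction applies to $\tau_h\in\mathcal{V}_h^-$ yielding $\tau_h^c\in V_h^-\subseteq\mathcal{V}_h^-$ by \eqref{Assumption2}.

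The estimates then follow from three ingredients used element by element. First, on a fixed element $T$, the difference $v_h-v_h^c$ is a polynomial of bounded degree that vanishes at every interior DOF and whose value at each shared DOF $\varphi_S$ is $\varphi_S(v_h|_T)-\frac{1}{m}\sum_i \varphi_S(v_h|_{T_i})=\frac{1}{m}\sum_{i\neq i_0}\varphi_S(v_h|_T-v_h|_{T_i})$, a sum of jumps of $v_h$ across faces adjacent to $T$. Second, by a standard scaling/equivalence of norms on the reference element and the fact that each DOF is bounded by the $L^2$-norm on the associated subsimplex after rescaling, the coefficient of $v_h-v_h^c$ at $\varphi_S$ contributes at most $C h_T^{(n-\dim S)/2-\dim S/2}\|\lr{\tr v_h}\|_F$ to $\|v_h-v_h^c\|_T$ through neighboring faces $F$; summing and using shape regularity yields $\|v_h-v_h^c\|_T^2\lesssim \sum_{F\subset\partial T}h_F\|\lr{\tr v_h}\|_F^2$. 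Third, the inverse inequality $\|d_h(v_h-v_h^c)\|_T\lesssim h_T^{-1}\|v_h-v_h^c\|_T$ converts the $L^2$-bound into the derivative bound, so summing over $T\in\mathcal{T}_h$ gives \eqref{approx2}. The argument for $\tau_h$ and \eqref{approx1} is identical.

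The main obstacle, and the reason I would defer this proof to an appendix as the authors do, is the bookkeeping for differential forms: the DOFs for $\mathcal{P}_r\Lambda^k$ and $\mathcal{P}_r^-\Lambda^k$ are moments of traces against lower-dimensional polynomial forms on each subsimplex, so one must verify that (a) these moments are well-defined for the DG function on each element individually, (b) the averaging construction above produces a function in $V_h$ (resp.\ $V_h^-$), which uses that matching moments on all subsimplices is equivalent to continuity of tangential traces \cite{ArnoldFalkWinther2006}, and (c) the scaling exponents in the DOF bounds combine correctly with the face measure to produce precisely $h_F^{-1}$ on the jumps. Once these are settled, the scaling argument is routine. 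A secondary point is the treatment of boundary faces: since no essential boundary conditions are imposed on $\mathcal{V}_h$ or $\mathcal{V}_h^-$ in the statement, boundary DOFs should be left unaveraged (so jumps on $\mathcal{E}_h^\partial$ do not appear on the right-hand sides of \eqref{approx1}--\eqref{approx2}, consistent with the use of $\mathcal{E}_h^o$ there).
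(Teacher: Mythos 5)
Your proposal follows essentially the same route as the paper's appendix proof: average the shared degrees of freedom (moments of traces against $\mathcal{P}^-$-forms on subsimplices, per \cite{ArnoldFalkWinther2006}) over the elements meeting each subsimplex, bound the resulting DOF discrepancies by trace jumps, and conclude by scaling of the dual basis plus an inverse inequality. The one detail you elide—that for elements sharing only a low-dimensional subsimplex $g$ the difference $\varphi_S(v_h|_T)-\varphi_S(v_h|_{T'})$ is not a single face jump but must be telescoped along a chain of face-adjacent elements in the patch $\omega_g$—is exactly the bookkeeping the paper supplies, and it is subsumed under the "(c)" caveat you already flagged.
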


In addition, we will frequently utilize the estimate
\begin{equation}\label{trace}
    \|\tr v_h\|_{\partial T}\leq C_r h_T^{-\frac{1}{2}}\|v_h\|_T,\quad\forall v_h\in\mathcal{P}_r\Lambda^k(T),\quad\forall T\in\mathcal{T}_h.
\end{equation}
In fact, \eqref{trace} immediately follows from
the well-known 
trace and inverse inequalities. The next lemma is concerned with the inf-sup condition of the bilinear form $b_h$.
\begin{lemma}[Inf-Sup Condition of $b_h$]\label{lemmabh}
Let $\rho_0>0$ be a constant and $\rho\in(0,\rho_0].$  
Let the assumptions \eqref{CheckAssumption}, \eqref{Assumption2} hold and assume $d_h\mathcal{V}_h\subseteq\mathcal{V}_h^+$.
Then we have
\begin{equation*}
    \inf_{\substack{v_h\in\CV_h\\v_h\neq0}}\sup_{\substack{\tilde{\tau}_h\in\Sigma_h\\\tilde{\tau}_h\neq0}}\frac{b_h(\tilde{\tau}_h,v_h)}{\vertiii{\tilde{\tau}_h}_{\rho,h}\vertiii{v_h}_{\rho,h}}=C_{\rho_0}>0,
\end{equation*}
\end{lemma}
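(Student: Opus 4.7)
The plan is to establish the inf-sup condition by explicitly constructing a Fortin-type test triple $\tilde{\tau}_h=(\tau_h,\eta_h,\check{\eta}_h^\star)\in\Sigma_h$ for each given $v_h\in\mathcal{V}_h$, imitating the continuous argument of Theorem~\ref{continuousinfsup} but compensating for the non-conformity of $v_h$ through the check variable $\check{\eta}_h^\star$.

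First I would apply Lemma~\ref{approximation} to obtain a conforming approximation $v_h^c\in \mathcal{V}_h\cap H\Lambda^k(\Omega)=V_h$ (using~\eqref{Assumption2}) whose error is controlled by the jump seminorm $\|h^{-\frac12}\lr{\tr v_h}\|_{\mathcal{E}_h^o}$. Then the discrete Hodge decomposition~\eqref{discreteHodge} yields $v_h^c=v_h^{c,\mathfrak{B}}+v_h^{c,\perp}$ with $v_h^{c,\mathfrak{B}}\in d^-V_h^-$ and $v_h^{c,\perp}\in\mathfrak{Z}_h^{k\perp}$, and the discrete Poincar\'e inequality~\eqref{discretePoincare} provides $\tau_h^{(1)}\in V_h^-$ with $d^-\tau_h^{(1)}=v_h^{c,\mathfrak{B}}$ and $\|\tau_h^{(1)}\|\le c_{dP}\|v_h^{c,\mathfrak{B}}\|$.

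I would then set $\tau_h:=\beta\tau_h^{(1)}\in V_h^-\subset\mathcal{V}_h^-$ (a conforming field, so $\lr{\tr\tau_h}=0$ kills the term $-\langle\bar{\star}\lr{\tr\tau_h},\db{\tr\star v_h}\rangle$), $\eta_h:=d_hv_h\in\mathcal{V}_h^+$ (admissible by the hypothesis $d_h\mathcal{V}_h\subseteq\mathcal{V}_h^+$), and $\check{\eta}_h^\star:=-\alpha\rho^{-1}h^{-1}\bar{\star}\lr{\tr v_h}\in\check{\mathcal{V}}_h^{+\star}$ (permitted by~\eqref{CheckAssumption} since $\lr{\tr v_h}$ vanishes on $\partial\Omega$), with constants $\alpha,\beta>0$ to be calibrated at the end. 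Substituting into $b_h$ and using the face isometry~\eqref{barstar3} reduces $b_h(\tilde{\tau}_h,v_h)$ to
\[
\beta(v_h^{c,\mathfrak{B}},v_h)+\|d_hv_h\|^2-\langle\bar{\star}\lr{\tr v_h},\db{\tr\star d_hv_h}\rangle+\alpha\rho^{-1}\|h^{-\tfrac12}\lr{\tr v_h}\|_{\mathcal{E}_h^o}^2.
\]
Splitting the first inner product as $\|v_h^{c,\mathfrak{B}}\|^2+(v_h^{c,\mathfrak{B}},v_h-v_h^c)$, controlling the middle jump integral via the trace inequality~\eqref{trace} applied to $d_hv_h\in\mathcal{V}_h^+$, and absorbing cross terms via Young's inequality and Lemma~\ref{approximation}, I would arrive at a lower bound of the form $\tfrac{\beta}{2}\|v_h^{c,\mathfrak{B}}\|^2+\tfrac{1}{2}\|d_hv_h\|^2+(\alpha\rho^{-1}-C_\beta)\|h^{-\tfrac12}\lr{\tr v_h}\|_{\mathcal{E}_h^o}^2$.

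To conclude I would fix $\beta$ of order one and pick $\alpha$ large enough depending only on $\rho_0$ so that the jump coefficient stays at least $C_{\rho_0}\rho^{-1}$ uniformly in $\rho\in(0,\rho_0]$. Combining with the estimate $\|v_h\|^2\lesssim\|v_h^{c,\mathfrak{B}}\|^2+\|d_hv_h\|^2+\|h^{-\tfrac12}\lr{\tr v_h}\|_{\mathcal{E}_h^o}^2$, which follows from Lemma~\ref{approximation} together with the discrete Hodge decomposition and discrete Poincar\'e bound on $v_h^{c,\perp}$, yields $b_h(\tilde{\tau}_h,v_h)\ge C_{\rho_0}\vertiii{v_h}_{\rho,h}^2$. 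The matching bound $\vertiii{\tilde{\tau}_h}_{\rho,h}\lesssim\vertiii{v_h}_{\rho,h}$ is direct from the construction, since $\vertiii{\tau_h}_{\rho,h}^2\lesssim\|v_h^{c,\mathfrak{B}}\|^2$, $\|\eta_h\|^2=\|d_hv_h\|^2$, and $\rho\|h^{\tfrac12}\check{\eta}_h^\star\|_{\mathcal{E}_h^o}^2=\alpha^2\rho^{-1}\|h^{-\tfrac12}\lr{\tr v_h}\|_{\mathcal{E}_h^o}^2$. The main obstacle is calibrating $\alpha$ so that the favorable $\alpha\rho^{-1}$ penalty term dominates the $O(1)$ losses from the Young absorptions uniformly in $\rho\in(0,\rho_0]$; this forces $\alpha$ to depend on $\rho_0$ but remain independent of $\rho$ and $h$.
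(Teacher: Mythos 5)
Your construction is essentially identical to the paper's: the same conforming approximation from Lemma \ref{approximation}, the same discrete Hodge decomposition and discrete Poincar\'e inequality to produce the conforming $\tau_h$, the same choices $\eta_h=d_hv_h$ and $\check{\eta}_h^\star=-\alpha\rho^{-1}h^{-1}\bar{\star}\lr{\tr v_h}$, and the same Young-inequality calibration of the penalty coefficient uniformly in $\rho\in(0,\rho_0]$. The only cosmetic difference is the extra scaling parameter $\beta$ on $\tau_h$, which the paper takes to be $1$; the argument is correct as proposed.
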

\begin{proof}
Given $v_h\in \mathcal{V}_h,$ we choose $v^c_h\in V_h$  in Lemma \ref{approximation} such that
\begin{equation}\label{approxv}
    \|v_h-v_h^c\|+\|d_h(v_h-v_h^c)\|\leq C_{\rho_0}\rho^{-\frac{1}{2}}\|h^{-\frac{1}{2}}\lr{\tr v_h}\|_{\mathcal{E}^o_h}.
\end{equation}
Combining \eqref{approxv} and the triangle inequality, we obtain
\begin{equation}\label{bdvhc}
    \|v_h^c\|_V\leq C_{\rho_0}\vertiii{v_h}_{\rho,h}.
\end{equation}
Due to \eqref{discreteHodge} and \eqref{discretePoincare}, there exists a stable decomposition
\begin{subequations}\label{vhc}
\begin{align}
    &v^c_h=d^-\tau_h+v_h^\perp,\quad \tau_h\in V^-_h,~ v_h^\perp\in\mathfrak{Z}_h^\perp,\\
    &\|v_h^\perp\|\leq c_{dP}\|dv_h^\perp\|=c_{dP}\|dv_h^c\|,\label{dperp}\\
    &\|\tau_h\|_{V^-}\lesssim\|d^-\tau_h\|\leq\|v_h^c\|.\label{dtau}
    \end{align}
\end{subequations}
It then follows from \eqref{approxv}, \eqref{vhc} and the triangle inequalities that
\begin{equation}\label{vhbound}
\begin{aligned}
    \vertiii{v_h}_{\rho,h}&\leq\vertiii{v_h^c}_{\rho,h}+\vertiii{v_h-v_h^c}_{\rho,h}\\
    &\leq \|v_h^c\|+\|dv_h^c\|+C_{\rho_0}\rho^{-\frac{1}{2}}\|h^{-\frac{1}{2}}\lr{\tr v_h}\|_{\mathcal{E}^o_h}\\
    &\leq C_{\rho_0}\big(\|d^-\tau_h\|+\|d_hv_h\|+\rho^{-\frac{1}{2}}\|h^{-\frac{1}{2}}\lr{\tr v_h}\|_{\mathcal{E}^o_h}\big).
\end{aligned}
\end{equation}
We now take $\tilde{\tau}_h=(\tau_h,\eta_h,\check{\eta}_h^\star)\in\Sigma_h$, where $\eta_h\in\mathcal{V}_h^+$, $\check{\eta}_h^\star\in\check{\mathcal{V}}_h^{+\star}$ are defined as 
\begin{equation}\label{etah}
    \eta_h=d_hv_h,\quad \check{\eta}_h^\star=-t\rho^{-1}h^{-1}\bar{\star}\lr{\tr v_h}
\end{equation} with some constant $t$ to be specified later. Direct calculation shows 
\begin{align*}
    b_h(\tilde{\tau}_h,v_h)&=(d^-\tau_h,v_h^c)+(d^-\tau_h,v_h-v_h^c)+\|d_hv_h\|^2\\
    &-\langle{\bar{\star}\lr{\tr v_h}},\db{\tr\star d_hv_h}\rangle+t\rho^{-1}\|h^{-\frac{1}{2}}\lr{\tr v_h}\|_{\mathcal{E}^o_h}^2.
\end{align*}
Using the previous identity, the estimates \eqref{dtau}, \eqref{approxv}, \eqref{trace}, and a Young's inequality with  arbitrary $\varepsilon>0$, we obtain that
\begin{align*}
    &b_h(\tilde{\tau}_h,v_h)\geq\|d^-\tau_h\|^2-C\|d^-\tau_h\|\|h^{-\frac{1}{2}}\lr{\tr v_h}\|_{\mathcal{E}^o_h}+\|d_hv_h\|^2\\
    &\qquad-C\|h^{-\frac{1}{2}}\lr{\tr v_h}\|_{\mathcal{E}^o_h}\|d_hv_h\|+t\rho^{-1}\|h^{-\frac{1}{2}}\lr{\tr v_h}\|_{\mathcal{E}^o_h}^2\\
    &\quad\geq (1-2^{-1}\rho\varepsilon C^2)\big(\|d^-\tau_h\|^2+\|d_hv_h\|^2\big)+(t-\varepsilon^{-1})\rho^{-1}\|h^{-\frac{1}{2}}\lr{\tr v_h}\|_{\mathcal{E}^o_h}^2.
\end{align*}
In the above inequality, taking $\varepsilon=\rho^{-1}C^{-2}$, $t=2\rho_0 C^2$ and using \eqref{vhbound} lead to
\begin{equation}\label{blower}
\begin{aligned}
    &b_h(\tilde{\tau}_h,v_h)\\
    &\geq2^{-1}\big(\|d^-\tau_h\|^2+\|d_hv_h\|^2\big)+C_{\rho_0}\rho^{-1}\|h^{-\frac{1}{2}}\lr{\tr v_h}\|_{\mathcal{E}_h^o}^2\\
    &\geq C_{\rho_0}\vertiii{v_h}^2_{\rho,h}.
    \end{aligned}
\end{equation}
On the other hand, a combination of \eqref{dtau}, \eqref{etah}, \eqref{bdvhc} yields
\begin{equation}\label{tauetaupper}
    \vertiii{\tilde{\tau}_h}_{\rho,h}\leq C_{\rho_0}\vertiii{v_h}_{\rho,h}.
\end{equation}
Finally we finish the proof by \eqref{blower} and \eqref{tauetaupper}.
\end{proof}

Now it remains to verify that $a_h$ is coercive on the kernel of $b_h$.
\begin{lemma}[Coercivity of $a_h$]\label{lemmaah}
Let $\rho_0>0$ be a constant and $\rho\in(0,\rho_0].$ Let \begin{align*}
    &\textsf{b}\simeq\rho ^{-1}h^{-1},\quad \textsf{c}\simeq\rho h,\quad \textsf{d}\simeq-\rho ^{-1}h^{-1},\\
    &\mathcal{N}_h=\{\tilde{\tau}_h\in\Sigma_h: b_h(\tilde{\tau}_h,v_h)=0,~\forall v_h\in\mathcal{V}_h\}.
\end{align*} 
Then it holds that 
\begin{equation*}
    {a_h(\tilde{\tau}_h,\tilde{\tau}_h)}\geq C_{\rho_0}\vertiii{\tilde{\tau}_h}^2_{\rho,h},\quad\forall\tilde{\tau}_h\in\mathcal{N}_h.
\end{equation*}
\end{lemma}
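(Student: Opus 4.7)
With the prescribed scaling $\textsf{b}\simeq\rho^{-1}h^{-1}$, $\textsf{c}\simeq\rho h$, $\textsf{d}\simeq-\rho^{-1}h^{-1}$, a direct expansion of $a_h(\tilde{\tau}_h,\tilde{\tau}_h)$ for $\tilde{\tau}_h=(\tau_h,\eta_h,\check{\eta}_h^\star)$ immediately shows
\begin{equation*}
a_h(\tilde{\tau}_h,\tilde{\tau}_h) \simeq \|\tau_h\|^2 + \|\eta_h\|^2 + \rho^{-1}\|h^{-\frac{1}{2}}\lr{\tr\tau_h}\|_{\mathcal{E}_h^o}^2 + \rho\|h^{\frac{1}{2}}\lr{\tr\star\eta_h}\|_{\mathcal{E}_h^o}^2 + \rho\|h^{\frac{1}{2}}\check{\eta}_h^\star\|_{\mathcal{E}_h^o}^2.
\end{equation*}
Comparing with $\vertiii{\tilde{\tau}_h}^2_{\rho,h}$, every ingredient is already controlled except $\|d_h^-\tau_h\|^2$, and recovering this single missing piece is the crux of the proof. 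Note that the cross-face term $\rho\|h^{1/2}\lr{\tr\star\eta_h}\|^2_{\mathcal{E}_h^o}$ produced by $\textsf{c}$ is harmless surplus and can simply be discarded.

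\textbf{Recovery of $\|d_h^-\tau_h\|^2$ via the kernel condition.} The idea is to exploit $b_h(\tilde{\tau}_h,v_h)=0$ with a \emph{conforming} test function built from $\tau_h$. By Lemma~\ref{approximation}, select $\tau_h^c\in V_h^-$ (the AFW conforming subspace) with $\|h^{-1}(\tau_h-\tau_h^c)\|+\|d_h^-(\tau_h-\tau_h^c)\|\leq C\|h^{-1/2}\lr{\tr\tau_h}\|_{\mathcal{E}_h^o}$. I then set $v_h:=d^-\tau_h^c$, which lies in $d^-V_h^-\subseteq V_h\subseteq\mathcal{V}_h$ thanks to \eqref{Assumption2} and the discrete de~Rham complex. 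Because $v_h$ is conforming, $\lr{\tr v_h}=0$ on $\mathcal{E}_h^o$; because $d\circ d^-=0$, one has $d_h v_h=0$. The kernel identity therefore collapses to
\begin{equation*}
(d_h^-\tau_h,d^-\tau_h^c) = \ab{\bar{\star}\lr{\tr\tau_h}}{\db{\tr\star v_h}}.
\end{equation*}
Splitting $\|d_h^-\tau_h\|^2 = (d_h^-\tau_h,d^-\tau_h^c) + (d_h^-\tau_h,d_h^-(\tau_h-\tau_h^c))$ and estimating the right hand side with Cauchy--Schwarz, the trace--inverse inequality \eqref{trace} applied to the polynomial $\star v_h$ (giving $\|h^{1/2}\db{\tr\star v_h}\|_{\mathcal{E}_h^o}\leq C\|v_h\|$), and Lemma~\ref{approximation}, one reaches
\begin{equation*}
\|d_h^-\tau_h\|^2 \leq C\|h^{-\frac{1}{2}}\lr{\tr\tau_h}\|_{\mathcal{E}_h^o}\bigl(\|d_h^-\tau_h\|+\|h^{-\frac{1}{2}}\lr{\tr\tau_h}\|_{\mathcal{E}_h^o}\bigr).
\end{equation*}
A Young's inequality then absorbs $\|d_h^-\tau_h\|$ and yields $\|d_h^-\tau_h\|^2\leq C\|h^{-1/2}\lr{\tr\tau_h}\|^2_{\mathcal{E}_h^o}$.

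\textbf{Putting it together.} Writing $\|h^{-1/2}\lr{\tr\tau_h}\|^2_{\mathcal{E}_h^o}=\rho\cdot\rho^{-1}\|h^{-1/2}\lr{\tr\tau_h}\|^2_{\mathcal{E}_h^o}$ and using $\rho\leq\rho_0$ together with the expansion of $a_h(\tilde{\tau}_h,\tilde{\tau}_h)$ from Step~1 gives $\|d_h^-\tau_h\|^2\leq C_{\rho_0}\,a_h(\tilde{\tau}_h,\tilde{\tau}_h)$. Combining this with the other five terms already controlled in Step~1 produces the required $\vertiii{\tilde{\tau}_h}^2_{\rho,h}\leq C_{\rho_0}\,a_h(\tilde{\tau}_h,\tilde{\tau}_h)$.

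\textbf{Main obstacle.} The non-trivial point is obtaining $\|d_h^-\tau_h\|^2$ with the correct $\rho$-dependence; a naive choice of test function loses at most the scaling. The trick is to use a \emph{conforming} companion $v_h=d^-\tau_h^c$, which simultaneously kills the $d_h v_h$ and the $\lr{\tr v_h}$ contributions in $b_h$, leaving only one face coupling that is sharply controlled by $\|h^{-1/2}\lr{\tr\tau_h}\|_{\mathcal{E}_h^o}$. Everything else is bookkeeping with Young's inequality, trace--inverse estimates, and Lemma~\ref{approximation}, and the constants can be tracked to depend only on $\Omega$, $C_{\mathrm{mesh}}$, the polynomial degrees, and $\rho_0$.
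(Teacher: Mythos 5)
Your proposal is correct and follows essentially the same route as the paper: both establish the easy lower bound for $a_h(\tilde{\tau}_h,\tilde{\tau}_h)$ from the sign and scaling of $\textsf{b},\textsf{c},\textsf{d}$, and both recover the missing term $\|d_h^-\tau_h\|^2$ by testing the kernel condition with $v_h=d^-\tau_h^c$, where $\tau_h^c$ is the conforming companion from Lemma~\ref{approximation}, so that $d_hv_h=0$ and $\lr{\tr v_h}=0$ leave only the single face coupling controlled by $\|h^{-1/2}\lr{\tr\tau_h}\|_{\mathcal{E}_h^o}$. The only cosmetic difference is the algebraic splitting of $\|d_h^-\tau_h\|^2$ before applying Young's inequality, which does not change the argument.
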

\begin{proof}
Given $\tilde{\tau}_h=(\tau_h,\eta_h,\check{\eta}^\star_h)\in\mathcal{N}_h$, the definition of $a_h$ implies
\begin{equation}\label{ahlower}
    a_h(\tilde{\tau}_h,\tilde{\tau}_h)\gtrsim\|\tau_h\|^2+\|\eta_h\|^2+\rho^{-1}\|h^{-\frac{1}{2}}\lr{\tr\tau_h}\|^2_{\mathcal{E}^o_h}+\rho\|h^\frac{1}{2}\check{\eta}^\star_h\|_{\mathcal{E}_h^o}^2.
\end{equation}
For any $v_h\in\mathcal{V}_h$,
the membership of $\mathcal{N}_h$ yields
\begin{equation}\label{Nh}
    \begin{aligned}
&(d_h^-\tau_h,v_h)+(\eta_h,d_hv_h)-\langle\bar{\star}\lr{\tr\tau_h},\db{\tr\star v_h}\rangle\\
       &\quad-\langle\bar{\star}\lr{\tr v_h},\db{\tr\star\eta_h}\rangle-\langle\bar{\star}\lr{\tr v_h},\check{\eta}^\star_h\rangle=0.
\end{aligned}
\end{equation}
We take $\tau_h^c\in V^-_h$ in Lemma \ref{approximation} such that 
\begin{equation}\label{approxtau}
    \|\tau_h-\tau_h^c\|+\|d_h(\tau_h-\tau_h^c)\|\leq C_{\rho_0}\rho^{-\frac{1}{2}}\|h^{-\frac{1}{2}}\lr{\tr\tau_h}\|_{\mathcal{E}^o_h}.
\end{equation}
It follows from \eqref{Nh} with $v_h=d^-\tau^c_h$ and \eqref{approxtau}, \eqref{trace} that
\begin{align*}
    &(d_h^-\tau_h,d^-\tau^c_h)=\ab{\bar{\star}\lr{\tr\tau_h}}{\db{\tr\star d^-\tau_h^c}}\lesssim\|h^{-\frac{1}{2}}\lr{\tr\tau_h}\|_{\mathcal{E}^o_h}\|d^-\tau_h^c\|\\
    &\quad\leq C_{\rho_0}\rho^{-\frac{1}{2}}\|h^{-\frac{1}{2}}\lr{\tr\tau_h}\|_{\mathcal{E}^o_h}\big(\|d_h^-\tau_h\|+\|d_h^-(\tau_h-\tau_h^c)\|\big)\\
    &\quad\leq\frac{1}{4}\|d_h^-\tau_h\|^2+ C_{\rho_0}\rho^{-1}\|h^{-\frac{1}{2}}\lr{\tr\tau_h}\|^2_{\mathcal{E}^o_h}.
\end{align*}
Using the previous estimate, the elementary inequality 
\begin{align*}
    &\|d_h^-\tau_h\|^2\leq2(d_h^-\tau_h,d^-\tau^c_h)+\|d_h^-(\tau_h-\tau^c_h)\|^2,
\end{align*}
and \eqref{approxtau}, we have
\begin{align*}
    \|d_h^-\tau_h\|^2\leq C_{\rho_0}\rho^{-1}\|h^{-\frac{1}{2}}\lr{\tr\tau_h}\|^2_{\mathcal{E}^o_h}.
\end{align*}
Combining it with \eqref{ahlower} eventually proves the coercivity.
\end{proof}

Introducing the discrete bilinear form
\begin{align*}
\tilde{B}_h(\tilde{\sigma}_h,u_h;\tilde{\tau}_h,v_h)=a_h(\tilde{\sigma}_h;\tilde{\tau}_h)+b_h(\tilde{\sigma}_h,v_h)+b_h(\tilde{\tau}_h,u_h)+c_h(u_h,v_h),
\end{align*}
the reduced XG method \eqref{XGreduced} could be recast into 
\begin{equation}
    \tilde{B}_h(\tilde{\sigma}_h,u_h;\tilde{\tau}_h,v_h)=-(f,v_h),\quad\forall  (\tilde{\tau}_h,v_h)\in\Sigma_h\times\mathcal{V}_h.
\end{equation}
Now we are in a position to present the first main result.
\begin{theorem}\label{discreteinfsup}
Let $\rho_0>0$ be a constant, $\rho\in(0,\rho_0]$, and \begin{equation*}
    \textsf{a}\simeq-\rho h,\quad\textsf{b}\simeq\rho^{-1}h^{-1},\quad\textsf{c}\simeq\rho h,\quad\textsf{d}\simeq-\rho^{-1}h^{-1},
\end{equation*} 
in \eqref{check}. It holds that for $\tilde{\sigma}_h, \tilde{\tau}_h\in\Sigma_h$, $u_h, v_h\in\mathcal{V}_h,$
\begin{equation}\label{bdd}
    \tilde{B}_h(\tilde{\sigma}_h,u_h;\tilde{\tau}_h,v_h)\leq C_{\emph{bd}}\vertiii{(\tilde{\sigma}_h,u_h)}_{\rho ,h}\vertiii{(\tilde{\tau}_h,v_h)}_{\rho ,h},
\end{equation}
where $C_{\emph{bd}}>0$ is a constant dependent on $\Omega$, $C_{\emph{mesh}},$ $\{r_i\}_{i=1}^7.$
In addition, let the assumptions \eqref{CheckAssumption} and \eqref{Assumption2} hold and $d_h\mathcal{V}_h\subseteq\mathcal{V}_h^+$. Then there exists a constant $C_{\emph{stab}}$ relying on $\Omega$, $C_{\emph{mesh}}$, $\rho_0$, $\{r_i\}_{i=1}^7,$ such that
\begin{equation*}
    \inf_{\substack{(\tilde{\tau}_h,v_h)\in\Sigma_h\times\mathcal{V}_h\\(\tilde{\tau}_h,v_h)\neq0}}\sup_{\substack{(\tilde{\sigma}_h,u_h)\in\Sigma_h\times\mathcal{V}_h\\(\tilde{\sigma}_h,u_h)\neq0}}\frac{\tilde{B}_h(\tilde{\sigma}_h,u_h;\tilde{\tau}_h,v_h)}{\vertiii{(\tilde{\sigma}_h,u_h)}_{\rho,h}\vertiii{(\tilde{\tau}_h,v_h)}_{\rho,h}}\geq C_\emph{stab}>0,
\end{equation*}
\end{theorem}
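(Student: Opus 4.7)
The estimate \eqref{bdd} follows from termwise Cauchy--Schwarz combined with the prescribed scalings $|\textsf{a}|,|\textsf{b}^{-1}|,|\textsf{c}|\simeq\rho h$ and $|\textsf{d}|\simeq\rho^{-1}h^{-1}$: the four pure stabilization pairings in $a_h$ and $c_h$ are absorbed directly into the matching $\rho\|h^{1/2}\cdot\|^2$ and $\rho^{-1}\|h^{-1/2}\cdot\|^2$ contributions to $\vertiii{(\tilde\tau_h,v_h)}_{\rho,h}^2$. For the mixed coupling terms of $b_h$, such as $\langle\bar\star\lr{\tr\sigma_h},\db{\tr\star v_h}\rangle$ or $\langle\bar\star\lr{\tr v_h},\check\xi_h^\star\rangle$, I would use the discrete trace estimate \eqref{trace} to convert $\db{\tr\star v_h}$ into an $\|h^{-1/2}v_h\|_{\mathcal{T}_h}$ bound, which pairs with $\|h^{-1/2}\lr{\tr\sigma_h}\|$ and produces a factor $\rho^{1/2}\le\rho_0^{1/2}$ times the product of triple-bar norms.

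\textbf{Inf-sup: strategy.} The reduced problem \eqref{XGreduced} is a perturbed saddle-point system in which three ingredients are already in place uniformly in $\rho\in(0,\rho_0]$: Lemma \ref{lemmabh} supplies the inf-sup of $b_h$; Lemma \ref{lemmaah} supplies the coercivity of $a_h$ on $\mathcal{N}_h=\ker b_h$; and the sign choice $\textsf{a}\simeq-\rho h\le 0$ makes the perturbation negative semi-definite, i.e., $-c_h(u_h,u_h)\ge 0$. By the classical Babu\v{s}ka--Brezzi theory for such systems (the block-wise approach advertised in the introduction), these three ingredients plus the boundedness above imply the desired inf-sup with a constant depending only on $\Omega$, $C_{\mathrm{mesh}}$, $\rho_0$ and the polynomial degrees.

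\textbf{Inf-sup: explicit construction.} To make the Brezzi argument concrete I would build the test pair as follows. Given $(\tilde\sigma_h,u_h)$, Lemma \ref{lemmabh} yields $\tilde\tau_h^u\in\Sigma_h$ with $b_h(\tilde\tau_h^u,u_h)\gtrsim\vertiii{u_h}_{\rho,h}^2$ and $\vertiii{\tilde\tau_h^u}_{\rho,h}\lesssim\vertiii{u_h}_{\rho,h}$. Next, decompose $\tilde\sigma_h=\tilde\sigma_h^0+\tilde\sigma_h^1$ with $\tilde\sigma_h^0\in\mathcal{N}_h$ and $\tilde\sigma_h^1$ in its $\vertiii{\cdot}_{\rho,h}$-orthogonal complement; the dual form of the $b_h$ inf-sup (closed-range consequence of Lemma \ref{lemmabh}) produces $v_h^\sigma\in\mathcal{V}_h$ with $b_h(\tilde\sigma_h,v_h^\sigma)=b_h(\tilde\sigma_h^1,v_h^\sigma)\gtrsim\vertiii{\tilde\sigma_h^1}_{\rho,h}^2$ and $\vertiii{v_h^\sigma}_{\rho,h}\lesssim\vertiii{\tilde\sigma_h^1}_{\rho,h}$. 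Now test $\tilde B_h$ against $(\tilde\tau_h,v_h)=(\tilde\sigma_h+\delta_1\tilde\tau_h^u,\,-u_h+\delta_2v_h^\sigma)$ with small constants $\delta_1,\delta_2>0$. The cancellation $b_h(\tilde\sigma_h,-u_h)+b_h(\tilde\sigma_h,u_h)=0$ and the identity $c_h(u_h,-u_h)=-c_h(u_h,u_h)$ reduce the expansion to the diagonal piece $a_h(\tilde\sigma_h,\tilde\sigma_h)-c_h(u_h,u_h)$ plus perturbations whose dominant contributions are $\delta_1 b_h(\tilde\tau_h^u,u_h)$ and $\delta_2 b_h(\tilde\sigma_h^1,v_h^\sigma)$. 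Invoking Lemma \ref{lemmaah} on $\tilde\sigma_h^0$, non-negativity of $-c_h(u_h,u_h)$, and Young's inequality to absorb the cross couplings $a_h(\tilde\sigma_h,\tilde\tau_h^u)$ and $c_h(u_h,v_h^\sigma)$ into the diagonal and the perturbations yields $\tilde B_h(\tilde\sigma_h,u_h;\tilde\tau_h,v_h)\gtrsim\vertiii{(\tilde\sigma_h,u_h)}_{\rho,h}^2$, while $\vertiii{(\tilde\tau_h,v_h)}_{\rho,h}\lesssim\vertiii{(\tilde\sigma_h,u_h)}_{\rho,h}$ follows from the triangle inequality.

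\textbf{Main obstacle.} The delicate point is that $a_h(\tilde\sigma_h,\tilde\sigma_h)$ controls $\|\sigma_h\|^2$, $\|\xi_h\|^2$, and the three stabilized jump/check norms, but \emph{not} the $\|d_h^-\sigma_h\|^2$ piece of $\vertiii{\tilde\sigma_h}_{\rho,h}$ for general $\tilde\sigma_h\notin\mathcal{N}_h$. Recovering this missing component requires the reverse-direction inf-sup of $b_h$ on the kernel complement---the role of $v_h^\sigma$ above---and the two-parameter balance $(\delta_1,\delta_2)$ must be tuned so that all cross terms absorbed by Young's inequality remain controlled by quantities uniform in $\rho$, ensuring that the final constant depends only on $\rho_0$ and not on $\rho\to 0^+$. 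This parameter bookkeeping, together with the $\vertiii{\cdot}_{\rho,h}$-orthogonal decomposition of $\tilde\sigma_h$ against $\mathcal{N}_h$, is the technical core of the argument.
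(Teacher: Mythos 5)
Your proposal is correct and follows essentially the same route as the paper: the paper's proof consists of exactly the three ingredients you identify (boundedness via \eqref{trace} and Cauchy--Schwarz, the inf-sup of $b_h$ from Lemma \ref{lemmabh}, coercivity of $a_h$ on $\mathcal{N}_h$ from Lemma \ref{lemmaah}, together with the sign condition $\textsf{a}\le 0$), and then simply cites Theorem 5.5.1 of \cite{BoffiBrezziFortin2013} for perturbed saddle point systems. Your explicit test-function construction and the kernel/kernel-complement decomposition are just the standard proof of that cited abstract theorem spelled out, so there is no substantive difference.
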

\begin{proof}
The boundedness of $\tilde{B}_h$ is a direct consequence of the trace inequality \eqref{trace} and the Cauchy--Schwarz inequality. The inf-sup condition of $\tilde{B}_h$ follows from Lemmas  \ref{lemmabh} and \ref{lemmaah} and $\textsf{a}\leq0,$ see, e.g., Theorem 5.5.1 in \cite{BoffiBrezziFortin2013} for the theory of inf-sup condition of perturbed saddle point systems.
\end{proof}

Theorem \ref{discreteinfsup} also implies the stability of equivalent XG schemes \eqref{XGlocal} and \eqref{XGcompact}. Next we present a priori error estimates of the XG method. 
\begin{corollary}
Let the assumptions in Theorem \ref{discreteinfsup} hold. Then we have
\begin{equation}\label{optimal}
\begin{aligned}
        &\vertiii{\sigma-\sigma_h}_{\rho,h}+\|\xi-\xi_h\|+\vertiii{u-u_h}_{\rho,h}\\
        &\leq C_{\rho_0}\inf_{\tau_h\in\mathcal{V}_h^-,\xi_h\in\mathcal{V}^+_h,v_h\in\mathcal{V}_h}\big(\vertiii{\sigma-\tau_h}_{\rho,h}+\|\xi-\eta_h\|+\vertiii{u-v_h}_{\rho,h}\big).
\end{aligned}
\end{equation}
In addition, assume that
\begin{itemize}
    \item  $\mathcal{P}_{r+1}^-\Lambda^{k-1}(\mathcal{T}_h)\subseteq\mathcal{V}_h^-$, $\mathcal{P}^-_{r+1}\Lambda^{k}(\mathcal{T}_h)\subseteq\mathcal{V}_h$, $\tilde{\mathcal{P}}_r\Lambda^{k+1}(\mathcal{T}_h)\subseteq\mathcal{V}_h^+$,
    \item $u, d^-\sigma\in H^s\Lambda^k(\Omega),$ $\sigma\in H^s\Lambda^{k-1}(\Omega),$ $\xi\in H^s\Lambda^{k+1}(\Omega)$ for an index $s>0$.
\end{itemize}
Then we have for $0< t\leq\min(r+1,s)$,
\begin{equation}\label{rate}
    \begin{aligned}
    &\vertiii{\sigma-\sigma_h}_{\rho,h}+\|\xi-\xi_h\|+\vertiii{u-u_h}_{\rho,h}\\
    &\quad\leq C_{\rho_0} h^t\big(|\sigma|_{H^t\Lambda^{k-1}(\Omega)}+|d^-\sigma|_{H^t\Lambda^k(\Omega)}+|\xi|_{H^t\Lambda^{k+1}(\Omega)}+|u|_{H^t\Lambda^k(\Omega)}\big).
\end{aligned}
\end{equation}
\end{corollary}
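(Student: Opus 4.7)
My plan is to reduce the corollary to two ingredients established earlier: the consistency of the XG formulation (Remark~\ref{consistency}) and the stability/boundedness from Theorem~\ref{discreteinfsup}. The quasi-optimal estimate \eqref{optimal} will follow from a Babu\v{s}ka-type argument applied to the four-field reduced system \eqref{XGreduced}, and \eqref{rate} will then be obtained by inserting canonical Arnold--Falk--Winther (AFW) interpolants into the right-hand infimum and invoking standard approximation estimates.

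The first step is a Galerkin orthogonality for \eqref{XGreduced}. By Remark~\ref{consistency}, setting all check variables to zero and replacing $(\sigma_h,\xi_h,u_h)$ by the exact solution $(\sigma,\xi,u)$ makes every equation in \eqref{XGlocal} hold on each element. Summing over $T\in\mathcal{T}_h$ and eliminating $\check{\sigma},\check{u},\check{u}^\star$ via \eqref{check} exactly as in the derivation of \eqref{XGreduced0}, the triple $\tilde{\sigma}:=(\sigma,\xi,0)$ together with $u$ satisfies
\[
\tilde{B}_h(\tilde{\sigma}, u;\, \tilde{\tau}_h, v_h) = -(f, v_h),\qquad\forall (\tilde{\tau}_h, v_h)\in \Sigma_h \times \mathcal{V}_h.
\]
A key check is that the jump terms involving $\sigma,u,\xi$ on the exact side vanish automatically: $\sigma\in H\Lambda^{k-1}(\Omega)$ and $u\in H\Lambda^k(\Omega)$ force $\lr{\tr\sigma}=\lr{\tr u}=0$, while $u\in\mathring{H}^*\Lambda^k(\Omega)$ and $\xi\in\mathring{H}^*\Lambda^{k+1}(\Omega)$ force $\lr{\tr\star u}=\lr{\tr\star\xi}=0$; the strong equation $d^-\sigma+\delta^+\xi=-f$ from \eqref{1storder} then matches $-(f,v_h)$ after \eqref{IntegrationByParts2}. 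Subtracting this identity from the discrete equation yields
\[
\tilde{B}_h(\tilde{\sigma}-\tilde{\sigma}_h,\, u-u_h;\, \tilde{\tau}_h, v_h) = 0,\qquad\forall (\tilde{\tau}_h, v_h)\in\Sigma_h\times\mathcal{V}_h.
\]

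For quasi-optimality \eqref{optimal}, I would pick an arbitrary discrete pair $(\tilde{\tau}_h^\ast, v_h^\ast)\in\Sigma_h\times\mathcal{V}_h$, apply the inf-sup bound of Theorem~\ref{discreteinfsup} to $(\tilde{\sigma}_h - \tilde{\tau}_h^\ast, u_h - v_h^\ast)$, use Galerkin orthogonality to replace the first argument of $\tilde{B}_h$ by $(\tilde{\sigma}-\tilde{\tau}_h^\ast, u-v_h^\ast)$, and invoke the boundedness \eqref{bdd} to conclude
\[
\vertiii{(\tilde{\sigma}_h-\tilde{\tau}_h^\ast,\, u_h-v_h^\ast)}_{\rho,h} \leq \tfrac{C_{\mathrm{bd}}}{C_{\mathrm{stab}}}\, \vertiii{(\tilde{\sigma}-\tilde{\tau}_h^\ast,\, u-v_h^\ast)}_{\rho,h}.
\]
A triangle inequality and the infimum over $(\tilde{\tau}_h^\ast, v_h^\ast)$---taking the trivial choice $\check{\eta}_h^\star=0$ in the check slot---then yield \eqref{optimal}, after noting that the jump pieces of $\vertiii{\sigma-\tau_h^\ast}_{\rho,h}$ and $\vertiii{u-v_h^\ast}_{\rho,h}$ reduce to those of the discrete part alone because the exact contributions are conforming.

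Finally, for the rate \eqref{rate}, I would insert the canonical AFW commuting interpolants $\pi_h^-\sigma\in V_h^-\subseteq\mathcal{V}_h^-$ and $\pi_h u\in V_h\subseteq\mathcal{V}_h$, together with the $L^2$-projection of $\xi$ onto $\mathcal{V}_h^+$. Because $\pi_h^-\sigma$ and $\pi_h u$ lie in the conforming subspaces, the jump portions of $\vertiii{\sigma-\pi_h^-\sigma}_{\rho,h}$ and $\vertiii{u-\pi_h u}_{\rho,h}$ vanish identically, and the standard AFW Bramble--Hilbert estimates give the $O(h^t)$ bound involving $|\sigma|_{H^t}$, $|d^-\sigma|_{H^t}$, $|u|_{H^t}$, and $|du|_{H^t}=|\xi|_{H^t}$ (this last identity is what lets us trade $|du|_{H^t}$ for $|\xi|_{H^t}$ via $\xi=-du$ and the commuting diagram). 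I expect the main---though mild---obstacle to be the bookkeeping that confirms every jump and check component of $\vertiii{\cdot}_{\rho,h}$ genuinely vanishes on the exact solution, so that the Babu\v{s}ka-type bound collapses cleanly onto the infimum appearing in \eqref{optimal}; once this is handled, the remainder is classical saddle-point theory combined with the well-developed AFW approximation theory.
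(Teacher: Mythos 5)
Your proposal follows essentially the same route as the paper: consistency (Remark~\ref{consistency}) plus the inf-sup and boundedness of $\tilde{B}_h$ from Theorem~\ref{discreteinfsup} give the quasi-optimal bound \eqref{optimal} by the standard Babu\v{s}ka/Xu--Zikatanov argument, and the rate \eqref{rate} follows by inserting commuting projections into the conforming subspaces (so the jump terms in $\vertiii{\cdot}_{\rho,h}$ vanish) together with an $L^2$-projection for $\xi$. One technical caveat in your last step: you propose the \emph{canonical} AFW interpolants, but these are defined through moments over subsimplices and are not well defined on $H^s\Lambda^k(\Omega)$ for the low regularity $s\in(0,\tfrac12]$ permitted by the hypothesis $s>0$; the paper instead uses the \emph{smoothed} commuting projections $\pi_h^-$, $\pi_h$ of \cite{ArnoldFalkWinther2006}, which are $L^2$-bounded and commute with $d$, so the bound $\|d(u-\pi_h u)\|\lesssim h^t|du|_{H^t}=h^t|\xi|_{H^t}$ (and similarly for $\sigma$) holds for all $s>0$. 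With that substitution your argument is complete.
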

\begin{proof}
Let $\tilde{\sigma}=(\sigma,\xi,0)\in V^-\times W^+\times L^2\Lambda^{n-k-1}(\mathcal{E}_h).$
It follows from the inf-sup condition in Theorem \ref{discreteinfsup} and the consistency in Remark \ref{consistency} that 
\begin{equation*}
    \vertiii{(\tilde{\sigma}-\tilde{\sigma}_h,u-u_h)}_{\rho,h}\leq \frac{C_{\text{bd}}}{C_{\text{stab}}}\inf_{(\tilde{\tau}_h,v_h)\in\Sigma_h\times\mathcal{V}_h}\vertiii{(\tilde{\sigma}-\tilde{\tau}_h,u-v_h)}_{\rho,h},
\end{equation*}
see, e.g., \cite{XuZikatanov2003}.
As a result, we obtain \eqref{optimal}.
Let $\pi^-_h: L^2\Lambda^{k-1}(\Omega)\rightarrow V^-_h\subset\mathcal{V}^-_h$, $\pi_h: L^2\Lambda^k(\Omega)\rightarrow V_h\subset\mathcal{V}_h$ be smoothed commuting projections (cf.~\cite{ArnoldFalkWinther2006}) in FEEC, and $Q^+_h$ be the $L^2$-projection onto $\mathcal{V}_h^+$. A direct consequence of \eqref{optimal} is
\begin{equation}\label{rate1}
\begin{aligned}
        &\vertiii{\sigma-\sigma_h}_{\rho,h}+\|\xi-\xi_h\|+\vertiii{u-u_h}_{\rho,h}\\
        &\leq C_{\rho_0}\big(\vertiii{\sigma-\pi^-_h\sigma}_{\rho,h}+\|\xi-Q_h^+\xi\|+\vertiii{u-\pi_hu}_{\rho,h}\big)\\
        &=C_{\rho_0}\big(\|\sigma-\pi^-_h\sigma\|_{V^-}+\|\xi-Q_h^+\xi\|+\|u-\pi_hu\|_V\big).
\end{aligned}
\end{equation}
We complete the proof by \eqref{rate1} and the standard best approximation property of $\pi^-_h$, $\pi_h$, and $Q_h^+.$
\end{proof}
Using \eqref{rate} and the regularity \eqref{regularity}, we have
\begin{align*}
    \vertiii{\sigma-\sigma_h}_{\rho,h}+\|\xi-\xi_h\|+\vertiii{u-u_h}_{\rho,h}=\mathcal{O}(h^{s_0}),\quad s_0\in\left(\frac{1}{2},1\right],
\end{align*}
on arbitrary Lipschitz polyhedron $\Omega.$

\section{Relationship with conforming mixed methods}\label{SectionConnection}

\subsection{An alternative discretization}
The AFW method \eqref{AFW} and the XG method \eqref{XGreduced} with parameters specified in Theorem \ref{discreteinfsup} essentially utilize the conformity 
\begin{equation*}
    u\in H\Lambda^k(\Omega),\quad\sigma=\delta u\in H\Lambda^{k-1}(\Omega).
\end{equation*}
As mentioned in \eqref{conformity2}, it also holds that
\begin{equation}
    u\in\mathring{H}^*\Lambda^{k}(\Omega),\quad\xi=du\in\mathring{H}^*\Lambda^{k+1}(\Omega),
\end{equation}
which suggests an XG method with different parameters in the discrete level. The starting point is the following variational formulation of \eqref{1storder}:
\begin{equation}\label{mixedHodgeLaplace2}
\begin{aligned}
(\xi,\eta)+(\delta^+\eta,u)&=0,\quad&&\eta\in \mathring{H}^*\Lambda^{k+1}(\Omega),\\
(\sigma,\tau)+(\delta u,\tau)&=0,\quad&&\tau\in L^2\Lambda^{k-1}(\Omega),\\
(\delta^+\xi,v)+(\sigma,\delta v)&=-(f,v),&& v\in \mathring{H}^*\Lambda^{k}(\Omega).
\end{aligned}
\end{equation}
In fact \eqref{mixedHodgeLaplace2} is based on the dual complex \eqref{dualdeRham}, which is \emph{isomorphic} to the de Rham complex \eqref{deRhamessential} via the identification \begin{equation*}
\begin{CD}
\mathring{H}\Lambda^{n-k}(\Omega)@>\star>>\mathring{H}^*\Lambda^k(\Omega).
\end{CD}
\end{equation*}
Therefore, \eqref{mixedHodgeLaplace2} is nothing but the variational formulation \eqref{mixedHodgeLaplace} for the Hodge Laplacian with index $n-k$ under essential boundary conditions.

To discretize \eqref{mixedHodgeLaplace2}, we choose 
\begin{equation}
    \mathring{V}_h^{\star+}:=\left\{\begin{aligned}
        &\star\mathring{\mathcal{P}}_{r+1}\Lambda^{n-k-1}(\Th),\\
        &\star\mathring{\mathcal{P}}^-_{r+1}\Lambda^{n-k-1}(\Th)
    \end{aligned}\right\},\quad \mathring{V}^\star_h:=\left\{\begin{aligned}
        &\star\mathring{\mathcal{P}}^-_{r+1}\Lambda^{n-k}(\Th),\\
        &\star\mathring{\mathcal{P}}_{r}\Lambda^{n-k}(\Th)
    \end{aligned}\right\}.
\end{equation}
The conforming mixed method for  \eqref{mixedHodgeLaplace2} is to find $(\mathring{\xi}^c_{h},\mathring{u}^c_{h})\in\mathring{V}_h^{\star+}\times\mathring{V}_h^\star$, such that
\begin{equation}\label{AFW2}
\begin{aligned}
(\mathring{\xi}^c_{h},\eta_h)+(\delta^+\eta_h,\mathring{u}^c_{h})&=0,&&\eta_h\in\mathring{V}_{h}^{\star+},\\
(\delta^+\mathring{\xi}^c_{h},v_h)-(\delta \mathring{u}^c_{h},\delta v_h)&=-(f,v_h),&&v_h\in\mathring{V}_{h}^\star.
\end{aligned}
\end{equation}
The variable $\mathring{\sigma}_h^c$ is directly defined as $\mathring{\sigma}_h^c=-\delta\mathring{u}_h^c.$

We introduce the following notation in a fashion similar to Section \ref{SectionInfsup}. \begin{align*}
&\Sigma^*_h=\CV^-_h\times\check{\CV}_h^-\times\CV_h^+,\\
    &\tilde{\xi}_h=(\sigma_h,\check{\sigma}_h,\xi_h)\in\Sigma^*_h,\quad\tilde{\eta}_h=(\tau_h,\check{\tau}_h,\eta_h)\in\Sigma^*_h,\\
       &a^*_h(\tilde{\xi}_h,\tilde{\eta}_h)=(\sigma_h,\tau_h)+(\xi_h,\eta_h)-\langle{\textsf{d}\lr{\tr\sigma_h}},\lr{\tr\tau_h}\rangle\\
       &\qquad+\langle\textsf{c}\lr{\tr\star\xi_h},\lr{\tr\star\eta_h}\rangle-\ab{\textsf{a}^{-1}\check{\sigma}_h}{\check{\tau}_h},\\
       &b^*_h(\tilde{\xi}_h,v_h)=(\sigma_h,\delta_hv_h)+(\delta_h^+\xi_h,v_h)+\langle\bar{\star}\db{\tr\sigma_h},\lr{\tr\star v_h}\rangle\\
       &\qquad+\langle\bar{\star}\check{\sigma}_h,\lr{\tr\star v_h}\rangle+\langle{\bar{\star}\db{\tr v_h}},\lr{\tr\star\xi_h}\rangle,\\
       &c^*_h(u_h,v_h)=-\langle\textsf{b}\lr{\tr u_h},\lr{\tr v_h}\rangle.
    \end{align*}
Furthermore, given $\eta_h\in\mathcal{V}^+_h$, $v_h\in\mathcal{V}_h$, $\tilde{\eta}_h\in\Sigma^*_h$, and a positive constant $\rho$, we propose the following mesh dependent norms
\begin{align*}
           \vertiii{\eta_h}_{*,\rho,h}^2&:=\|\eta_h\|^2+\|\delta^+_h\eta_h\|^2+\rho^{-1}\|h^{-\frac{1}{2}}\lr{\tr \star\eta_h}\|_{\mathcal{E}_h}^2,\\ \vertiii{v_h}_{*,\rho,h}^2&:=\|v_h\|^2+\|\delta_hv_h\|^2+\rho^{-1}\|h^{-\frac{1}{2}}\lr{\tr\star v_h}\|_{\mathcal{E}_h}^2,\\
    \vertiii{\tilde{\eta}_h}^2_{*,\rho ,h}&:=\vertiii{\eta_h}^2_{*,\rho,h}+\|\tau_h\|^2+\rho\|h^\frac{1}{2}\check{\tau}_h\|_{\mathcal{E}_h}^2,\\
    \vertiii{(\tilde{\eta}_h,v_h)}^2_{*,\rho ,h}&:=\vertiii{\tilde{\eta}_h}^2_{*,\rho ,h}+\|v_h\|_{*,\rho,h}^2.
\end{align*} 
The four-field formulation \eqref{XGreducedII} is to find $(\tilde{\xi}_h,u_h)\in\Sigma_h^*\times\mathcal{V}_h$ such that
\begin{equation}\label{XGreduced2}
    \begin{aligned}
    a^*_h(\tilde{\xi}_h,\tilde{\eta}_h)+b^*_h(\tilde{\eta}_h,u_h)&=0,\quad\tilde{\eta}_h\in\Sigma_h^*,\\
    b^*_h(\tilde{\xi}_h,v_h)+c^*_h(u_h,v_h)&=-(f,v_h),\quad v_h\in\mathcal{V}_h.
    \end{aligned}
\end{equation}
The total bilinear form is defined as
\begin{equation*}
    \tilde{B}^*_h(\tilde{\xi}_h,u_h;\tilde{\eta}_h,v_h)=a^*_h(\tilde{\xi}_h,\tilde{\eta}_h)+b^*_h(\tilde{\xi}_h,v_h)+b^*_h(\tilde{\eta}_h,u_h)+c^*_h(u_h,v_h).
\end{equation*}

The next theorem confirms the well-posedness of \eqref{XGreduced}. Its proof follows from the same line in Lemmas \ref{lemmabh} and \ref{lemmaah} and Theorem \ref{discreteinfsup} except that $a_h$, $b_h$, $c_h$  are replaced with $a^*_h$, $b^*_h$, $c^*_h$. The analysis tools are the dual complex \eqref{dualdeRham} and the discrete dual complex by $(\star\mathring{V}_h,\delta)$.
\begin{theorem}\label{discreteinfsupII}
Let $\rho_0>0$ be a constant, $\rho\in(0,\rho_0]$, and \begin{equation*}
    \textsf{a}\simeq-\rho ^{-1}h^{-1},\quad\textsf{b}\simeq\rho h,\quad\textsf{c}\simeq\rho^{-1} h^{-1},\quad\textsf{d}\simeq-\rho h
\end{equation*}
in \eqref{check}. It holds that for $\tilde{\xi}_h, \tilde{\eta}_h\in\Sigma^*_h$, $u_h, v_h\in\mathcal{V}_h,$
\begin{equation*}
    \tilde{B}^*_h(\tilde{\xi}_h,u_h;\tilde{\eta}_h,v_h)\leq C_{\rho_0}\vertiii{(\tilde{\xi}_h,u_h)}_{*,\rho ,h}\vertiii{(\tilde{\eta}_h,v_h)}_{*,\rho ,h}.
\end{equation*}
In addition, assume $\delta_h\mathcal{V}_h\subseteq\mathcal{V}_h^-$ and
\begin{equation*}
\mathcal{V}_h^{k+1}\cap \mathring{H}^*\Lambda^{k+1}(\Omega)=\star\mathring{V}_h^{n-k-1},\quad\mathcal{V}_h^{k}\cap \mathring{H}^*\Lambda^{k}(\Omega)=\star\mathring{V}_h^{n-k}.
\end{equation*}
Then we have
\begin{equation*}
    \inf_{\substack{(\tilde{\xi}_h,u_h)\in\Sigma^*_h\times\mathcal{V}_h\\(\tilde{\xi}_h,u_h)\neq0}}\sup_{\substack{(\tilde{\eta}_h,v_h)\in\Sigma^*_h\times\mathcal{V}_h\\(\tilde{\eta}_h,v_h)\neq0}}\frac{\tilde{B}^*_h(\tilde{\xi}_h,u_h;\tilde{\eta}_h,v_h)}{\vertiii{(\tilde{\xi}_h,u_h)}_{*,\rho,h}\vertiii{(\tilde{\eta}_h,v_h)}_{*,\rho,h}}= C_{\rho_0}>0.
\end{equation*}
\end{theorem}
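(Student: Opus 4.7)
The plan is to mirror the proof of Theorem \ref{discreteinfsup}, exploiting the fact that the dual complex \eqref{dualdeRham} is isomorphic via $\star$ to the essential-boundary de Rham complex \eqref{deRhamessential}, and the discrete dual complex $(\star\mathring{V}_h,\delta)$ is isomorphic to $(\mathring{V}_h,d)$. Under this swap the roles of $d$ and $\delta$ interchange: $\xi_h\in\mathcal{V}_h^+$ plays the role that $\sigma_h$ did before (its differential $\delta_h^+\xi_h$ is the quantity controlled in the graph norm), while $\sigma_h\in\mathcal{V}_h^-$ plays the role of the auxiliary variable $\eta_h$, and $\check{\sigma}_h$ replaces $\check{\xi}_h^\star$. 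Boundedness of $\tilde{B}_h^*$ is immediate from the trace inequality \eqref{trace} and the Cauchy--Schwarz inequality, exactly as in Theorem \ref{discreteinfsup}, so I focus on the inf-sup condition.

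For the inf-sup condition I would first establish a dual analogue of Lemma \ref{approximation}: for every $\eta_h\in\mathcal{V}_h^+$ and $v_h\in\mathcal{V}_h$ there exist $\eta_h^{\star c}\in\star\mathring{V}_h^{\star+}$ and $v_h^{\star c}\in\star\mathring{V}_h^{\star}$ with
\[
\|h^{-1}(\eta_h-\eta_h^{\star c})\|+\|\delta_h^+(\eta_h-\eta_h^{\star c})\|\lesssim\|h^{-\frac{1}{2}}\lr{\tr\star\eta_h}\|_{\mathcal{E}_h},
\]
and the analogous estimate for $v_h$. Because $\star\mathring{V}_h$-conformity requires vanishing star-trace on $\partial\Omega$, the jump norm is taken over all of $\mathcal{E}_h$ in accordance with the convention $\lr{\tr\star\omega}|_E=\tr_E(\star\omega|_{T_E})$ on boundary faces. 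Then, given $v_h\in\mathcal{V}_h$, I pass to $v_h^{\star c}$ and apply the discrete Hodge decomposition of the dual complex $v_h^{\star c}=\delta^+\xi_h+v_h^\perp$ with $\xi_h\in\star\mathring{V}_h^{\star+}$ and $v_h^\perp$ $L^2$-orthogonal to $R(\delta^+)$, controlled by the dual discrete Poincar\'e inequality obtained from \eqref{discretePoincare} via $\star$. Setting $\tilde{\eta}_h=(\tau_h,\check{\tau}_h,\xi_h)$ with $\tau_h=\delta_hv_h\in\mathcal{V}_h^-$ (using $\delta_h\mathcal{V}_h\subseteq\mathcal{V}_h^-$) and $\check{\tau}_h=t\rho^{-1}h^{-1}\bar{\star}\lr{\tr\star v_h}$ for a suitable $t=t(\rho_0)$, a direct computation of $b_h^*(\tilde{\eta}_h,v_h)$ combined with Young's inequality yields $b_h^*(\tilde{\eta}_h,v_h)\geq C_{\rho_0}\vertiii{v_h}^2_{*,\rho,h}$ along with $\vertiii{\tilde{\eta}_h}_{*,\rho,h}\lesssim\vertiii{v_h}_{*,\rho,h}$, verbatim as in Lemma \ref{lemmabh}.

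For the coercivity of $a_h^*$ on the kernel $\mathcal{N}_h^*$ of $b_h^*$, the direct terms give
\[
a_h^*(\tilde{\eta}_h,\tilde{\eta}_h)\gtrsim\|\tau_h\|^2+\|\eta_h\|^2+\rho\|h^{\frac{1}{2}}\check{\tau}_h\|_{\mathcal{E}_h}^2+\rho^{-1}\|h^{-\frac{1}{2}}\lr{\tr\star\eta_h}\|_{\mathcal{E}_h}^2
\]
from the prescribed magnitudes of $\textsf{a}\simeq-\rho^{-1}h^{-1}$, $\textsf{c}\simeq\rho^{-1}h^{-1}$, $\textsf{d}\simeq-\rho h$; to recover $\|\delta_h^+\eta_h\|^2$ I plug $v_h=\delta^+\eta_h^{\star c}\in\mathcal{V}_h$ into the kernel relation $b_h^*(\tilde{\eta}_h,v_h)=0$ and bound via the trace and Young inequalities, mirroring Lemma \ref{lemmaah} with $d^-$ replaced by $\delta^+$. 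The inf-sup condition of $\tilde{B}_h^*$ then follows from the perturbed saddle-point theory in \cite[Theorem 5.5.1]{BoffiBrezziFortin2013}, using that $c_h^*(v_h,v_h)=-\langle\textsf{b}\lr{\tr v_h},\lr{\tr v_h}\rangle\leq 0$ for $\textsf{b}\simeq\rho h>0$. The main technical obstacle is verifying the dual approximation lemma; depending on whether the construction in the appendix is sufficiently $\star$-covariant (as is the case for averaging of degrees of freedom) one either applies $\star$ directly to its statement or rebuilds the averaging with $\star\mathring{V}_h$ as the conforming target, and careful bookkeeping of boundary faces is required because the target of conformity now carries essential boundary conditions.
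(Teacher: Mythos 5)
Your proposal is correct and follows essentially the same route as the paper, which itself only states that the result "follows the same line" as Lemmas \ref{lemmabh}--\ref{lemmaah} and Theorem \ref{discreteinfsup} with the dual complex \eqref{dualdeRham} and the discrete dual complex $(\star\mathring{V}_h,\delta)$ as the analysis tools. You in fact supply more detail than the paper does, correctly isolating the one genuinely new ingredient --- a version of Lemma \ref{approximation} targeting the essential-boundary conforming spaces $\star\mathring{V}_h$, with the jump norm taken over all of $\mathcal{E}_h$ including boundary faces --- which the paper leaves implicit.
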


\subsection{Limiting Cases}
The XG method is a departure from the AFW conforming mixed methods \eqref{AFW} and \eqref{AFW2}. In this subsection, we show that the XG solution $(\sigma_h,\xi_h,u_h)$ converges to the conforming solution $(\sigma^c_h,\xi^c_h,u^c_h)$ or $(\mathring{\sigma}^c_h,\mathring{\xi}^c_h,\mathring{u}^c_h)$ when passing penalty parameters to suitable limits. A key ingredient in the analysis is the stability of the AFW mixed method \eqref{AFW}
\begin{equation}\label{AFWstability}
\|\sigma^c_{h}\|_{V^-}+\|\xi^c_{h}\|+\|u_h^c\|_V\lesssim\|f\|.
\end{equation}
\begin{theorem}\label{theoremlimit}
Let the assumptions in Theorem \ref{discreteinfsup} hold. Then we have 
\begin{equation}\label{limitI}
    \|\sigma_h-\sigma_h^c\|+\|d_h^-(\sigma_h-\sigma_h^c)\|+\|u_h-u_h^c\|+\|d_h(u_h-u_h^c)\|\lesssim\rho^{\frac{1}{2}}\|f\|.
\end{equation}
Let the assumptions in Theorem \ref{discreteinfsupII} hold. Then we have 
\begin{equation}\label{limitII}
    \|\xi_h-\mathring{\xi}_h^c\|+\|\delta_h^+(\xi_h-\mathring{\xi}_h^c)\|+\|u_h-\mathring{u}_h^c\|+\|\delta_h(u_h-\mathring{u}_h^c)\|\lesssim\rho^{\frac{1}{2}}\|f\|.
\end{equation}
\end{theorem}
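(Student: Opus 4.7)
The approach is a Strang-type argument driven by the discrete inf-sup condition of Theorem \ref{discreteinfsup}. The first step is to lift the AFW pair $(\sigma_h^c, u_h^c)$ to a tuple in the XG trial space by setting $\xi_h^c := -du_h^c \in \mathcal{V}_h^+$ (using $u_h^c \in V_h \subseteq \mathcal{V}_h$ and $d_h\mathcal{V}_h \subseteq \mathcal{V}_h^+$) and $\check{\xi}_h^{c,\star} := 0$, giving $\tilde{\sigma}_h^c := (\sigma_h^c, \xi_h^c, 0) \in \Sigma_h$. Since the XG solution satisfies $\tilde{B}_h(\tilde{\sigma}_h, u_h; \tilde{\tau}_h, v_h) = -(f, v_h)$, Theorem \ref{discreteinfsup} yields
\begin{equation*}
\vertiii{(\tilde{\sigma}_h - \tilde{\sigma}_h^c,\, u_h - u_h^c)}_{\rho, h} \leq C_{\text{stab}}^{-1} \sup_{(\tilde{\tau}_h, v_h) \neq 0} \frac{R(\tilde{\tau}_h, v_h)}{\vertiii{(\tilde{\tau}_h, v_h)}_{\rho, h}},
\end{equation*}
where $R(\tilde{\tau}_h, v_h) := -(f, v_h) - \tilde{B}_h(\tilde{\sigma}_h^c, u_h^c; \tilde{\tau}_h, v_h)$. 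Since $\vertiii{\cdot}_{\rho,h}$ dominates each of the four $L^2$-type norms appearing on the LHS of \eqref{limitI}, it suffices to prove $R \lesssim \rho^{1/2}\|f\|\,\vertiii{(\tilde{\tau}_h, v_h)}_{\rho,h}$.

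The second step is the explicit calculation of $R$. Plugging $\tilde{\sigma}_h^c$ into $\tilde{B}_h$, the jump-freeness $\lr{\tr\sigma_h^c} = \lr{\tr u_h^c} = 0$ on interior faces annihilates every penalty contribution weighted by $\textsf{b}$ or $\textsf{d}$, while the pairs $(-du_h^c, \eta_h)$ and $(\eta_h, du_h^c)$ from $a_h$ and $b_h(\tilde{\tau}_h, u_h^c)$ cancel. Splitting the test functions as $\tau_h = \tau_h^c + (\tau_h - \tau_h^c)$ and $v_h = v_h^c + (v_h - v_h^c)$ via Lemma \ref{approximation}, invoking the AFW equations \eqref{AFW} on the conforming pieces, and applying the element-wise Stokes formula \eqref{IntegrationByParts2} together with the identity \eqref{meanjump} reduces $R$ to: (I) the penalty terms $\langle\textsf{a}\lr{\tr\star u_h^c}, \lr{\tr\star v_h}\rangle$ and $\langle\textsf{c}\lr{\tr\star du_h^c}, \lr{\tr\star\eta_h}\rangle$, and (II) consistency terms of the form $\langle\bar{\star}\db{\tr(\tau_h - \tau_h^c)}, \lr{\tr\star u_h^c}\rangle$ and $\langle\bar{\star}\lr{\tr(v_h - v_h^c)}, \db{\tr\star du_h^c}\rangle$.

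The third step bounds each residual piece using the trace inequality \eqref{trace}, Lemma \ref{approximation}, the scalings $|\textsf{a}|, |\textsf{c}| \simeq \rho h$, and the AFW stability \eqref{AFWstability}. For the penalty terms in (I), Cauchy--Schwarz produces $(\rho h)^{1/2}\|\lr{\tr\star u_h^c}\|_{\mathcal{E}_h}(\rho h)^{1/2}\|\lr{\tr\star v_h}\|_{\mathcal{E}_h}$; the trace inequality converts $h^{1/2}\|\lr{\tr\star u_h^c}\|_{\mathcal{E}_h} \lesssim \|u_h^c\| \lesssim \|f\|$, yielding a net bound of order $\rho\|f\|\vertiii{v_h}_{\rho,h}$ (more than enough). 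For the consistency terms in (II), Lemma \ref{approximation} converts $\|\db{\tr(\tau_h - \tau_h^c)}\|_{\mathcal{E}_h}$ into a multiple of $\|h^{-1/2}\lr{\tr\tau_h}\|_{\mathcal{E}_h^o} \lesssim \rho^{1/2}\vertiii{\tilde{\tau}_h}_{\rho,h}$, while the trace inequality bounds $\|\lr{\tr\star u_h^c}\|_{\mathcal{E}_h}$ by a constant times $\|f\|$, producing the required $\rho^{1/2}\|f\|\,\vertiii{\tilde{\tau}_h}_{\rho,h}$ factor. Combining all contributions proves \eqref{limitI}.

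For \eqref{limitII}, an analogous argument uses Theorem \ref{discreteinfsupII} and the dual AFW method \eqref{AFW2}. The extension is $\tilde{\xi}_h^c := (-\delta \mathring{u}_h^c, 0, \mathring{\xi}_h^c) \in \Sigma_h^*$; now the essential boundary conformity $\mathring{u}_h^c \in \mathring{H}^*\Lambda^k(\Omega)$, $\mathring{\xi}_h^c \in \mathring{H}^*\Lambda^{k+1}(\Omega)$ implies $\lr{\tr\star \mathring{u}_h^c} = \lr{\tr\star \mathring{\xi}_h^c} = 0$, killing penalty terms now weighted by $|\textsf{a}|, |\textsf{c}| \simeq \rho^{-1}h^{-1}$, while the roles of $d$ and $\delta$ and of complexes \eqref{deRham} and \eqref{dualdeRham} are interchanged. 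The main obstacle is the residual bookkeeping in the second step: every term produced by element-wise integration by parts must be verified to carry at least a factor $\rho^{1/2}$ after pairing with the AFW stability bound, so that the penalty scaling does not erase the convergence rate. In particular, care is needed with the term $(\sigma_h^c + \delta_h u_h^c, \tau_h - \tau_h^c)$ produced by the integration by parts in $R$, which must be handled by re-applying the AFW first equation to extract the compensating boundary contribution $\langle\bar{\star}\db{\tr\tau_h^c}, \lr{\tr\star u_h^c}\rangle$ before bounding.
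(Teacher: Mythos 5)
Your proposal is correct and follows essentially the same route as the paper: both arguments are Strang-type estimates that invoke the discrete inf-sup condition of Theorem \ref{discreteinfsup}, lift the AFW solution via $\xi_h^c=-du_h^c$, split the test functions with the conforming approximants of Lemma \ref{approximation} so that the AFW equations \eqref{AFW} absorb the conforming parts of the residual, and then bound the remaining penalty and consistency terms by the trace inequality \eqref{trace} and the stability bound \eqref{AFWstability} to extract the $\rho^{1/2}$ factor. The paper phrases this as an error equation for $(\sigma_h-\sigma_h^c,\,\xi_h+du_h^c,\,u_h-u_h^c)$ in the three-field form \eqref{XGcompact} rather than as a residual functional for $\tilde B_h$, but the two formulations are equivalent.
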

\begin{proof}
We focus on \eqref{limitI} and the proof of \eqref{limitII} is similar. Let $\delta_\sigma=\sigma_h-\sigma_h^c\in\mathcal{V}_h^-$, $\delta_u=u_h-u_h^c\in\mathcal{V}_h$, $\delta_\xi=\xi_h+du_h^c\in\mathcal{V}_h^+$.
Rewriting the three-field formulation \eqref{XGcompact} leads to
\begin{equation*}
    \begin{aligned}
       &(\delta_\sigma,\tau_h)+(d^-_h\tau_h,\delta_u)-\langle{\bar{\star}\lr{\tr\tau_h}},\db{\tr\delta_u}\rangle-\langle{\textsf{d}\lr{\tr\delta_\sigma}},\lr{\tr\tau_h}\rangle\\
       &\qquad=-(\sigma_h^c,\tau_h)-(d^-_h\tau_h,u_h^c)-\langle{\bar{\star}\lr{\tr\tau_h}},\db{\tr\star{u}^c_h}\rangle,\\
       &(\delta_\xi,\eta_h)+(\eta_h,\delta_u)+\langle\textsf{c}\lr{\tr \star\delta_\xi},\lr{\tr\star\eta_h}\rangle-\langle\bar{\star}\lr{\tr \delta_u},\db{\tr\star\eta_h}\rangle\\
       &\qquad=-(\xi_h^c,\eta_h)-(\eta_h,du_h^c)-\langle\textsf{c}\lr{\tr \star\xi^c_h},\lr{\tr\star\eta_h}\rangle.\\
       &(d_h^-\delta_\sigma,v_h)+(\delta_\xi,d_hv_h)+\langle\textsf{a}\lr{\tr\star \delta_u},\lr{\tr\star v_h}\rangle-\langle\bar{\star}\lr{\tr\delta_\sigma},\db{\tr\star v_h}\rangle\\
       &\qquad-\langle{\bar{\star}\lr{\tr v_h}},\db{\tr\star\delta_\xi}\rangle-\langle{\textsf{b}\lr{\tr \delta_u}},\lr{\tr v_h}\rangle=-(f,v_h)-(d_h^-\sigma_h^c,v_h)\\
       &\qquad+(du_h^c,d_hv_h)-\langle\textsf{a}\lr{\tr\star u_h^c},\lr{\tr\star v_h}\rangle+\langle{\bar{\star}\lr{\tr v_h}},\db{\tr\star\xi_h^c}\rangle,
    \end{aligned}
\end{equation*}
for all $(\tau_h,\eta_h,v_h)\in\mathcal{V}_h^-\times\mathcal{V}_h^+\times\mathcal{V}_h$. We take $v_h^c\in V_h$ $\tau_h^c\in V_h^-$ in Lemma \ref{approximation}. It then follows from the previous equations and \eqref{AFW} that
\begin{equation}\label{defect}
    \begin{aligned}
       &(\delta_\sigma,\tau_h)+(d^-_h\tau_h,\delta_u)-\langle{\bar{\star}\lr{\tr\tau_h}},\db{\tr\delta_u}\rangle-\langle{\textsf{d}\lr{\tr\delta_\sigma}},\lr{\tr\tau_h}\rangle\\
       &\qquad=-(\sigma_h^c,\tau_h-\tau_h^c)-(d^-_h(\tau_h-\tau_h^c),u_h^c)-\langle{\bar{\star}\lr{\tr\tau_h}},\db{\tr\star{u}^c_h}\rangle,\\
       &(\delta_\xi,\eta_h)+(\eta_h,\delta_u)+\langle\textsf{c}\lr{\tr \star\delta_\xi},\lr{\tr\star\eta_h}\rangle-\langle\bar{\star}\lr{\tr \delta_u},\db{\tr\star\eta_h}\rangle\\
       &\qquad=-\langle\textsf{c}\lr{\tr \star\xi^c_h},\lr{\tr\star\eta_h}\rangle,\\
       &(d_h^-\delta_\sigma,v_h)+(\delta_\xi,d_hv_h)+\langle\textsf{a}\lr{\tr\star \delta_u},\lr{\tr\star v_h}\rangle-\langle\bar{\star}\lr{\tr\delta_\sigma},\db{\tr\star v_h}\rangle\\
       &\quad-\langle{\bar{\star}\lr{\tr v_h}},\db{\tr\star\delta_\xi}\rangle-\langle{\textsf{b}\lr{\tr \delta_u}},\lr{\tr v_h}\rangle\\
       &\qquad=-(f,v_h-v_h^c)-(d_h^-\sigma_h^c,v_h-v_h^c)+(du_h^c,d_h(v_h-v_h^c))\\
       &\qquad-\langle\textsf{a}\lr{\tr\star u_h^c},\lr{\tr\star v_h}\rangle+\langle{\bar{\star}\lr{\tr v_h}},\db{\tr\star\xi_h^c}\rangle.
    \end{aligned}
\end{equation}
Combining \eqref{defect} with the inf-sup condition of \eqref{XGcompact} (implied by Theorem \ref{discreteinfsup}) and using the Cauchy--Schwarz inequality and \eqref{AFWstability}, \eqref{trace}, we have
\begin{equation*}
    \begin{aligned}
        &\vertiii{\delta_\sigma}_{\rho,h}+\|\delta_\xi\|+\vertiii{\delta_u}_{\rho,h}\\
        &\lesssim\sup_{\substack{(\tau_h,\eta_h,v_h)\in\mathcal{V}_h^-\times\mathcal{V}_h^+\times\mathcal{V}_h\\\vertiii{\tau_h}_{\rho,h}=1,\|\eta_h\|=1,\vertiii{v_h}_{\rho,h}=1}}\left\{\|f\|\big(\|\tau_h-\tau_h^c\|+\|d_h^-(\tau_h-\tau_h^c)\|+\|h^{-\frac{1}{2}}\lr{\tr\tau_h}\|_{\mathcal{E}_h^o}\big)\right.\\
        &\qquad\left.+\rho^\frac{1}{2}\|f\|+\|f\|\big(\|v_h-v_h^c\|+\|d_h(v_h-v_h^c)\|+\|h^{-\frac{1}{2}}\lr{\tr v_h}\|_{\mathcal{E}_h^o}\big)\right\}
    \end{aligned}
\end{equation*}
Then it follows from Lemma \ref{approximation} and a Young's inequality that
\begin{equation*}
    \vertiii{\delta_\sigma}_{\rho,h}+\|\delta_\xi\|+\vertiii{\delta_u}_{\rho,h}\lesssim\rho^\frac{1}{2}\|f\|,
\end{equation*}
which completes the proof.
\end{proof}

As a consequence of the above theorem, with parameters  specified in Theorem \ref{discreteinfsup}, the XG solution $(\sigma_h,\xi_h,u_h)$ converges to $(\sigma^c_h,\xi^c_h,u^c_h)$ as  $\rho\rightarrow0$.
On the other hand, taking parameters in Theorem \ref{discreteinfsupII}, the XG solution $(\sigma_h,\xi_h,u_h)$ is approaching  $(\mathring{\sigma}^c_h,\mathring{\xi}^c_h,\mathring{u}^c_h)$ as $\rho\rightarrow0.$

\section{Hybridization}\label{SectionHybridization}
In this section, we show that the XG method \eqref{XGlocal} with  appropriate check spaces and  parameters $\textsf{a},$ $\textsf{b},$ $\textsf{c},$ $\textsf{d}$ given in Theorem \ref{discreteinfsup} is hybridizable. In this case, solving \eqref{XGlocal} is equivalent to solving a reduced system with a smaller number of degrees of freedom. In particular, the penalty parameters are taken as
\begin{equation}\label{abcd}
    \textsf{c}=(4\textsf{b})^{-1},\quad\textsf{d}=(4\textsf{a})^{-1}.
\end{equation}

\subsection{Local Solvers}
The first step is to express $\sigma_h$, $\xi_h,$ $u_h$ on each local element $T$ in terms of the fluxes $\hat{u}_h^\star$, $\hat{\xi}_h^\star$. Recall the relations in \eqref{check}
\begin{equation}\label{id1}
    \begin{aligned}
    &\bar{\star}\check{u}_h=\textsf{c}\lr{\tr \star\xi_h},\\
    &\bar{\star}\lr{\tr u_h}=\textsf{b}^{-1}\check{\xi}_h^\star=\textsf{b}^{-1}(\hat{\xi}_h^\star-\bar{\xi}_h^\star).
\end{aligned}
\end{equation}
Then using the elementary identities
\begin{align*}
    &\tr u_h-\bar{u}_h=2^{-1}s_T\lr{\tr u_h}\text{ on }\partial T,\\
    &2^{-1}s_T\lr{\tr\star \xi_h}+\bar{\xi}^\star_h=\tr\star \xi_h\text{ on }\partial T,
\end{align*}
and $\hat{u}_h=\check{u}_h+\bar{u}_h,$ \eqref{id1}, \eqref{abcd}, we obtain 
\begin{equation}\label{uhattraceu}
    \begin{aligned}
    &\langle\bar{\star}(\hat{u}_h-\tr u_h),\tr\star\eta_h\rangle^s_{\partial T}\\
    &=\langle s_T(\textsf{c}\lr{\tr \star\xi_h}-2^{-1}s_T\bar{\star}\lr{\tr u_h}),\tr\star\eta_h\rangle_{\partial T}\\
    &=\langle s_T\textsf{c}\lr{\tr \star\xi_h}-(2\textsf{b})^{-1}(\hat{\xi}_h^\star-\bar{\xi}_h^\star),\tr\star\eta_h\rangle_{\partial T}\\
    &=\langle2\textsf{c}(\tr\star{\xi}_h-\hat{\xi}_h^\star),\tr\star\eta_h\rangle_{\partial T}.
    \end{aligned}
\end{equation}
Similarly, it follows from
\begin{align*}
    &\bar{\star}\check{\sigma}_h=\textsf{a}\lr{\tr\star u_h},\\
    &\bar{\star}\lr{\tr\sigma_h}=\textsf{d}^{-1}\check{u}_h^\star=\textsf{d}^{-1}(\hat{u}_h^\star-\bar{u}_h^\star),\\
    &\tr\sigma_h-\bar{\sigma}_h=2^{-1}s_T\lr{\tr \sigma_h}\text{ on }\partial T,\\
    &2^{-1}s_T\lr{\tr\star u_h}+\bar{u}^\star_h=\tr\star u_h\text{ on }\partial T,
\end{align*}
and $\hat{\sigma}_h=\check{\sigma}_h+\bar{\sigma}_h$, \eqref{abcd} that 
\begin{equation}\label{sigmahattracesigma}
    \begin{aligned}
    &\langle\bar{\star}(\hat{\sigma}_h-\tr{\sigma}_h),{\tr\star v_h}\rangle^s_{\partial T}\\
    &=\langle s_T(\textsf{a}\lr{\tr\star u_h}-2^{-1}s_T\bar{\star}\lr{\tr \sigma_h}),\tr\star v_h\rangle_{\partial T}\\
    &=\langle s_T\textsf{a}\lr{\tr\star u_h}-(2\textsf{d})^{-1}(\hat{u}_h^\star-\bar{u}_h^\star),\tr\star v_h\rangle_{\partial T}\\
    &=\langle2\textsf{a}(\tr\star u_h-\hat{u}_h^\star),\tr\star v_h\rangle_{\partial T}.
    \end{aligned}
\end{equation}

Using \eqref{uhattraceu}, \eqref{sigmahattracesigma},  the local problem \eqref{XGlocal2} becomes
\begin{subequations}\label{localsolver0}
    \begin{align}
       &(\sigma_h,\tau_h)_T+(d^-\tau_h,u_h)_T=\langle{\bar{\star}\tr\tau_h},\hat{u}_h^\star\rangle^s_{\partial T},\label{localsolver1}\\
       &(\xi_h,\eta_h)_T+(\eta_h,du_h)_T+\langle2\textsf{c}\tr\star{\xi}_h,\tr\star\eta_h\rangle_{\partial T}=\langle2\textsf{c}\hat{\xi}_h^\star,\tr\star\eta_h\rangle_{\partial T},\label{localsolver2}\\
       &(d^-\sigma_h,v_h)_T+(\xi_h,dv_h)_T+\langle2\textsf{a}\tr\star u_h,\tr\star v_h\rangle_{\partial T}\label{localsolver3}\\
       &\quad=-(f,v_h)_T+\langle{\bar{\star}\tr v_h},\hat{\xi}_h^\star\rangle^s_{\partial T}+\langle2\textsf{a}\hat{u}_h^\star,\tr\star v_h\rangle_{\partial T}.\nonumber
    \end{align}
\end{subequations}
Define the local bilinear forms
\begin{align*}
  a_T(\sigma_T,\xi_T;\tau_T,\eta_T)&=(\sigma_T,\tau_T)_T+(\xi_T,\eta_T)_T+\langle2\textsf{c}\tr\star{\xi}_T,\tr\star\eta_T\rangle_{\partial T},\\
  b_T(\sigma_T,\xi_T;v_T)&=(d^-\sigma_T,v_T)_T+(\xi_T,dv_T)_T,\\
  c_T(u_T,v_T)&=\langle2\textsf{a}\tr\star u_T,\tr\star v_T\rangle_{\partial T}.
\end{align*}
It follows from \eqref{localsolver0} that $(\sigma_h|_T,\xi_h|_T,u_h|_T)\in\mathcal{V}^-_h|_T\times\mathcal{V}^+_h|_T\times\mathcal{V}_h|_T$ satisfies 
\begin{equation}\label{localsolver}
    \begin{aligned}
    &a_T(\sigma_h,\xi_h;\tau_T,\eta_T)+b_T(\tau_T,\eta_T;u_h)=\langle{\bar{\star}\tr\tau_T},\hat{u}_h^\star\rangle^s_{\partial T}+\langle2\textsf{c}\hat{\xi}_h^\star,\tr\star\eta_T\rangle_{\partial T},\\
    &b_T(\sigma_h,\xi_h;v_T)+c_T(u_h,v_T)=-(f,v_T)_T+\langle{\bar{\star}\tr v_T},\hat{\xi}_h^\star\rangle^s_{\partial T}+\langle2\textsf{a}\hat{u}_h^\star,\tr\star v_T\rangle_{\partial T},
\end{aligned}
\end{equation}
for all $(\tau_T,\eta_T,v_T)\in\mathcal{V}^-_h|_T\times\mathcal{V}^+_h|_T\times\mathcal{V}_h|_T$. 

It is noted that \eqref{localsolver} is
a \emph{continuous} saddle point system on $T$ and $\textsf{a}<0$, $\textsf{c}>0$. Therefore the well-posedness of \eqref{localsolver} follows from the same argument used in the proof of Theorem \ref{continuousinfsup}. Due to the local problem \eqref{localsolver}, on each  $T\in\mathcal{T}_h$, local XG solutions $\sigma_h|_T$, $\xi_h|_T$, $u_h|_T$ are determined by $\hat{u}_h^\star|_{\partial T}$, $\hat{\xi}_h^\star|_{\partial T}$  and we use the notation
\begin{align*}
    \sigma_h=H_h^-(\hat{u}_h^\star,\hat{\xi}_h^\star),\quad\xi_h =H_h^+(\hat{u}_h^\star,\hat{\xi}_h^\star),\quad u_h=H_h(\hat{u}_h^\star,\hat{\xi}_h^\star)
\end{align*}
to indicate such dependence. Therefore we say $H_h^-: \check{\mathcal{V}}_h^\star\times\check{\mathcal{V}}^{+\star}_h\rightarrow\mathcal{V}_h^-$, $H_h: \check{\mathcal{V}}_h^\star\times\check{\mathcal{V}}^{+\star}_h\rightarrow\mathcal{V}_h$, $H_h^+: \check{\mathcal{V}}_h^\star\times\check{\mathcal{V}}^{+\star}_h\rightarrow\mathcal{V}_h^+$ are \emph{local solvers}. 

\subsection{Global Coupled System}
It remains to derive a reduced global system for the two variables $\hat{u}_h^\star$, $\hat{\xi}_h^\star$. Let the check spaces satisfy
\begin{equation}\label{checkhybrid}
       \tr\star\mathcal{V}_h\subseteq\check{\mathcal{V}}_h^\star,\quad\tr\star\mathcal{V}^+_h\subseteq\check{\mathcal{V}}^{+\star}_h,
\end{equation}
which implies $\hat{u}_h^\star\in\check{\mathcal{V}}_h^\star$, $\hat{\xi}_h^\star\in\check{\mathcal{V}}_h^{+\star}$.

Using \eqref{meanjump} and \eqref{abcd}, we rewrite \eqref{checkd}, \eqref{checkb} as
\begin{subequations}
\begin{align}
 &\ab{2\textsf{a}(\hat{u}_h^\star-\tr\star u_h)}{\hat{v}_h^\star}_{\partial \mathcal{T}_h}-\ab{\bar{\star}\tr\sigma_h}{\hat{v}_h^\star}^s_{\partial \mathcal{T}_h}=0,\quad\forall\hat{v}_h^\star\in\check{\mathcal{V}}_h^\star,\label{globalflux1}\\
    &\ab{2\textsf{c}(\hat{\xi}_h^\star-\tr\star\xi_h)}{\hat{\eta}_h^\star}_{\partial \mathcal{T}_h}-\ab{\bar{\star}\tr u_h}{\hat{\eta}_h^\star}^s_{\partial \mathcal{T}_h}=0,\quad\forall\hat{\eta}_h^\star\in\check{\mathcal{V}}_h^{+\star},\label{globalflux2}
\end{align}
\end{subequations}
which determine the fluxes $\hat{u}_h^\star$, $\hat{\xi}_h^\star$.

Given $\hat{v}_h^\star\in\check{\mathcal{V}}_h^\star$, $\hat{\eta}_h^\star\in\check{\mathcal{V}}^{+\star}_h$, for the time being, let
\begin{align*}
    \tau_h=H_h^-(\hat{v}_h^\star,\hat{\eta}_h^\star),\quad\eta_h =H_h^+(\hat{v}_h^\star,\hat{\eta}_h^\star),\quad v_h=H_h(\hat{v}_h^\star,\hat{\eta}_h^\star).
\end{align*}
In view of \eqref{localsolver0} or \eqref{localsolver}, $(\tau_h,\eta_h,v_h)$ satisfies 
\begin{subequations}
    \begin{align}
       &(\tau_h,\sigma_h)_T+(d^-\sigma_h,v_h)_T=\langle{\bar{\star}\tr\sigma_h},\hat{v}_h^\star\rangle^s_{\partial T},\label{ls1}\\
       &(\eta_h,\xi_h)_T+(\xi_h,dv_h)_T+\langle2\textsf{c}\tr\star{\eta}_h,\tr\star\xi_h\rangle_{\partial T}=\langle2\textsf{c}\hat{\eta}_h^\star,\tr\star\xi_h\rangle_{\partial T},\label{ls2}\\
       &(d^-\tau_h,u_h)_T+(\eta_h,du_h)_T+\langle2\textsf{a}\tr\star v_h,\tr\star u_h\rangle_{\partial T}\label{ls3}\\
       &\quad=-(f,u_h)_T+\langle{\bar{\star}\tr u_h},\hat{\eta}_h^\star\rangle^s_{\partial T}+\langle2\textsf{a}\hat{v}_h^\star,\tr\star u_h\rangle_{\partial T}.\nonumber
    \end{align}
\end{subequations}
 Using \eqref{globalflux1}, \eqref{ls1}, \eqref{localsolver3}, we have
\begin{equation*}
    \begin{aligned}
       &\ab{2\textsf{a}(\hat{u}_h^\star-\tr\star u_h)}{\hat{v}_h^\star}_{\partial \mathcal{T}_h}\\
       &=\ab{\bar{\star}\tr\sigma_h}{\hat{v}_h^\star}^s_{\partial \mathcal{T}_h}=(\tau_h,\sigma_h)+(d_h^-\sigma_h,v_h)\\
       &=(\tau_h,\sigma_h)-(\xi_h,dv_h)-\langle2\textsf{a}\tr\star u_h,\tr\star v_h\rangle_{\partial\mathcal{T}_h}\\
       &-(f,v_h)+\langle{\bar{\star}\tr v_h},\hat{\xi}_h^\star\rangle^s_{\partial \mathcal{T}_h}+\langle2\textsf{a}\hat{u}_h^\star,\tr\star v_h\rangle_{\partial\mathcal{T}_h}.
    \end{aligned}
\end{equation*}
Simplifying the above equation yields
\begin{equation}\label{global1}
    \begin{aligned}
       &\ab{2\textsf{a}(\hat{u}_h^\star-\tr\star u_h)}{\hat{v}_h^\star-\tr\star v_h}_{\partial \mathcal{T}_h}\\
       &=(\tau_h,\sigma_h)-(\xi_h,dv_h)-(f,v_h)+\langle{\bar{\star}\tr v_h},\hat{\xi}_h^\star\rangle^s_{\partial T}.
    \end{aligned}
\end{equation}
On the other hand, it follows from \eqref{globalflux2} and \eqref{ls2} that
\begin{equation}\label{global2}
    \begin{aligned}
      &\ab{2\textsf{c}\hat{\xi}_h^\star}{\hat{\eta}_h^\star}_{\partial \mathcal{T}_h}-\ab{\bar{\star}\tr u_h}{\hat{\eta}_h^\star}^s_{\partial \mathcal{T}_h}=\ab{2\textsf{c}\tr\star\xi_h}{\hat{\eta}_h^\star}_{\partial \mathcal{T}_h}\\
      &=(\eta_h,\xi_h)+(\xi_h,d_hv_h)+\langle2\textsf{c}\tr\star{\eta}_h,\tr\star\xi_h\rangle_{\partial \mathcal{T}_h}.
    \end{aligned}
\end{equation}
Summing \eqref{global1} and \eqref{global2} leads to
\begin{equation}\label{HDG0}
    \begin{aligned}
       &\ab{2\textsf{a}\big(\hat{u}_h^\star-\tr\star u_h\big)}{\hat{v}_h^\star-\tr\star v_h}_{\partial \mathcal{T}_h}+\ab{2\textsf{c}\hat{\xi}_h^\star}{\hat{\eta}_h^\star}_{\partial \mathcal{T}_h}-\ab{\bar{\star}\tr u_h}{\hat{\eta}_h^\star}^s_{\partial \mathcal{T}_h}\\
       &=(\tau_h,\sigma_h)-(f,v_h)+\langle{\bar{\star}\tr v_h},\hat{\xi}_h^\star\rangle^s_{\partial T}+(\eta_h,\xi_h)+\langle2\textsf{c}\tr\star{\eta}_h,\tr\star\xi_h\rangle_{\partial\mathcal{T}_h}.
    \end{aligned}
\end{equation}
Therefore, \eqref{HDG0} translates into the hybridized globally coupled problem: Find $\hat{u}_h^\star\in\check{\mathcal{V}}^\star_h$, $\hat{\xi}_h^\star\in\check{\mathcal{V}}^{+\star}_h$ such that
\begin{equation}\label{HDG}
    \begin{aligned}
       &(H_h^-(\hat{u}_h^\star,\hat{\xi}_h^\star),H_h^-(\hat{v}_h^\star,\hat{\eta}_h^\star))+(H_h^+(\hat{u}_h^\star,\hat{\xi}_h^\star),H_h^+(\hat{v}_h^\star,\hat{\eta}_h^\star))\\
       &+\langle2\textsf{c}\tr\star H_h^+(\hat{u}_h^\star,\hat{\xi}_h^\star),\tr\star H_h^+(\hat{v}_h^\star,\hat{\eta}_h^\star)\rangle_{\partial\mathcal{T}_h}-\ab{2\textsf{c}\hat{\xi}_h^\star}{\hat{\eta}_h^\star}_{\partial \mathcal{T}_h}\\
       &-\ab{2\textsf{a}(\hat{u}_h^\star-\tr\star H_h(\hat{u}_h^\star,\hat{\xi}_h^\star))}{\hat{v}_h^\star-\tr\star H_h(\hat{v}_h^\star,\hat{\eta}_h^\star)}_{\partial \mathcal{T}_h}\\
       &+\langle{\bar{\star}\tr H_h(\hat{v}_h^\star,\hat{\eta}_h^\star)},\hat{\xi}_h^\star\rangle^s_{\partial \mathcal{T}_h}+\ab{\bar{\star}\tr H_h(\hat{u}_h^\star,\hat{\xi}_h^\star)}{\hat{\eta}_h^\star}^s_{\partial \mathcal{T}_h}=(f,v_h),
    \end{aligned}
\end{equation}
for all $\hat{v}_h^\star\in\check{\mathcal{V}}^\star_h$, $\hat{\eta}_h^\star\in\check{\mathcal{V}}^{+\star}_h$.

\section{Examples}\label{SectionExamples}
This section is devoted to special cases of the abstract XG method \eqref{XGlocal} in $\mathbb{R}^3$. In doing so, we introduce the well-known spaces
\begin{align*}
  &H^1(\Omega)=\{v\in L^2(\Omega): \nabla v\in [L^2(\Omega)]^3\},\\
  &H(\curl,\Omega)=\{v\in L^2(\Omega): \nabla\times v\in [L^2(\Omega)]^3\},\\
&H(\text{div},\Omega)=\{v\in L^2(\Omega): \nabla\cdot v\in L^2(\Omega)\}.
\end{align*}
Using proxy vector fields, the de Rham complex \eqref{deRham} with $n=3$ is identified as the classical exact sequence in $\mathbb{R}^3$ via the following commuting diagram
\begin{equation}\label{identification}
\begin{CD}
    H\Lambda^0(\Omega)@>{d^0}>>H\Lambda^1(\Omega)@>{d^1}>>H\Lambda^2(\Omega)@>{d^2}>>H\Lambda^3(\Omega)\\
    @VV\cong V  @VV\cong V  @VV\cong V @VV\cong V\\
    H^1(\Omega)@>{\nabla}>>H(\curl,\Omega)@>{\nabla\times}>>H(\text{div},\Omega)@>{\nabla\cdot}>>L^2(\Omega)
    \end{CD}
\end{equation}

The 3d Hodge star operator $\star$ is simply identity mapping. On a 2-dimensional face $E\in\mathcal{E}_h$, the 2d Hodge stars $\bar{\star}: L^2\Lambda^0(E)\rightarrow L^2\Lambda^2(E)$ and $\bar{\star}: L^2\Lambda^2(E)\rightarrow L^2\Lambda^0(E)$ are identity operators while $\bar{\star}: L^2\Lambda^1(E)\rightarrow L^2\Lambda^1(E)$ is the mapping $\bar{\star}v=\nu_E\times v.$ For each $E\in\mathcal{E}_h,$ the trace operator $\tr$ is realized as 
\begin{align*}
    &\tr_Ev_0=v_0|_E,\quad v_0\in H^1(\Omega),\\
    &\tr_Ev_1=v_1\times\nu_E,\quad v_1\in H(\text{curl},\Omega),\\
    &\tr_Ev_2=v_2\cdot\nu_E,\quad v_2\in H(\text{div},\Omega).
\end{align*}

In the discrete level, subspaces $\{V_h^k\}_{k=0}^3$ form the discrete de Rham complex \eqref{discretedeRham}, where $V_h^0$ is the Lagrange element space, $V_h^1$ is the N\'ed\'elec edge element space \cite{Nedelec1980,Nedelec1986}, $V_h^2$ is the face element space \cite{RaviartThomas1977,Nedelec1986}, and $V_h^3$ is the space of discontinuous piecewise polynomials.
We refer readers to \cite{ArnoldFalkWinther2006} for more details.
Let $\mathcal{P}_r(\mathcal{T}_h)=\tilde{\mathcal{P}}_r\Lambda^0(\mathcal{T}_h)$ (resp.~$\mathcal{P}_r(\mathcal{E}_h)=\mathcal{P}_r\Lambda^0(\mathcal{E}_h)$) be the space of discontinuous and piecewise polynomials of degree at most $r$ on $\mathcal{T}_h$ (resp.~$\mathcal{E}_h$). Let $\bm{x}=(x_1,x_2,x_3)^\top$ denote the position vector in $\mathbb{R}^3$, and
\begin{align*}
    &\mathring{\mathcal{P}}_r(\mathcal{E}_h)=\{v_h\in\mathcal{P}_r(\mathcal{E}_h): v_h|_E=0,~\forall E\in\mathcal{E}^\partial_h\},\\
    &\mathcal{P}^{\parallel}_r(\mathcal{E}_h)=\{v_h\in[\mathcal{P}_r(\mathcal{E}_h)]^3: v_h|_E\text{ is parallel to }E,~\forall E\in\mathcal{E}_h\},\\
    &\mathring{\mathcal{P}}^{\parallel}_r(\mathcal{E}_h)=\{v_h\in\mathcal{P}^{\parallel}_r(\mathcal{E}_h): v_h|_E=0,~\forall E\in\mathcal{E}^\partial_h\}.
\end{align*}
In Tables \ref{tableDGI}--\ref{tablecheckII}, we list several possible DG spaces used in \eqref{XGlocal}. It is straightforward to check that spaces in Tables \ref{tableDGI} and \ref{tablecheckI} and spaces in Tables \ref{tableDGII} and \ref{tablecheckII} satisfy \eqref{checkhybrid} and  assumptions in Theorem \ref{discreteinfsup}.
\begin{table}[tbhp]
\caption{DG spaces using complete polynomials}
\label{tableDGI}
\centering
\begin{tabular}{|c|c|c|c|}
\hline
{$k$} &  {$\mathcal{V}_h^-$}
 & {$\mathcal{V}_h$}  
& {$\mathcal{V}_h^+$} \\

\hline
               0    & N/A &   $\mathcal{P}_{r+1}(\mathcal{T}_h)$ & $[\mathcal{P}_r(\mathcal{T}_h)]^3$\\
             1       &   $\mathcal{P}_{r+1}(\mathcal{T}_h)$ &  $[\mathcal{P}_r(\mathcal{T}_h)]^3$ & $[\mathcal{P}_{r-1}(\mathcal{T}_h)]^3$  \\
             2         & $[\mathcal{P}_{r+1}(\mathcal{T}_h)]^3$  & $[\mathcal{P}_r(\mathcal{T}_h)]^3$  & $\mathcal{P}_{r-1}(\mathcal{T}_h)$   \\
               3       & $[\mathcal{P}_{r+1}(\mathcal{T}_h)]^3$ &  $\mathcal{P}_r(\mathcal{T}_h)$ & N/A \\
\hline
\end{tabular}
\end{table}

\begin{table}[tbhp]
\caption{Check spaces for DG spaces in Table \ref{tableDGI}}
\label{tablecheckI}
\centering
\begin{tabular}{|c|c|c|c|c|}
\hline
{$k$} 
 & {$\check{\mathcal{V}}^-_h$}  & {$\check{\mathcal{V}}_h$} & {$\check{\mathcal{V}}^\star_h$} & {$\check{\mathcal{V}}_h^{+\star}$} \\

\hline
               0   & N/A & $\mathcal{P}_{r+1}(\mathcal{E}_h)$ & N/A & $\mathring{\mathcal{P}}_{r+1}(\mathcal{E}_h)$ \\
             1       &  $\mathcal{P}_{r+1}(\mathcal{E}_h)$ &  $\mathcal{P}^\parallel_{r}(\mathcal{E}_h)$ &   $\mathring{\mathcal{P}}_{r+1}(\mathcal{E}_h)$ &   $\mathring{\mathcal{P}}^\parallel_r(\mathcal{E}_h)$ \\
             2         &  $\mathcal{P}^\parallel_{r+1}(\mathcal{E}_h)$ &  $\mathcal{P}_r(\mathcal{E}_h)$ & $\mathring{\mathcal{P}}^\parallel_{r+1}(\mathcal{E}_h)$ &  $\mathring{\mathcal{P}}_r(\mathcal{E}_h)$  \\
               3       &  $\mathcal{P}_{r+1}(\mathcal{E}_h)$ &  N/A & $\mathring{\mathcal{P}}_{r+1}(\mathcal{E}_h)$ & N/A \\
\hline
\end{tabular}
\end{table}

\begin{table}[tbhp]
\caption{DG spaces using incomplete polynomials}
\label{tableDGII}
\centering
\begin{tabular}{|c|c|c|c|}
\hline
{$k$} &  {$\mathcal{V}_h^-$}
 & {$\mathcal{V}_h$}  
& {$\mathcal{V}_h^+$} \\

\hline
               0    & N/A &   $\mathcal{P}_{r+1}(\mathcal{T}_h)$ & $[\mathcal{P}_r(\mathcal{T}_h)]^3\times\bm{x}+\mathcal{P}_r(\mathcal{T}_h)$\\
             1       &   $\mathcal{P}_{r+1}(\mathcal{T}_h)$ &  $[\mathcal{P}_r(\mathcal{T}_h)]^3\times\bm{x}+\mathcal{P}_r(\mathcal{T}_h)$ & $[\mathcal{P}_r(\mathcal{T}_h)]^3\bm{x}+\mathcal{P}_r(\mathcal{T}_h)$  \\
             2         & $[\mathcal{P}_r(\mathcal{T}_h)]^3\times\bm{x}+\mathcal{P}_r(\mathcal{T}_h)$ & $[\mathcal{P}_r(\mathcal{T}_h)]^3\bm{x}+\mathcal{P}_r(\mathcal{T}_h)$ & $\mathcal{P}_r(\mathcal{T}_h)$   \\
               3       & $[\mathcal{P}_r(\mathcal{T}_h)]^3\bm{x}+\mathcal{P}_r(\mathcal{T}_h)$ &  $\mathcal{P}_r(\mathcal{T}_h)$ & N/A \\
\hline
\end{tabular}
\end{table}

\begin{table}[tbhp]
\caption{Check spaces for DG spaces in Table \ref{tableDGII}}
\label{tablecheckII}
\centering
\begin{tabular}{|c|c|c|c|c|}
\hline
{$k$} 
 & {$\check{\mathcal{V}}^-_h$}  & {$\check{\mathcal{V}}_h$} & {$\check{\mathcal{V}}^\star_h$} & {$\check{\mathcal{V}}_h^{+\star}$} \\

\hline
               0   & N/A & $\mathcal{P}_{r+1}(\mathcal{E}_h)$ & N/A & $\mathring{\mathcal{P}}_{r+1}(\mathcal{E}_h)$ \\
             1       &  $\mathcal{P}_{r+1}(\mathcal{E}_h)$ &  $\mathcal{P}^\parallel_{r}(\mathcal{E}_h)$ &   $\mathring{\mathcal{P}}_{r+1}(\mathcal{E}_h)$ &   $\mathring{\mathcal{P}}^\parallel_r(\mathcal{E}_h)$ \\
             2         &  $[\mathcal{P}_r(\mathcal{E}_h)]^3$ &  $\mathcal{P}_r(\mathcal{E}_h)$ & $\mathring{\mathcal{P}}^\parallel_r(\mathcal{E}_h)$ &  $\mathring{\mathcal{P}}_r(\mathcal{E}_h)$  \\
               3       &  $\mathcal{P}_r(\mathcal{E}_h)$ &  N/A & $\mathring{\mathcal{P}}_r(\mathcal{E}_h)$ & N/A \\
\hline
\end{tabular}
\end{table}

\textbf{Hodge Laplacian with $\bm{k}$=0.}~With the identification \eqref{identification} in mind, 
the variational Hodge Laplacian \eqref{mixedHodgeLaplace} with $k=0$ is to find $u\in H^1(\Omega)$ and $\xi\in [L^2(\Omega)]^3$ satisfying
\begin{equation}
\begin{aligned}
(\xi,\eta)+(\nabla u,\eta)&=0,\quad&&\eta\in [L^2(\Omega)]^3,\\
(\xi,\nabla v)&=-(f,v),&& v\in H^1(\Omega).
\end{aligned}
\end{equation}
In fact, it is an augmented primal formulation of the Poisson equation under the homogeneous Neumann boundary condition. To guarantee existence and uniqueness of the solution, it is required that $(u,1)=(f,1)=0$. In this case,
the corresponding XG method \eqref{XGlocal} reduces to $(\xi_h,\check{\xi}_h,u_h,\check{u}_h)\in \CV_h^+\times\check{\CV}_h^{+\star}\times\CV_h\times\check{\CV}_h$, such that
on each $T\in\mathcal{T}_h,$ 
\begin{subequations}\label{discreteHodgeLaplacek0}
    \begin{align}
       &(\xi_h,\eta_h)_T-(\nabla\cdot\eta_h,u_h)_T+\langle\hat{u}_h,\eta_h\cdot\nu\rangle^s_{\partial T}=0,\\
       &(\xi_h,\nabla v_h)_T-\langle{v_h},\hat{\xi}_h^\star\rangle^s_{\partial T}=-(f,v_h)_T,\\
    &\ab{\check{\xi}_h^\star-\textsf{b}\lr{u_h}}{\check{\eta}_h^\star}=0,\\
    &\ab{\check{u}_h-\textsf{c}\lr{\xi_h\cdot\nu}}{\check{v}_h}=0,
  \end{align}
\end{subequations}
for all $(\eta_h,\check{\eta}_h,v_h,\check{v}_h)\in \CV_h^+\times\check{\CV}_h^{+\star}\times\CV_h\times\check{\CV}_h$. For well-posedness, one could impose the global constraint $(u_h,1)=0.$ The scheme \eqref{discreteHodgeLaplacek0} recovers the $H(\text{grad})$ based XG method for Poisson's equation in \cite{HongWuXu2021}.

\textbf{Hodge Laplacian with $\bm{k}$=1.}~The Hodge Laplacian \eqref{mixedHodgeLaplace} with $k=1$ is to find $\sigma\in H^1(\Omega)$, $u\in H(\text{curl},\Omega)$, $\xi\in [L^2(\Omega)]^3$ satisfying
\begin{equation}\label{HodgeLaplacek1}
\begin{aligned}
(\sigma,\tau)+(\nabla\tau,u)&=0,\quad&&\tau\in H^1(\Omega),\\
(\xi,\eta)+(\nabla\times u,\eta)&=0,\quad&&\eta\in [L^2(\Omega)]^3,\\
(\nabla\sigma,v)+(\xi,\nabla\times v)&=-(f,v),&& v\in H(\text{curl},\Omega).
\end{aligned}
\end{equation}
Problem \eqref{HodgeLaplacek1} is the variaional formulation of the vector Laplacian 
\begin{align*}
-\nabla(\nabla\cdot u)+\nabla\times\nabla\times u&=f\text{ in }\Omega,\\
u\cdot\nu=0,\quad (\nabla\times u)\times\nu&=0\text{ on }\partial\Omega.
\end{align*}
The XG method \eqref{XGlocal} with $k=1$, $n=3$ seeks
$(\sigma_h,\check{\sigma}_h,\xi_h,\check{\xi}^\star_h,u_h,\check{u}_h,\check{u}^\star_h)\in \CV_h^-\times\check{\CV}_h^-\times\CV_h^+\times\check{\CV}_h^{+\star}\times\CV_h\times\check{\CV}_h\times\check{\CV}^\star_h$, such that
on each $T\in\mathcal{T}_h,$ 
\begin{equation}\label{discreteHodgeLaplacek1}
    \begin{aligned}
       &(\sigma_h,\tau_h)_T+(\nabla\tau_h,u_h)_T-\langle{\tau_h},\hat{u}_h^\star\rangle^s_{\partial T}=0,\\
       &(\xi_h,\eta_h)_T+(\nabla\times\eta_h,u_h)_T+\langle \nu\times\hat{u}_h,\eta_h\times\nu\rangle^s_{\partial T}=0,\\
       &-(\sigma_h,\nabla\cdot v_h)_T+(\xi_h,\nabla\times v_h)_T+\langle\hat{\sigma}_h, v_h\cdot\nu\rangle^s_{\partial T}\\
       &\quad-\langle\nu\times(v_h\times\nu),\hat{\xi}_h^\star\rangle^s_{\partial T}=-(f,v_h)_T,\\
    &\ab{\check{\sigma}_h-\textsf{a}\lr{u_h\cdot\nu}}{\check{\tau}_h}=0,\\
    &\ab{\check{\xi}_h^\star-\textsf{b}\nu\times\lr{ u_h\times\nu}}{\check{\eta}_h^\star}=0,\\
    &\ab{\nu\times\check{u}_h-\textsf{c}\lr{\xi_h\times\nu}}{\nu\times\check{v}_h}=0,\\
    &\ab{\check{u}_h^\star-\textsf{d}\lr{\sigma_h}}{\check{v}_h^\star}=0,
  \end{aligned}
\end{equation}
for all $(\tau_h,\check{\tau}_h,\eta_h,\check{\eta}^\star_h,v_h,\check{v}_h,\check{v}_h^\star)\in \CV_h^-\times\check{\CV}_h^-\times\CV_h^+\times\check{\CV}_h^{+\star}\times\CV_h\times\check{\CV}_h\times\check{\CV}_h^\star$. Therefore we obtain a new class of DG methods for the vector Laplacian.

\textbf{Hodge Laplacian with $\bm{k}$=2.}~The variational Hodge Laplacian \eqref{mixedHodgeLaplace} with $k=2$ is to find $\sigma\in H(\text{curl},\Omega)$, $u\in H(\text{div},\Omega)$, $\xi\in L^2(\Omega)$  such that
\begin{equation}\label{HodgeLaplacek2}
\begin{aligned}
(\sigma,\tau)+(\nabla\times\tau,u)&=0,\quad&&\tau\in H(\text{curl},\Omega),\\
(\xi,\eta)+(\nabla\cdot u,\eta)&=0,\quad&&\eta\in L^2(\Omega),\\
(\nabla\times\sigma,v)+(\xi,\nabla\cdot v)&=-(f,v),&& v\in H(\text{div},\Omega).
\end{aligned}
\end{equation}
Problem \eqref{HodgeLaplacek2} is also a variaional formulation of the vector Laplacian 
\begin{align*}
\nabla\times\nabla\times u-\nabla(\nabla\cdot u)&=f\text{ in }\Omega,\\
u\times\nu=0,\quad \nabla\cdot u&=0\text{ on }\partial\Omega
\end{align*}
under different boundary conditions.
The corresponding XG method for \eqref{XGlocal} seeks $(\sigma_h,\check{\sigma}_h,\xi_h,\check{\xi}^\star_h,u_h,\check{u}_h,\check{u}^\star_h)\in \CV_h^-\times\check{\CV}_h^-\times\CV_h^+\times\check{\CV}_h^{+\star}\times\CV_h\times\check{\CV}_h\times\check{\CV}^\star_h$, such that
on each $T\in\mathcal{T}_h,$ 
\begin{equation}\label{discreteHodgeLaplacek2}
    \begin{aligned}
       &(\sigma_h,\tau_h)_T+(\nabla\times\tau_h,u_h)_T-\langle{\nu\times(\tau_h}\times\nu),\hat{u}_h^\star\rangle^s_{\partial T}=0,\\
       &(\xi_h,\eta_h)_T-(\nabla\eta_h,u_h)_T+\langle\hat{u}_h,\eta_h\rangle^s_{\partial T}=0,\\
       &(\sigma_h,\nabla\times v_h)_T+(\xi_h,\nabla\cdot v_h)_T+\langle\nu\times\hat{\sigma}_h,v_h\times\nu\rangle^s_{\partial T}\\
       &\quad-\langle{v_h\cdot\nu},\hat{\xi}_h^\star\rangle^s_{\partial T}=-(f,v_h)_T,\\
    &\ab{\nu\times\check{\sigma}_h-\textsf{a}\lr{u_h\times\nu}}{\nu\times\check{\tau}_h}=0,\\
    &\ab{\check{\xi}_h^\star-\textsf{b}\lr{u_h\cdot\nu}}{\check{\eta}_h^\star}=0,\\
    &\ab{\check{u}_h-\textsf{c}\lr{\xi_h}}{\check{v}_h}=0,\\
    &\ab{\check{u}_h^\star-\textsf{d}\nu\times\lr{\sigma_h\times\nu}}{\check{v}_h^\star}=0,
  \end{aligned}
\end{equation}
for all $(\tau_h,\check{\tau}_h,\eta_h,\check{\eta}^\star_h,v_h,\check{v}_h,\check{v}_h^\star)\in \CV_h^-\times\check{\CV}_h^-\times\CV_h^+\times\check{\CV}_h^{+\star}\times\CV_h\times\check{\CV}_h\times\check{\CV}_h^\star$. The special XG method \eqref{discreteHodgeLaplacek2} is another family of DG methods for the vector Laplacian.

\textbf{Hodge Laplacian with $\bm{k}$=3.}~The variational Hodge Laplacian \eqref{mixedHodgeLaplace} with $k=3$ is to find $\sigma\in H(\text{div},\Omega)$, $u\in L^2(\Omega)$ such that
\begin{equation}
\begin{aligned}
(\sigma,\tau)+(\nabla\cdot\tau,u)&=0,\quad&&\tau\in H(\text{div},\Omega),\\
(\nabla\cdot\sigma,v)&=-(f,v),&& v\in L^2(\Omega).
\end{aligned}
\end{equation}
Clearly it is the mixed formulation of Poisson's equation under the homogeneous Dirichlet boundary condition.
The XG method \eqref{XGlocal} translates into: Find $(\sigma_h,\check{\sigma}_h,u_h,\check{u}^\star_h)\in \CV_h^-\times\check{\CV}_h^-\times\CV_h\times\check{\CV}^\star_h$, such that
on each $T\in\mathcal{T}_h,$ 
\begin{subequations}
    \begin{align}
       &(\sigma_h,\tau_h)_T+(\nabla\cdot\tau_h,u_h)_T-\langle{\tau_h\cdot\nu},\hat{u}_h^\star\rangle^s_{\partial T}=0,\\
       &-(\sigma_h,\nabla v_h)_T+\langle\hat{\sigma}_h,v_h\rangle^s_{\partial T}=-(f,v_h)_T,\\
    &\ab{\check{\sigma}_h-\textsf{a}\lr{u_h}}{\check{\tau}_h}=0,\\
    &\ab{\check{u}_h^\star-\textsf{d}\lr{\sigma_h\cdot\nu}}{\check{v}_h^\star}=0,
  \end{align}
\end{subequations}
for all $(\tau_h,\check{\tau}_h,v_h,\check{v}_h^\star)\in \CV_h^-\times\check{\CV}_h^-\times\CV_h\times\check{\CV}_h^\star$. This method coincides with the $H(\text{div})$ based XG method for Poisson's equation in \cite{HongWuXu2021}. 

\section*{Appendix}
In the appendix, we give a proof of Lemma \ref{approximation}. We shall construct $v_h^c\in V_h$, $\sigma_h^c\in V^-_h$ using local averaging in Lemma \ref{approximation}. Similar techniques were used for DG spaces containing Lagrange nodal elements and N\'ed\'elec edge elements, see, e.g., \cite{KarakashianPascal2003,HoustonPerugiaSchtzau2005}.
\begin{proof}[Proof of Lemma \ref{approximation}]
First we consider the case $V_h=\mathcal{P}_r\Lambda^k(\mathcal{T}_h)$. Similar arguments apply to $V_h=\mathcal{P}_r^-\Lambda^k(\mathcal{T}_h)$. Let $\Delta(T)$ denote the set of all subsimplexes of $T\in\mathcal{T}_h$ and  $\Delta(\mathcal{T}_h)=\cup_{T\in\mathcal{T}_h}\Delta(T)$. Given a simplex $g\in\Delta(\mathcal{T}_h),$ we fix a basis  $\{\mu_g^i\}_{i\in I_g}$ of $\mathcal{P}_{r+k-\dim(g)}^-\Lambda^{\dim(g)-k}(g)$ such that $\|\mu_g^i\|_{L^\infty(g)}=1$, where $\text{dim}(g)$ is the dimension of $g$. It has been shown in \cite{ArnoldFalkWinther2006,ArnoldFalkWinther2010} that the degrees of freedom of $V_h$ consist of
\begin{equation}\label{dof}
    |g|^{-1}\int_g(\tr_g w_h)\wedge\mu_g^i,\quad g\in\Delta(\mathcal{T}_h),~i\in I_g
\end{equation}
for $w_h\in V_h,$ where $|g|$ is the $\dim(g)$-measure of $g.$ Let $\{\phi_g^i\}_{g\in\Delta(\mathcal{T}_h), i\in I_g}$ be the dual basis of $V_h$ w.r.t. the degrees of freedom in  \eqref{dof}. It holds that
\begin{equation}\label{phiig}
    \|\phi_g^i\|_T\simeq h_T^{\frac{n}{2}},\quad\|d\phi_g^i\|_T\simeq h_T^{\frac{n}{2}-1}.
\end{equation}Let $\omega_g$ denote the set of elements in $\mathcal{T}_h$ sharing $g$ as a common subsimplex. For any $v_h\in\mathcal{V}_h,$ we define the average of $\tr v_h$ surrounding $g$ as
\begin{equation}
    \bar{v}^g_{h,i}=\frac{1}{\#\omega_g}\sum_{T^\prime\in\omega_g}|g|^{-1}\int_g\tr_g(v_h|_{T^\prime})\wedge\mu_g^i.
\end{equation}
The conforming approximation $v_h^c\in V_h$ is constructed as
\begin{equation}\label{vhc2}
    v_h^c=\sum_{g\in\Delta(\mathcal{T}_h)}\sum_{i\in I_g}\bar{v}^g_{h,i}\phi_g^i.
\end{equation}
On each element $T\in\mathcal{T}_h,$ the function $v_h|_T$ is expressed as
\begin{equation}\label{vh}
    v_h|_T=\sum_{g\in\Delta(\mathcal{T}_h)}\sum_{i\in I_g}\left(|g|^{-1}\int_g\tr_g(v_h|_T)\wedge\mu_g^i\right)\phi_g^i.
\end{equation} 
It then follows from \eqref{phiig}, \eqref{vhc2}, \eqref{vh} that 
\begin{equation}\label{intermediateestimate}
\begin{aligned}
    &h^{-1}_T\|(v_h-v_h^c)\|_T+\|d(v_h-v_h^c)\|_T\\
    &\lesssim h_T^{\frac{n}{2}-1}\sum_{g\subset\Delta(T)}\sum_{i\in I_g}\sum_{T^\prime\in\omega_g}\left||g|^{-1}\int_g\{\tr_g(v_h|_T)-\tr_g(v_h|_{T^\prime})\}\wedge\mu_g^i\right|.
\end{aligned}
\end{equation}
Let $\mathcal{E}_h(g)=\{E\in\mathcal{E}_h: \bar{E}\cap\bar{g}\neq\emptyset\}$. 
If $T$ and $T^\prime\in\omega_g$ share a common face $E$ in $\mathcal{E}_h(g)$, we have 
\begin{equation}\label{trTT}
    \|\tr_g(v_h|_T)-\tr_g(v_h|_{T^\prime})\|_{L^\infty(g)}\lesssim h_E^{-\frac{n}{2}+\frac{1}{2}}\|\lr{\tr v_h}\|_E.
\end{equation}
If $T$ and $T^\prime\in\omega_g$ share some low dimensional simplex $g^\prime$ with $\text{dim}(g)<\text{dim}(g^\prime)<n-1$, one could find a chain of pairs of adjacent elements in $\omega_g$ sharing a $\dim(g^\prime)+1$ dimensional simplex in $\omega_g$. Therefore, for an arbitrary $T^\prime\in\omega_g$, there exists a chain of pairs of adjacent elements (possibly repeated) in $\omega_g$ that share a face in $\mathcal{E}_h(g)$. Therefore applying \eqref{trTT} and the triangle inequality, we obtain
\begin{equation}\label{dofdifference}
    \|\tr_g(v_h|_T)-\tr_g(v_h|_{T^\prime})\|_{L^\infty(g)}\lesssim\sum_{E\in\mathcal{E}_h(g)}h_E^{-\frac{n}{2}+\frac{1}{2}}\|\lr{\tr v_h}\|_E,\quad\forall T^\prime\in\mathcal{E}_h(g).
\end{equation}
We also refer to \cite{KarakashianPascal2003} for a detailed argument in nodal DG methods.
Combining \eqref{intermediateestimate} and \eqref{dofdifference}, we obtain
\begin{equation}\label{localT}
    h^{-1}_T\|(v_h-v_h^c)\|_T+\|d(v_h-v_h^c)\|_T\lesssim \sum_{E\in\mathcal{E}_h(g)}h_E^{-\frac{1}{2}}\|\lr{\tr v_h}\|_E.
\end{equation}
Summing \eqref{localT} over all $T\in\mathcal{T}_h$ finishes the proof.

\end{proof}

\bibliography{totalref,DG}

\end{document}